\crefname{section}{Section}{Sections}
\crefname{subsection}{Subsection}{Subsections}
\crefname{figure}{Figure}{Figures}
\crefname{definition}{Definition}{Definitions}
\crefname{theorem}{Theorem}{Theorems}
\crefname{proposition}{Proposition}{Propositions}
\crefname{corollary}{Corollary}{Corollaries}
\crefname{remark}{Remark}{Remarks}
\crefname{lemma}{Lemma}{Lemmata}
\crefname{remarks}{Remarks}{Remarks}
\pgfplotsset{width=7cm,compat=newest} 
\def\hatgap{2pt}
\def\subdown{-2pt}
\newcommand\reallywidehat[2][]{%
\renewcommand\stackalignment{l}%
\stackon[\hatgap]{#2}{%
\stretchto{%
    \scalerel*[\widthof{$#2$}]{\kern-.6pt\bigwedge\kern-.6pt}%
    {\rule[-\textheight/2]{1ex}{\textheight}}
}{0.5ex}
_{\smash{\belowbaseline[\subdown]{\scriptstyle#1}}}%
}}
\renewcommand{\theequation}{\thesection.\arabic{equation}}
\numberwithin{equation}{section}
\newtheoremstyle{mytheoremstyle} 
    {.3cm}                    
    {.3cm}                    
    {\itshape}                   
    {}                           
    {\bfseries}             
    {.}                          
    {.5em}                       
    {}  
\theoremstyle{mytheoremstyle}
\newtheorem{theorem}{Theorem}[section]
\newtheorem{lemma}[theorem]{Lemma}
\newtheorem{definition}[theorem]{Definition}
\newtheorem{remark}[theorem]{Remark}
\def\og{\leavevmode\raise.3ex\hbox{$\scriptscriptstyle\langle\!\langle$~}}
\def\fg{\leavevmode\raise.3ex\hbox{~$\!\scriptscriptstyle\,\rangle\!\rangle$}}
\def\q{\quad}
\def\qq{\qquad}
\def\t{\text}
\def\omn{\om_n }
\def\Rn{R_n }
\def\intO{\displaystyle \int_{\Omega}}
\def\eqref#1{(\ref{#1})}
\def\rn{\mathbb{R}^N}
\def\R{\mathbb{R}}
\def\vp{\varphi}
\def\lm{\lambda}
\def\al{\alpha}
\newcommand{\D}{\Delta}
\newcommand{\si}{\sigma}
\newcommand{\integrale}{\int_{\Omega}}
\def\eps{\varepsilon}
\def\om{\omega}
\def\de{\delta}
\def\dive{\text{ div }}
\def\dys{\displaystyle}
\def\bc{\begin{cases}}
\def\ec{\end{cases}}
\def\be{\begin{equation}}
\def\ee{\end{equation}}
\def\theequation{\arabic{section}.\arabic{equation}}
\newcommand{\ds}{\displaystyle}
\def\te{\theta}
\def\un{u_n}
\def\vn{v_n}
\def\vne{v_n^\eps}
\def\ve{v^\eps}
\def\zn{z_n}
\def\zne{z_n^\eps}
\def\tenu{\te_n(u)}
\def\up{u_+}
\def\vm{v_-}
\def\tenv{\te_n(v)}
\def\kin{\chi_{\{\un>\vn\}}}
\def\ki{\chi_{\{u>v\}}}
\def\N{\nabla}
\def\B{\mathcal{B}}
\def\Vp{\Phi}
\def\b{\beta}
\def\ga{\gamma}
\def\vp{\varphi}
\def\al{\alpha}
\def\om{\omega}
\def\eps{\varepsilon}
\def\ro{\rho}
\def\te{\theta}
\def\un{u_n}
\def\vn{v_n}
\def\vne{v_n^\eps}
\def\ve{v^\eps}
\def\zn{z_n}
\def\zne{z_n^\eps}
\def\tenu{\te_n(u)}
\def\tenv{\te_n(v)}
\def\kin{\chi_{\{\un>\vn\}}}
\def\ki{\chi_{\{u>v\}}}
\newcommand{\car}[1]{\raise1pt\hbox{$\chi$}_{#1}}
\def\cu{c_1}
\def\cd{c_2}
\newcommand{\meas}{\text{meas}}
\def\dive{\text{div }}
\def\og{\leavevmode\raise.3ex\hbox{$\scriptscriptstyle\langle\!\langle$~}}
\def\fg{\leavevmode\raise.3ex\hbox{~$\!\scriptscriptstyle\,\rangle\!\rangle$}}
\definecolor{bor}{cmyk}{0.21,0.93,0.86,0.12}
\definecolor{air}{rgb}{0.178, 0.51, 0.51}
\definecolor{air2}{cmyk}{.82, 0., .67, .11}
\definecolor{range}{cmyk}{0,0.599,1,0.188}
\definecolor{range2}{rgb}{0.599,1,0.188}
\definecolor{vio}{rgb}{.4,0,.4}
\definecolor{pan}{rgb}{.17,.87,.64}
\definecolor{seagreen}{rgb}{0.18, 0.55, 0.34}
\definecolor{dblu}{rgb}{0,.12,.39}
\definecolor{amethyst}{rgb}{0.6, 0.4, 0.8}
\definecolor{blu}{cmyk}{1,.1,.1,0}
\definecolor{paleblue}{rgb}{0.69, 0.93, 0.93}
\definecolor{palerobineggblue}{rgb}{0.59, 0.87, 0.82}
\def\eqref#1{(\ref{#1})}
\def\theequation{\arabic{section}.\arabic{equation}}
\author[T. Leonori]{Tommaso Leonori}
\address[T. Leonori]{Dipartimento di Scienze di Base e Applicate per l'Ingegneria\\  "Sapienza" \ Universit\`a di Roma I\\ Via Antonio Scarpa 10,   00161, Roma, Italy}
\email{tommaso.leonori@sbai.uniroma1.it}
\author[M. Magliocca]{Martina Magliocca}
\address[M. Magliocca]{Dipartimento di Matematica, Universit\`a degli Studi Tor Vergata, Via della Ricerca Scientifica 1, 00133 Rome, Italy. }
 \email{magliocc@mat.uniroma2.it}
\keywords{Uniqueness, Nonlinear Parabolic Equations, Unbounded Solutions, Nonlinear Lower Order Terms} \subjclass[2010]{35B51, 35K55,35K61}
\begin{document}

\title[Comparison results for unbounded solutions for a parabolic Cauchy-Dirichlet problem]{Comparison results for unbounded solutions for a parabolic Cauchy-Dirichlet problem with superlinear gradient growth}

\begin{abstract}
In this paper we deal with uniqueness of solutions to the following problem 
\begin{equation*}
\begin{cases}
\begin{array}{ll}
\ds u_t-\D_p u=H(t,x,\N u)&\ds\text{in}\quad \ Q_T,\\
\ds u (t,x) =0 & \ds\text{on}\quad(0,T)\times \partial \Omega,\\
 \ds u(0,x)=u_0(x) &\ds\text{in }\quad \Omega,
\end{array}
\end{cases}
\end{equation*}
where $Q_T=(0,T)\times \Omega$ is the parabolic cylinder, $\Omega$ is an open subset of $\mathbb{R}^N$, $N\ge2$, $1<p<N$,  and the right hand side  $\ds H(t,x,\xi):(0,T)\times\Omega \times \mathbb{R}^N\to \mathbb{R}$ exhibits a superlinear growth with respect to the gradient term.

\end{abstract}

\maketitle


\section{Introduction}
\setcounter{equation}{0}
\renewcommand{\theequation}{\thesection.\arabic{equation}}
\numberwithin{equation}{section}

The present paper is devoted to the study of the uniqueness and, more in general, to the  comparison principle between sub and supersolutions {{of}} nonlinear parabolic problems with lower order terms that have at most a power growth with respect to the gradient. More specifically, we set $\Omega$   a bounded  open subset of $\rn $, with $N\geq 3$, and   $T>0$. We consider a   Cauchy--Dirichlet problem of the type 
 \begin{equation}\label{model1}
\begin{cases}
\begin{array}{ll}
\ds  u_t- \Delta_p u=h  (t,x,\N u) + f(t,x) &\ds\text{in}\quad Q_T,\\
\ds  u (t,x) =0 &\ds\text{on}\quad(0,T)\times \partial \Omega,\\
\ds  u(0,x)=u_0(x) &\ds\text{in}\quad \Omega,
\end{array}
\end{cases}
\end{equation}
where $Q_T=(0,T)\times \Omega$ denotes the parabolic cylinder, $-\Delta_p  $ is the usual $p-$Laplacian with $p>1$, the functions $u_0$ and $f$ belong to  suitable Lebesgue spaces and 
$h  (t,x, \xi)$ is  a Cartheodory function that has (at most) $q-$growth with respect to the last variable, being $q$ \lq\lq superlinear\rq\rq{ } and  smaller than $p$.

The model equation we have in mind is the following 
 \begin{equation}\label{model}
\begin{cases}
\begin{array}{ll}
 \ds u_t- \Delta_p u=|\N u|^q + f(t,x) &\ds\text{in}\quad Q_T,\\
 \ds u (t,x) =0 &\ds\text{on}\quad(0,T)\times \partial \Omega,\\
 \ds u(0,x)=u_0(x) &\ds\text{in}\quad \Omega,
\end{array}
\end{cases}
\end{equation}
for $1<q<p$,  $f\in L^r(0,T;L^m(\Omega))$, for some $r,m \geq 1$, and $u_0 \in L^{s} (\Omega)$, for some $s\geq1$.

\medskip 

The literature about comparison principles for weak sub/super solutions of \eqref{model} is mainly devoted to cases in which solutions are smooth (say for instance continuous),  the equation is exactly the one in \eqref{model} or the growth of the nonlinear term is \lq\lq sublinear\rq\rq. 

Our aim is to generalize this kind of results to the case of \emph{unbounded solutions} and \emph{non regular data} (both the initial datum and the forcing term), dealing with sub/supersolutions in a suitable class. 

\medskip 

Let us mention that in the elliptic framework such a kind of results have been studied in several papers using different techniques. Let us recall the papers \cite{ABM},  \cite{BaM} \cite{BDMP}, \cite{BMMP}, \cite{BMMP2},   \cite{Me} (and references cited therein) where unbounded solutions for quasilinear equations have been treated. We want also to highlight the results of \cite{Po} (see also\cite{BaP}, \cite{LPR} and \cite{LP})  that have inspired our work,  where the comparison principle among unbounded sub/supersolutions  has been proved, for sub/supersolutions that have a suitable power that belongs to the energy space.

Let us also mention that, as well explained in \cite{ADaP} (see also \cite{ADaP2} for the parabolic counterpart) things change drastically when one deals with the so called {\it natural growth} (i.e. $q=p$ in \eqref{model}), since in this case the right class in which looking for uniqueness involves a suitable exponential of the solution (one can convince   himself just by performing the Hopf-Cole transformation to the equation in \eqref{model}). 

\medskip 

The literature is much poorer in the parabolic case, especially when unbounded solutions are considered. 
Let us mention the results in 
\cite{Feo},\cite{DNFG} where nonlinear problem of the type \eqref{model1} are considered where $h(t,x,\xi)$ has a sublinear (in the sense of the $p$-Laplacian type operators, see \cite{Ma} for more details about such a threshold) growth with respect to the last variable. 

\medskip 

In order to prove the comparison principle (that has the uniqueness has byproduct), several techniques have been developed. Let us mention, among the others, the results that have been proved by using the monotone rearrangement 
technique (see for instance \cite{BDMP}    and references cited therein) and by means of viscosity solutions (see for instance \cite{CIL}, \cite{BDL} and references cited therein). 

Our choice, that has been mainly inspired by \cite{Po}, uses both an argument via linearization and a method that exploits a sort of    convexity of the hamiltonian term with respect to the gradient. These two approaches are, in some sense, complementary since {{the first} one (the linearization) works in the case $1<p\leq 2$ while the {{second one (the \lq\lq\,convex\rq\rq one)} deals with $p\geq2$. Of course, the only case in which both of them are in force is when $p=2$.

\medskip 

Since we want to deal with unbounded solutions and irregular data, the way of defining properly the sub/supersolutions is through the renormalized formulation (see \cite{BlM}, \cite{pe}, \cite{BlP} and \cite{PPP}). 

The renormalized formulation, that is the most natural one in this framework, is helpful in order to face the first difficulty of our problem, that is the unboundedness of the sub/supersolutions. Indeed we can  decompose   the sub/supersolutions into their bounded part plus a reminder that can be estimated, using the uniqueness class  we are working in. 

\medskip 
According with the results in the stationary case, we prove that the uniqueness class (i.e. the class of functions for which we can prove the comparison principle, and uniqueness as a byproduct) is the set of functions whose a  suitable power $\gamma$ (that depends only on $q$, $p$ and $N$) belongs to the energy space. Let us recall (see \cite{Ma}) that such a class is also the right one in order to have existence of solutions. 

Even more, we show, through a counterexample, that at least for $p=2$, the class of uniqueness is the right one, adapting an argument of \cite{BASW1}--\cite{BASW2} to our case. 

\medskip 

We first consider the case with  $1<p\leq 2$, and we look for an inequality solved by the difference between the bounded parts of the sub and supersolutions, using the linearization of the lower order term. Let us recall that this is the typical approach for   singular (i.e. $p\leq 2$) operators, that has extensively used in several previous papers (see for instance \cite{Feo} and \cite{ABM} and references cited therein).  In this case we are allowed to deal with general Leray-Lions operators, even if, due to a lack of regularity of the the sub/supersolutions,   we cannot cover all the superlinear and subnatural  growths.

\medskip

The second part of the paper is devoted to  the case  $p\geq 2$ that  is, in some way,  more complicated, due to the degenerate nature of the operator. 
In fact, we need to straight the hypotheses on the differential operator considering a perturbation (through a  matrix with bounded coefficients) of the standard p-Laplacian. 

 Here the idea is to perturb  the difference between the bounded parts of the sub and supersolutions
 and to  exploit the convexity of the lower order term with respect to the gradient (at least in the case $p=2$, otherwise the general hypothesis is more involved).

\medskip

The plan of the paper is the following: in Section 2 we collect all the statement of our results, while Section 3 is devoted to some  technical results. The proofs of the main results are set in Section 4, if $1<p\leq 2$ and in Section 5 if $p\geq 2$. 

Finally in the Appendix  there is an example that shows that the uniqueness class is the right one, at least for $p=2$ and $1<q<2$.

\section{Assumptions and statements of the results}
\setcounter{equation}{0}
\renewcommand{\theequation}{\thesection.\arabic{equation}}
\numberwithin{equation}{section}

As already explained in the Introduction, we deal with the following Cauchy-Dirichlet problem:
\begin{equation}\label{Dp} 
\begin{cases}
\begin{array}{ll}
\ds  u_t-\dive a(t,x,\N u)=H(t,x,\N u)&\ds\text{in}\quad Q_T,\\
 \ds u (t,x) =0 &\ds\text{on}\quad(0,T)\times \partial \Omega,\\
\ds  u(0,x)=u_0(x) &\ds\text{in}\quad \Omega, 
\end{array}
\end{cases}
\end{equation}
with   $u_0 \in L^{1}(\Omega)$. 

\medskip 

The main assumptions on the functions involved in \eqref{Dp} are the following: the vector valued function $a(t,x,\xi):(0,T)\times \Omega\times\R^N\to \R^N$ is a Carath\'{e}odory function such that
\begin{equation}\label{a2}  
\exists \alpha>0:\quad\left(a(t,x,\xi)-a(t,x,\eta)\right)\cdot (\xi-\eta)\ge
\alpha(|\xi|^2+|\eta|^2)^{\frac{p-2}{2}}|\xi-\eta|^2,  
\end{equation}
\begin{equation}\label{a3} 
\exists \beta>0:\quad |a(t,x,\xi)|\le \beta\left[\ell (t,x)+ |\xi|^{p-1}\right],\quad\ell\in L^{p'}(Q_T)\,,
\end{equation}
\begin{equation}\label{a4}   
a(t,x,0)=0\,.
\end{equation}
for a.e. $ (t,x)\in Q_T$, $\forall \eta,  \xi\in \rn$, with $1<p<N$. 

As far as the lower order term is concerned, we suppose that $H(t,x, \xi):(0,T)\times \Omega\times\R^N\to \R^N$ is  a Carath\'{e}odory function that  satisfies 
the following \emph{growth condition}:
\begin{equation}\label{H1}
\begin{array}{c}
\ds
\exists c_1>0:\quad|H(t,x,\xi)|\le c_1|\xi|^q+f\quad \mbox{with} \quad  \max\left\{\frac{p}2, \frac{p(N+1)-N}{N+2}\right\} < q<p \,,
\end{array}
\end{equation}
for a.e. $ (t,x)\in Q_T$, $\forall  \xi\in \rn$
 and with 
$f=f(t,x)$ belonging to  some Lebesgue space.

\medskip

\medskip

First of all we need to determine the meaning of sub/supersolutions we want to deal with. Since we are interested in possibly irregular data and, in general, in unbounded solutions, the most natural way to mean sub/supersolutions is trough the renormalized formulation.  
In order to introduce such an issue, we {first need} to  define a natural space where such sub/supersolutions are defined: taking inspiration from  \cite{BBGGPV},  we  set  
\[
\mathcal{T}^{1,p}_0  (Q_T)= \left\{ u:Q_T\to \mathbb{R}\,\, \t{ a.e. finite} : \,\,   {T}_k  (u) \in  L^p \big(0,T;W^{1,p}_0 (\Omega)\big) \right\}, \q \t{ for}\,\,p\ge 1,
\] 
where    $T_k (s)= \max\{-k,\min\{k,s \}\}$, for $k\geq 0$ and $s\in \R$. 

\medskip

Now we are ready to define the renormalized sub/super solutions to \eqref{Dp}. 

\medskip

\begin{definition}\label{defrin2}
We say that a function $u\in \mathcal{T}^{1,p}_0 (Q_T) $ is a renormalized subsolution (respectively a supersolution) of \eqref{Dp} if
\begin{equation*}
 H(t,x,\N u)\in L^1(Q_T),\quad u\in C([0,T];L^1(\Omega))
\end{equation*}
and  it satisfies: 
\begin{equation}\label{renf}
\begin{array}{c}
\ds
-\int_\Omega S(u_0)\varphi(0)\,dx+\iint_{Q_T} \left[-S(u)\varphi_t+ S'(u)a(t,x,\N u)\cdot \N\vp+S''(u)a(t,x,\N u)\cdot \N u\,\vp\right]\,dx\,dt\\
[4mm]\ds
\le(\ge) \iint_{Q_T} H(t,x,\N u)S'(u)\varphi\,dx\,dt,
\end{array}
\end{equation}
with 
\begin{equation*}
u(t,x)\bigl|_{t=0} \leq(\ge) u_0(x)\qquad\text{in}\,\,L^1(\Omega)\,,  
\end{equation*}
for every $S\in W^{2,\infty}(\mathbb{R})$ such that $S'(\cdot)$ is nonnegative, compactly supported and for every 
\[
0\le\vp\in L^{\infty}(Q_T))\cap L^p(0,T;W_0^{1,p}(\Omega)),\,\, \vp_t\in L^1(Q_T)+L^{p'}(0,T;W^{-1,p'}(\Omega)),\,\,  \vp(T,x)=0.
\]
\end{definition}

Some remarks about the above definition are in order to be given.

\begin{remark}
 Let us observe that usually the renormalized formulation is   equipped with an additional condition about the asymptotic behavior of the energy, i.e. 
 \begin{equation}\label{ET}
\lim_{n\to\infty}\frac{1}{n}\iint_{\{n<|u|<2n\}}|\N u|^p\,dx\,dt=\lim_{n\to\infty}\frac{1}{n}\iint_{\{n<|v|<2n\}}|\N v|^p\,dx\,dt=0\,.
\end{equation}
Such a  condition is required in order to guarantee  that renormalized solution are, in fact, distributional ones. 
In our case we do not have to ask, in general,   \eqref{ET} to hold  since it is a consequence of the class of uniqueness that we consider (see \cref{lemgrad}). More specifically, we have to impose such a condition only in the case in which we deal with $L^1$-data and with \lq\lq low\rq\rq{ } values of $q$. 
\end{remark}

\begin{remark}
\begin{itemize} 
\item[i)] Note  that a subsolution (a supersolution) on $Q_T$ turns out to be a subsolution (a supersolution) on $Q_t$ for any $0<t\le T$. Thus,   with an abuse of notation,   we   refer to \cref{defrin2} even if we take into account \eqref{renf} evaluated over $Q_t$, with $0<t\le T$.

\item[ii)]  For renormalized solutions of an equation of the type \eqref{model}, the regularity $u\in C([0,T];L^1(\Omega))$ is deduced directly by the  renormalized formulation,  via a trace result (see \cite{P1}). However, since we are dealing with sub/supersolutions, we need to add it to the definition. 
\end{itemize}
\end{remark}

\subsection{Assumptions for $p=2$}
As already announced in the Introduction, for problem \eqref{Dp} with $p=2$ we can use both the approach by linearization and the one by \lq\lq convexity\rq\rq. 

\medskip 

The first approach we want to deal with is the one by linearization. Hence we assume that 
$a(t,x,\xi):(0,T)\times \Omega\times\R^N\to \R^N$ satisfies \eqref{a2}--\eqref{a4}, that in this particular case read as:
\begin{equation}\label{linea23}  
\exists \alpha>0:\quad\left(a(t,x,\xi)-a(t,x,\eta)\right)\cdot (\xi-\eta)\ge\alpha |\xi-\eta|^2
\end{equation}
\begin{equation}\label{linea21}
\exists \b>0:\quad |a(t,x,\xi)|\le \beta\left[\ell (t,x)+ |\xi|\right]\quad\text{for}\quad\ell\in L^{2}(Q_T),
\end{equation}
\begin{equation}\label{linea22} 
a(t,x,0)=0,
\end{equation}
a.e. $(t,x)\in Q_T$, for all $\xi, \eta \in \mathbb{R}^N$.

Moreover we {{assume}} the 
growth assumption
\begin{equation}\label{H111}
\exists \cu>0:\q
|H(t,x,\xi)|\le \cu |\xi|^q+f(t,x)\quad \mbox{for } \quad 1 \leq  q<2, 
\end{equation}
a.e. in  $(t,x)\in Q_T,\,\, \forall \xi\in \R^N$. In addition we suppose    the following locally (weighted) Lipschitz condition
\begin{equation}\label{Hprime}
\begin{array}{c}
\ds
\exists \cd>0:\quad|H(t,x,\xi)-H(t,x,\eta)|\le \cd|\xi-\eta|\left[g(t,x)+|\xi|^{q-1}+|\eta|^{q-1}   \right]
\end{array}
\end{equation}
is in force,   for some function $g(t,x)$ belonging to a suitable Lebesgue space we specify later. 

\smallskip

Before stating our  comparison results, we need to introduce the class of uniqueness. 
As for the elliptic case (see \cite{Po,BaP} and also \cite{GMP1}--\cite{GMP2}), the right framework is the set of sub/supersolutions $u,v$ whose power $\gamma=\ga(q)$ belongs to the energy space for a  suitable choice of $\gamma$. 
Moreover we consider the initial  data $u_0,v_0$ belonging to $L^\sigma (\Omega) $ for some $\sigma \geq 1$. More precisely, we consider sub/supersolutions satisfying
\begin{equation}\label{csi}
u,\, v\in C([0,T];L^{\si}(\Omega))\qquad
{\text{with}} \quad  \sigma= \frac{N\big(q-1\big)}{2-q}
\end{equation}
and
\begin{equation}\label{potc}
\begin{array}{c}
\ds
(1+|u|)^{\frac{\si}{2}-1}u,\,(1+|v|)^{\frac{\si}{2}-1}v\in L^2(0,T;H^{1}_0(\Omega)).
\end{array}
\end{equation}

Such a class of uniqueness makes sense whenever $1\leq \sigma$, i.e. if $q\geq 2-\frac{N}{N+1}$. 

\begin{remark}
One can convince himself that the uniqueness class is the right one just by constructing a counterexample of a problem of the type  \eqref{Dp} that admits (at least) two solutions, whose just one belongs to the right class. The construction of such a pair of solutions is a bit involved and we left it to the \cref{app}. 

\end{remark}

The assumptions about the data are strictly related to the value of the superlinearity $q$. For this reason, we split the superlinear growth   of the gradient term into two   subintervals for which, in turn, we require two different compatibility conditions on the data and two different class of uniqueness. 

\smallskip

We first consider  superlinear rates belonging to the range 
\begin{equation}\label{q1prime}
\begin{array}{c}
\ds
2-\frac{N}{N+1}< q\le 2-\frac{N}{N+2}
\end{array}
\end{equation}
that correspond to the case $1<\sigma\leq 2$, and that allows us to consider  $f(t,x)\in L^r(0,T;L^m(\Omega))$ in \eqref{H111} that verifies 
\begin{equation}\label{F11}
m\ne \infty,\,r\ne\infty\quad\text{s.t.}\quad\frac{N\sigma}{m}+\frac{2\sigma}{r}\le N+2\sigma\quad
\end{equation}
while     $g(t,x)\in L^d(Q_T)$ in \eqref{Hprime} satisfies 
\begin{equation}\label{G1}
g\in L^{d}(Q_T)\quad\text{with}\quad d= N+2.
\end{equation}

\medskip 

Our first result is the following.

\begin{theorem} \label{thm25}
Assume that $a(t,x,\xi)$ satisfies \eqref{linea23}--\eqref{linea22}, $H(t,x,\xi)$ satisfies  \eqref{H111} \eqref{Hprime}     and   that \eqref{q1prime}--\eqref{G1} hold true. Let $u$ and $v$ be a renormalized subsolution and a supersolution of \eqref{Dp}, respectively,  satisfying \eqref{csi}, \eqref{potc} and let $u_0,\,v_0\in L^\si(\Omega)$  such that  $u_0\le v_0$. Then  $u\le v$ in ${Q_T}$.
\end{theorem}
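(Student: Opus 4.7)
The plan is to follow the linearization strategy for $p=2$ mentioned in the introduction, in the spirit of \cite{Po}. For each $n\in\mathbb N$ I would fix a cut-off function $S_n\in W^{2,\infty}(\R)$ with $S_n'\ge 0$, $S_n'\equiv 1$ on $[-n,n]$, and $\mathrm{supp}(S_n')\subseteq [-2n,2n]$, and write the renormalized inequality \eqref{renf} for the subsolution $u$ and for the supersolution $v$ with $S=S_n$. Subtracting the two and testing (after a Steklov-type regularization in time) with
\[
\vp = T_\de\bigl((S_n(u)-S_n(v))^+\bigr)\,\chi_{[0,\tau]}(t),
\]
which is admissible in \cref{defrin2} since it is nonnegative, bounded, lies in $L^2(0,T;H^1_0(\Omega))$ and vanishes at $t=T$, produces the integral inequality from which the argument starts.

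On the \emph{coincidence set} $\{|u|\le n,\,|v|\le n\}$ one has $S_n'(u)=S_n'(v)=1$, so the locally Lipschitz assumption \eqref{Hprime} linearizes the Hamiltonian difference as $H(t,x,\N u)-H(t,x,\N v)=b(t,x)\cdot \N(u-v)$ with $|b|\le \cd\bigl[g+|\N u|^{q-1}+|\N v|^{q-1}\bigr]$. The coercivity \eqref{linea23} produces $\al|\N(u-v)|^2\chi_{\{u>v\}}$ on the left-hand side, and Young's inequality absorbs half of it against $|b\cdot\N(u-v)|$, leaving on the right a principal term bounded by $\iint_{Q_\tau}|b|^2\,T_\de\bigl((S_nu-S_nv)^+\bigr)\,dx\,dt$, plus renormalization remainders. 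Passing to the limits $n\to\infty$ and then $\de\to 0$ is expected to yield a Gronwall-type differential inequality for the $L^\si(\Omega)$-norm of $(u-v)^+(\tau)$ which, together with $u_0\le v_0$, forces $u\le v$ in $Q_T$.

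Two kinds of error terms must then be handled, and this constitutes the technical heart of the proof. The renormalization remainders, namely the terms with $S_n''$ supported in $\{n<|u|<2n\}\cup\{n<|v|<2n\}$ together with the contributions coming from $\{|u|>n\}\cup\{|v|>n\}$, vanish as $n\to\infty$ because \eqref{potc} implies the asymptotic energy estimate \eqref{ET} via \cref{lemgrad}, combined with the continuity in $L^\si(\Omega)$ from \eqref{csi}. The quadratic term $|b|^2$ splits into $g^2$, integrable thanks to $d=N+2$ in \eqref{G1}, and into $|\N u|^{2(q-1)},|\N v|^{2(q-1)}$, which are estimated by interpolating between the $L^\infty(0,T;L^\si(\Omega))$ control from \eqref{csi} and the weighted energy bound $(1+|u|)^{\si/2-1}|\N u|\in L^2(Q_T)$ encoded by \eqref{potc}. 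The specific choice $\si=N(q-1)/(2-q)$, the upper bound on $q$ in \eqref{q1prime}, and the balance condition \eqref{F11} on $(r,m)$ are exactly calibrated so that, after applying a parabolic Sobolev embedding to $(S_nu-S_nv)^+$ (which is pointwise bounded by $\de$ on the support of $T_\de$), the bad integral becomes $o(1)$ as $\de\to 0$. The main obstacle I foresee is precisely this absorption step: the superlinear weight $|\N u|^{q-1}+|\N v|^{q-1}$ in $b$ forces the full strength of the uniqueness class \eqref{csi}--\eqref{potc} to be exploited, and any slackness in the interpolation between \eqref{csi} and \eqref{potc} would leave a non-integrable remainder preventing the Gronwall closure.
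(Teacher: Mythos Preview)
Your overall architecture (renormalize with a smooth truncation $S_n$, subtract, linearize the Hamiltonian via \eqref{Hprime}, and kill the remainders using \eqref{potc} through \cref{lemgrad}) matches the paper. The gap is in the choice of test function and in how you plan to close.

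Testing with $\vp=T_\delta\bigl((S_n u-S_n v)^+\bigr)$ does not work. The diffusion term only gives $\alpha\iint|\nabla w|^2\,\chi_{\{0<w<\delta\}}$ with $w=(S_nu-S_nv)^+$, whereas the Hamiltonian term contributes $\iint b\cdot\nabla w\,T_\delta(w)$ on the whole set $\{w>0\}$. On $\{w>\delta\}$ you have $T_\delta(w)=\delta$ but no coercivity from the left, so the Young absorption you describe cannot be carried out there; what remains is $\delta\iint_{\{w>\delta\}}|b|\,|\nabla w|$, which you have not controlled. Moreover, even if that term were handled, $\int_\Omega\int_0^{w(\tau)}T_\delta(s)\,ds\,dx$ can at best yield $\|(u-v)^+(\tau)\|_{L^1(\Omega)}$ after dividing by $\delta$ and letting $\delta\to0$, never the $L^\sigma$-norm you invoke; so no Gronwall inequality in $L^\sigma$ can emerge from this test.

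The paper's test function is instead
\[
\vp(z_n)=\bigl(G_k(z_n)+\mu\bigr)^{\sigma-1}-\mu^{\sigma-1},\qquad z_n=S_n(u)-S_n(v),
\]
which is precisely tuned to the class \eqref{csi}--\eqref{potc}: the time derivative produces $\int_\Omega G_k(z_n)^\sigma$, and the diffusion produces the weighted energy $\iint |\nabla G_k(z_n)|^2(G_k(z_n)+\mu)^{\sigma-2}$, coercive on all of $\{z_n>k\}$. The Hamiltonian is then estimated not by a Young absorption but by a H\"older inequality (with exponents dictated by $d=N+2$ and the Gagliardo--Nirenberg exponent $2(N+\sigma)/N$), yielding a self-referential inequality that is closed via the level-set lemma \cref{lemell1} rather than Gronwall. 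The power $\sigma-1$ in the test function is the missing idea; without it the uniqueness class \eqref{potc} is never actually used on the left-hand side.
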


\begin{remark}
As far as the limit  the case $q=2-\frac{N}{N-1}$ is concerned, we observe that the result of \cref{thm25} still holds true assuming the data 
\[
u_0\in L^{1+\om}(\Omega),\, f\in L^{1+\om}(Q_T), \qquad \forall \omega>0, 
\]
for sub/supersolutions $u,v$ that belong to the class 
\begin{equation*} 
\begin{array}{c}
\ds
(1+|u|)^{\frac{\om-1}{2}}u,\,(1+|v|)^{\frac{\om-1}{2}}v\in L^2(0,T;H^{1}_0(\Omega)).
\end{array}
\end{equation*}
The proof follows as the one of \cref{thm25} with minor changes, so we omit it. 
\end{remark}

%

\medskip

Secondly  consider the range given by 
\begin{equation}\label{q2prime}
1\leq q< 2-\frac{N}{N+1}
\end{equation}
that correspond to $\sigma<1$, and we 
require that the functions $f$ and $g$ satisfy
\begin{equation}\label{F22}
f\in L^1(Q_T)
\end{equation}
and
\begin{equation}\label{G2}
g\in L^d(Q_T)\quad\text{with}\quad d>N+2.
\end{equation}

\medskip 

Thus our result in this framework is the following.

\begin{theorem}  \label{thm26}
Assume that $a(t,x,\xi)$ satisfies \eqref{linea23}--\eqref{linea22}, $H(t,x,\xi)$ satisfies  \eqref{H111} \eqref{Hprime}     and   that \eqref{q2prime}--\eqref{G2} hold true. Let $u$ and $v$ be a renormalized subsolution and a supersolution of \eqref{Dp}, respectively,  satisfying \eqref{ET}   and let $u_0,\,v_0\in L^1(\Omega)$  such that  $u_0\le v_0$. Then  $u\le v$ in ${Q_T}$.
\end{theorem}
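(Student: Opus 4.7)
The plan is to adapt the linearization argument proving \cref{thm25}, replacing the $\sigma$-energy control \eqref{potc} by the asymptotic decay \eqref{ET}. The crucial integrability input is that $1 \le q < 2 - N/(N+1)$ forces $(q-1)(N+2) < 2 - N/(N+1)$, and since renormalized solutions with $L^1$ data enjoy $|\nabla u|,\,|\nabla v| \in L^s(Q_T)$ for every $s < 2 - N/(N+1)$, the weight $W := g + |\nabla u|^{q-1} + |\nabla v|^{q-1}$ belongs to $L^{N+2}(Q_T)$ by virtue of \eqref{G2}. This is precisely the integrability needed by the parabolic embedding $L^\infty(0,T;L^2) \cap L^2(0,T;H^1_0) \hookrightarrow L^{2(N+2)/N}(Q_T)$ to close a Gronwall-type argument.

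The main step is to test the renormalized inequalities for $u$ (sub) and $v$ (super) with $S = S_n$, a $C^2$ truncation satisfying $S_n(r)=r$ on $[-n,n]$, $S_n'$ supported in $[-2n,2n]$ and $\|S_n''\|_\infty \le 1/n$, against a Steklov regularization of $\varphi = (T_n u - T_n v)_+\, \chi_{[0,\tau]}$. Subtracting and using $u_0 \le v_0$ (so that $(T_n u_0 - T_n v_0)_+ \equiv 0$) leads to an estimate of the form
\begin{equation*}
\tfrac12 \int_\Omega [(T_n u - T_n v)_+]^2(\tau)\, dx + \alpha \iint_{Q_\tau} |\nabla(T_n u - T_n v)_+|^2\, dx\, dt \le \mathcal{E}_n + \mathcal{H}_n,
\end{equation*}
where $\mathcal{E}_n$ collects the renormalization errors carried by $S_n''(u)\, a(\nabla u) \cdot \nabla u$ and its $v$-counterpart; by \eqref{a3} it is bounded by $n^{-1}\iint_{\{n<|u|<2n\}}|\nabla u|^2 + n^{-1}\iint_{\{n<|v|<2n\}}|\nabla v|^2$, hence vanishes as $n \to \infty$ by \eqref{ET}.

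The term $\mathcal{H}_n$ is the delicate one. On the common set $\{|u| \le n,\, |v| \le n\}$ one has $\nabla u = \nabla T_n u$ and $\nabla v = \nabla T_n v$, so the local Lipschitz bound \eqref{Hprime} together with Young's inequality gives
\begin{equation*}
c_2 \iint |\nabla u - \nabla v|\, W\, (T_n u - T_n v)_+ \le \tfrac{\alpha}{2}\iint |\nabla(T_n u - T_n v)_+|^2 + C \iint W^2 [(T_n u - T_n v)_+]^2,
\end{equation*}
and the first summand is absorbed by the coercivity on the LHS. The complementary tail, supported on $\{|u|>n\}\cup\{|v|>n\}$, has to be estimated using $|\nabla u|^q, |\nabla v|^q \in L^1(Q_T)$ (itself a consequence of \eqref{ET}) and the shrinking of the level sets $\{|u|>n\}$; after careful book-keeping this tail also tends to $0$ as $n \to \infty$.

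The hard part is precisely managing these transition regions, where a truncation saturates and no local coercivity is available to absorb the Hamiltonian contribution. Once both tails are controlled, letting $n \to \infty$ yields
\begin{equation*}
\tfrac12 \int_\Omega [(u-v)_+]^2(\tau)\, dx + \tfrac{\alpha}{2}\iint_{Q_\tau} |\nabla(u-v)_+|^2\, dx\, dt \le C \iint_{Q_\tau} W^2 [(u-v)_+]^2\, dx\, dt,
\end{equation*}
and then the H\"older--Sobolev inequality recalled above, combined with $\|W\|_{L^{N+2}(Q_\tau)} \to 0$ as $\tau \to 0$ (absolute continuity of the integral, available thanks to $d > N+2$ in \eqref{G2}), allows a Gronwall absorption on a short initial interval $[0,\tau_0]$; a finite iteration over successive intervals then concludes $(u-v)_+ \equiv 0$ on all of $Q_T$.
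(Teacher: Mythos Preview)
Your argument has two genuine gaps, both stemming from the choice of test function $\varphi=(T_nu-T_nv)_+$.

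First, the renormalization errors do \emph{not} vanish. The term $S_n''(u)\,a(t,x,\nabla u)\cdot\nabla u$ is integrated against $\varphi$, and on $\{n<|u|<2n\}$ one only has $|\varphi|\le 2n$. Hence $\mathcal{E}_n$ is bounded by a constant times $\iint_{\{n<|u|<2n\}}|\nabla u|^2$, \emph{not} by $n^{-1}\iint_{\{n<|u|<2n\}}|\nabla u|^2$ as you write. The hypothesis \eqref{ET} gives only $\iint_{\{n<|u|<2n\}}|\nabla u|^2=o(n)$, which is far from $o(1)$. The same blow-up affects your ``tail'' part of $\mathcal{H}_n$: the growth \eqref{H111} produces $\iint_{\{n<|u|<2n\}}(|\nabla u|^q+|f|)\cdot 2n$, and there is no reason for this to tend to zero.

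Second, and more structurally, the limit inequality you aim for,
\[
\tfrac12\int_\Omega[(u-v)_+]^2(\tau)\,dx+\tfrac{\alpha}{2}\iint_{Q_\tau}|\nabla(u-v)_+|^2\le C\iint_{Q_\tau}W^2[(u-v)_+]^2,
\]
presupposes $(u-v)_+\in L^\infty(0,T;L^2(\Omega))\cap L^2(0,T;H^1_0(\Omega))$. Under \eqref{q2prime}--\eqref{F22} the sub/supersolutions are only in $C([0,T];L^1(\Omega))$; this is precisely why the case is separated from \cref{thm25}. Both sides of your inequality may be $+\infty$, and the Gagliardo--Nirenberg/Gronwall step cannot even be launched. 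A related inaccuracy: the claim $|\nabla u|,|\nabla v|\in L^s(Q_T)$ for all $s<2-\frac{N}{N+1}$ is not available for sub/supersolutions; \cref{lemmaq} only controls $|\nabla u_+|$ and $|\nabla v_-|$. The weight $W$ lies in $L^d$ only after multiplication by $\chi_{\{u>v\}}$, via the pointwise decomposition $|\nabla v|\chi_{\{u>v\}}\le|\nabla v_-|+|\nabla u_+|+|\nabla(u-v)_+|$.

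The paper circumvents both obstacles by testing with a \emph{bounded} function
\[
\varphi(z_n)=\frac{1}{\mu^{\lambda}}-\frac{1}{(G_k(z_n)+\mu)^{\lambda}},\qquad \lambda=\frac{N+1}{N}(2-q)-1.
\]
Boundedness of $\varphi$ makes the renormalization tails genuinely of order $n^{-1}\iint_{\{n<|u|<2n\}}|\nabla u|^2\to0$, and the primitive $\Phi_k(z)\ge c\mu^{-\lambda}G_k(z)+c$ produces an $L^1$-type energy compatible with $u,v\in C([0,T];L^1)$. The closure then proceeds through a weighted gradient estimate (\cref{leml1}) and a level-set absorption (\cref{lemell1}), not through a Gronwall argument in $L^2$.
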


Let us observe that, in fact, our results do not cover all the interval $1\leq q<2$. This is due to a lack of regularity of the sub/super solutions (see Remark \ref{lost2} below).

\medskip

The second approach to the comparison principle deals with a trick that uses the convexity of the lower order order term. Such a method is not as robust as the linearization one, so we need to strength the hypotheses on the differential operator. 

We consider here the following problem 
\begin{equation}\label{p2} 
\begin{cases}
\begin{array}{ll}
u_t-\dive \big(A(t,x)\N u \big)=H(t,x,\nabla u)&\text{in}\,\, Q_T,\\
u (t,x) =0 & \text{on}\,\, (0,T)\times \partial \Omega,\\
u(0,x)=u_0(x) &\text{in}\,\, \Omega\,.
\end{array}
\end{cases}
\end{equation}

We assume that $A:(0,T)\times\Omega\to\mathbb{R}^{N\times N}$ is a bounded and uniformly elliptic matrix with measurable coefficients, i.e. 
\begin{equation}\label{aa1}
\begin{array}{c}\ds
A(t,x)=\big\{a_{ij}(t,x)\big\}_{i,j=1}^N \quad\text{with} \quad a_{ij}\in L^{\infty}(Q_T) \quad\forall  1\le i,j\le N, 
\\[4mm]\ds
\mbox{such that } \q \exists  \al,\,\b:\q 0<\al\le \beta \quad\text{and} \quad \al |\xi|^2\le A(t,x)\xi\cdot\xi\le \beta |\xi|^2\,,
\end{array}
\end{equation}
for almost every $(t,x)\in Q_T$ and for every $\xi\in \R^N$,

\medskip 

As far as  the lower order term is concerned, we suppose that the nonlinear term $H(t,x,\xi)$ satisfies \eqref{H111} with $1<q<2$ and it  can be decomposed as 
\begin{equation}\label{dec}  
H(t,x,\xi)=H_1(t,x,\xi)+H_2(t,x,\xi)
\end{equation}
where, for a.e. $(t,x)\in Q_T$ and for every $\xi$, $\eta$ in $ \mathbb{R}^N$,    the functions $H_1(t,x,\xi)$ and $H_2(t,x,\xi)$ verify: 
\begin{itemize}
\item $ H_1(t,x,\xi):(0,T)\times \Omega\times\R^N\to \R$ is a convex function with respect to the  $\xi$ variable, i.e.
\begin{equation}\label{H1.1}  
\forall \varepsilon \in (0,1) \quad H_1\left(t,x,\varepsilon\xi+(1-\varepsilon)\eta\right)\le \varepsilon H_1(t,x,\xi)+(1-\varepsilon)H_1(t,x,\eta)\quad\forall \xi,\eta \in \mathbb{R}^N\,;
\end{equation}

\item   $H_2(t,x,\xi):(0,T)\times \Omega\times\R^N\to \R$ is a Lipschitz function with respect to $\xi$, namely
\begin{equation}\label{H2.1}  
\exists c_2>0:\quad|H_2(t,x,\xi)-H_2(t,x,\eta)|\le c_2|\xi-\eta| 
\end{equation}
that satisfies the following inequality    for sufficiently small $\eps>0$
\begin{equation}\label{H2.2} 
H_2(t,x,(1-\varepsilon)\xi)-(1-\varepsilon)H_2(t,x,\xi)\le 0 
\end{equation}
for almost every $(t,x)\in Q_T$ and for all $\xi,\,\eta \in \mathbb{R}^N$.
\end{itemize}

\bigskip

As for the approach by linearization, we have two types of results, depending on the regularity of the sub/supersolutions under consideration. 

First we deal with solutions in the class \eqref{csi}--\eqref{potc}: in this case we consider 
lower order terms whose growth with respect to  the gradient is at most a power $q$ in the range 
\begin{equation}\label{q1pprime}
2-\frac{N}{N+1}< q<2\,,
\end{equation}
and  we assume that $f$ in \eqref{H111}  belongs to $ L^r(0,T;L^m(\Omega))$ with  $(m,r)$ such that  \eqref{F11} holds true.

\medskip

Hence we have the following result.

\begin{theorem}\label{comp2}
Assume  that $A(t,x)$ satisfies  \eqref{aa1} and $H(t,x,\xi)$ \eqref{H111} together with \eqref{dec}--\eqref{H2.2}, \eqref{q1pprime} and \eqref{F11},  and let $u$ and $v$ be, respectively, a renormalized subsolution and a supersolution of \eqref{p2} satisfying \eqref{csi}--\eqref{potc}. 
and let $u_0,\,v_0\in L^{\sigma} (\Omega)$ be such that  $u_0\le v_0$. Then  we have that $u\le v$ in ${Q_T}$.
\end{theorem}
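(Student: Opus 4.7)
The plan, adapting the convexity scheme of \cite{Po} to the parabolic renormalized setting, is to establish $u\le v$ by showing that $\bigl((1-\eps)u-v\bigr)^+\to 0$ as $\eps\to 0^+$. The key ingredient is that, after scaling the subsolution inequality by $(1-\eps)$, the pointwise remainders
\[
R_i^\eps(t,x):=(1-\eps)H_i(t,x,\N u)-H_i\bigl(t,x,(1-\eps)\N u\bigr),\qquad i=1,2,
\]
satisfy $|R_i^\eps|\le C\eps\bigl(1+|\N u|^q\bigr)+\eps|H_i(t,x,0)|$: by the convexity hypothesis \eqref{H1.1} on $H_1$ and by \eqref{H2.2} on $H_2$ (which actually makes $R_2^\eps\ge 0$), combined with a first-order Taylor analysis in $\eps$ around $\eps=0$. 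In particular $R_i^\eps\to 0$ in $L^1(Q_T)$ as $\eps\to 0$, because $|\N u|^q\in L^1(Q_T)$ by the class of uniqueness \eqref{csi}--\eqref{potc} combined with Gagliardo--Nirenberg interpolation (using the superlinearity range \eqref{q1pprime}).

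With $\tilde u_n:=(1-\eps)T_n(u)$ and $\hat v_n:=T_n(v)$, I then test the difference of the two renormalized inequalities (with a standard sequence $S_n\in W^{2,\infty}(\R)$, $S_n'$ supported in $[-n,n]$ and $S_n\to\mathrm{Id}$) with $\vp_n^\eps:=(\tilde u_n-\hat v_n)^+$. The parabolic left-hand side yields, thanks to \eqref{aa1},
\[
\tfrac12\|\vp_n^\eps(\tau)\|_{L^2(\Omega)}^2+\al\iint_{Q_\tau}|\N\vp_n^\eps|^2\,dx\,dt
\]
(up to an $o(1)$ contribution at $t=0$, harmless because $u_0\le v_0$ and $\tilde u_n(0)-\hat v_n(0)\le \eps|T_n(u_0)|$). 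On the right, inserting the identity $(1-\eps)H_i(\N T_n u)=H_i(\N\tilde u_n)+R_i^\eps$, applying the subgradient inequality for the convex $H_1$ at $\N\tilde u_n$, and invoking the Lipschitz bound \eqref{H2.1} for $H_2$, one obtains on $\{\vp_n^\eps>0\}$
\[
(1-\eps)H(\N T_n u)-H(\N\hat v_n)\le DH_1(\N\tilde u_n)\cdot\N\vp_n^\eps+c_2\,|\N\vp_n^\eps|+R_1^\eps+R_2^\eps.
\]
Absorbing the $|\N\vp_n^\eps|$ contributions into the coercive $\al|\N\vp_n^\eps|^2$ via Young's inequality and using $|DH_1(\xi)|\le C(1+|\xi|^{q-1})$, I arrive at
\[
\|\vp_n^\eps(\tau)\|_{L^2(\Omega)}^2\le C\iint_{Q_\tau}\bigl(1+|\N u|^{2(q-1)}\bigr)(\vp_n^\eps)^2+\iint_{Q_\tau}(R_1^\eps+R_2^\eps)\vp_n^\eps+\mathrm{tail}_n,
\]
where $\mathrm{tail}_n$ originates from the renormalization term $S_n''(u)\,A\N u\cdot\N u$ and vanishes as $n\to\infty$ by virtue of \eqref{potc}.

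Under \eqref{q1pprime} combined with \eqref{F11} and \eqref{csi}--\eqref{potc}, the coefficient $|\N u|^{2(q-1)}$ lies in a parabolic Lebesgue space $L^r(0,T;L^m(\Omega))$ satisfying the Aronson--Serrin integrability $\tfrac{N}{2m}+\tfrac{1}{r}<1$, so Gronwall's lemma reduces the previous inequality to $\|\vp_n^\eps(\tau)\|_{L^2}^2\le C\iint_{Q_\tau}(R_1^\eps+R_2^\eps)\vp_n^\eps+\mathrm{tail}_n$. Sending first $n\to\infty$ (so that $\mathrm{tail}_n\to 0$) and then $\eps\to 0^+$ (using $R_i^\eps\to 0$ in $L^1(Q_T)$ and dominated convergence) yields $\bigl((1-\eps)u-v\bigr)^+\to 0$ in $L^\infty(0,T;L^2(\Omega))$, and therefore $u\le v$ a.e.\ in $Q_T$. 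The main obstacle I foresee is the Aronson--Serrin verification for the drift $|\N u|^{2(q-1)}$: this relies crucially on the exponent $\sigma=N(q-1)/(2-q)$ from \eqref{csi} and on an interpolation between $|u|^{\sigma/2}\in L^2(0,T;H_0^1(\Omega))$ and the $L^\infty(0,T;L^\sigma(\Omega))$ bound on $u$, which together precisely encode why the threshold \eqref{q1pprime} is natural. A secondary delicate point is to ensure that the renormalization tails remain uniformly negligible in $\eps$ as $n\to\infty$, which is delicate because the drift constant in the Gronwall step depends on the profile of $\N u$ on the high-energy set $\{|u|>n\}$.
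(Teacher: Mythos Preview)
Your scheme is, at its core, a \emph{linearization} argument rather than the convexity method: once you invoke the subgradient inequality and bound $|DH_1(\xi)|\le C(1+|\xi|^{q-1})$, you are back to estimating a drift term $b\cdot\nabla\vp_n^\eps$ with $b\sim |\nabla u|^{q-1}$. This is precisely the weighted Lipschitz structure \eqref{Hprime} used in \cref{thm25}, and the paper explains in \cref{lost2} why that route does \emph{not} cover the full range \eqref{q1pprime}. Concretely, the class \eqref{csi}--\eqref{potc} yields (via \cref{lemgrad}) only $|\nabla u|^{q-1}\in L^{N+2}(Q_T)$, which is the \emph{borderline} Aronson--Serrin case $\tfrac{N}{N+2}+\tfrac{2}{N+2}=1$; the strict inequality you need for a Gronwall closure simply does not hold. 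You flag this as ``the main obstacle'', but it is not merely delicate: for $q>2-\tfrac{N}{N+2}$ it is exactly the reason the linearization approach breaks down and a genuinely different argument is required.

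There is a second, independent gap. You work in an $L^2$ framework (testing with $((1-\eps)T_n u - T_n v)^+$ and estimating its $L^2(\Omega)$ norm), but on the subrange $2-\tfrac{N}{N+1}<q<2-\tfrac{N}{N+2}$ one has $\sigma<2$, so $u,v$ are only in $C([0,T];L^\sigma)$. In particular the initial contribution $\|\vp_n^\eps(0)\|_{L^2}^2\le \eps^2\|T_n(u_0)\|_{L^2}^2\lesssim \eps^2 n^{2-\sigma}$ diverges as $n\to\infty$ for fixed $\eps$, and the limiting function $((1-\eps)u-v)^+$ is not in $L^2$; your order of limits $n\to\infty$ first, then $\eps\to0$, cannot be executed.

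The paper's proof exploits convexity in a structurally different way: it multiplies the \emph{supersolution} inequality by $(1-\eps)$, so that on the difference one gets
\[
H_1(t,x,\nabla u_n)-(1-\eps)H_1(t,x,\nabla v_n)\le \eps\,H_1\Bigl(t,x,\tfrac{\nabla(u_n-(1-\eps)v_n)}{\eps}\Bigr)\le c_1\eps^{1-q}|\nabla(u_n-(1-\eps)v_n)|^q+\eps f,
\]
and then sets $z_n^\eps=\eps^{-1}e^{-t}\bigl(u_n-(1-\eps)v_n\bigr)$. The point is that $z_n^\eps$ satisfies an inequality of the \emph{same form} as the original equation (with $|\nabla z_n^\eps|^q$ on the right), so no drift term and no Aronson--Serrin condition appear. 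One then tests with $\int_0^{G_k(z_n^\eps)}(\mu+|w|)^{\sigma-2}\,dw$, tailored to the $L^\sigma$ setting, and closes an a priori estimate of the type used for existence (see \cite{Ma}): this gives a bound on $\|z_n^\eps\|_{L^\infty(L^\sigma)}$ uniform in $\eps$ and $n$, hence $\|(u-(1-\eps)v)^+\|_{L^\sigma}\le C\eps$. This avoids both obstacles above and is why the statement holds on the whole range \eqref{q1pprime}.
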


\medskip

As far as the low values of $q$ are considered, we deal with the same range considered in \eqref{q2prime}
and $L^1$ data.

\medskip

\begin{theorem}\label{teorinL12}
Assume  that $A(t,x)$ satisfies  \eqref{aa1} and $H(t,x,\xi)$ \eqref{H111} together with \eqref{dec}--\eqref{H2.2}, \eqref{q2prime} and \eqref{F22}. Let $u$ and $v$ be a renormalized subsolution and a supersolution of \eqref{p2}, respectively, satisfying \eqref{ET} and let  $u_0,\,v_0\in L^1(\Omega)$ with $u_0\le v_0$. Then  we have that $u\le v$ in ${Q_T}$.
\end{theorem}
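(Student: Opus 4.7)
\emph{Strategy and perturbation.} The proof follows the \emph{convexity} approach of \cref{comp2}, adapted to the $L^1$ framework where the uniqueness class \eqref{csi}--\eqref{potc} degenerates ($\sigma<1$) and is replaced by the decay condition \eqref{ET}. Fix $\eps\in(0,1)$ and set $v_\eps:=(1-\eps)v$. Writing $\nabla v_\eps=(1-\eps)\nabla v+\eps\cdot 0$, the convexity \eqref{H1.1} of $H_1$ yields $H_1(t,x,\nabla v_\eps)\le(1-\eps)H_1(t,x,\nabla v)+\eps H_1(t,x,0)$, while \eqref{H2.2} gives $H_2(t,x,\nabla v_\eps)\le(1-\eps)H_2(t,x,\nabla v)$. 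Multiplying the renormalized supersolution inequality for $v$ by $1-\eps$ and combining, one deduces that $v_\eps$ is a renormalized supersolution of the perturbed problem
\[
(v_\eps)_t-\dive(A\nabla v_\eps)\ge H(t,x,\nabla v_\eps)-\eps\,H_1(t,x,0),\qquad v_\eps(0,x)=(1-\eps)v_0(x),
\]
where $H_1(\cdot,\cdot,0)=H(\cdot,\cdot,0)-H_2(\cdot,\cdot,0)\in L^1(Q_T)$ thanks to \eqref{H111} at $\xi=0$ and \eqref{H2.1}. It therefore suffices to prove $u\le v_\eps$ in $Q_T$ and then let $\eps\to 0^+$.

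\emph{Testing and energy inequality.} Pick cutoffs $S_n\in W^{2,\infty}(\R)$ with $S_n(s)=s$ for $|s|\le n$, $S_n$ constant for $|s|\ge 2n$, $0\le S_n'\le 1$, and test the renormalized formulations with
\[
\varphi_{n,k}:=T_k\bigl((S_n(u)-S_n(v_\eps))_+\bigr),\qquad k>0,
\]
which is bounded, in $L^2(0,T;H^1_0(\Omega))$, and admissible in \cref{defrin2}. Subtracting the renormalized inequalities for $u$ and $v_\eps$, using the ellipticity \eqref{aa1} for the principal part, the tangent-line inequality
\[
H_1(t,x,\nabla u)-H_1(t,x,\nabla v_\eps)\le c_1 q\,|\nabla u|^{q-1}\,|\nabla u-\nabla v_\eps|
\]
(coming from \eqref{H1.1} together with the growth \eqref{H111}), the Lipschitz bound \eqref{H2.1} for $H_2$, and Young's inequality to absorb a fraction of $|\nabla\varphi_{n,k}|^2$ into the left-hand side, we arrive, for a.e. $t\in(0,T)$, at an integrated inequality of the form
\[
\tfrac12\!\int_\Omega\! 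T_k((u-v_\eps)_+)^2(t)\,dx+\tfrac{\alpha}{2}\!\iint_{Q_t}\!|\nabla T_k((u-v_\eps)_+)|^2\le C\!\!\iint_{\{0<u-v_\eps<k\}\cap Q_t}\!\!\bigl(1+|\nabla u|^{2(q-1)}\bigr)+\eps\,k\,\|H_1(\cdot,\cdot,0)\|_{L^1(Q_T)},
\]
once the $S_n''$--remainders supported on $\{n<|u|<2n\}\cup\{n<|v_\eps|<2n\}$ have been sent to $0$ as $n\to\infty$ via \eqref{ET}.

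\emph{Conclusion and main obstacle.} Under \eqref{ET} and $L^1$ data, parabolic Boccardo--Gallou\"et type bounds give $|\nabla u|^r\in L^1(Q_T)$ for every $r<(N+2)/(N+1)$; combined with \eqref{q2prime} one has $2(q-1)<2/(N+1)<(N+2)/(N+1)$, so $|\nabla u|^{2(q-1)}\in L^1(Q_T)$. A Gronwall argument on $t\mapsto\tfrac12\int_\Omega T_k((u-v_\eps)_+)^2(t)\,dx$, whose initial value is controlled by $T_k((u_0-(1-\eps)v_0)_+)\le T_k((\eps\,(v_0)_+))\to 0$ as $\eps\to 0^+$, yields $T_k((u-v_\eps)_+)\equiv 0$; letting $k\to\infty$ and then $\eps\to 0^+$ concludes $u\le v$. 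The principal difficulty lies in the coupling of the convexity trick, which intrinsically operates on the full gradient $\nabla v$, with the renormalization, which is forced by the $L^1$ datum and the unboundedness of $u,v$: verifying that the spurious $S_n''$--terms vanish in the limit $n\to\infty$ is exactly where \eqref{ET} is indispensable, and the sharpness of \eqref{q2prime} reflects precisely the integrability threshold of $|\nabla u|^{2(q-1)}$ in the $L^1$ setting that allows the Young--Gronwall absorption to close.
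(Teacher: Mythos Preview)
Your argument breaks down at the ``Gronwall'' step, and the breakdown is structural rather than technical. After the tangent-line inequality for $H_1$ and Young's inequality, the right-hand side you obtain,
\[
C\iint_{\{0<u-v_\eps<k\}\cap Q_t}\bigl(1+|\N u|^{2(q-1)}\bigr)\,dx\,ds,
\]
is a \emph{fixed} finite quantity (by the Boccardo--Gallou\"et regularity you quote), not an expression dominated by $\int_0^t g(s)\bigl[\int_\Omega T_k((u-v_\eps)_+)^2(s)\,dx\bigr]\,ds$. Gronwall's lemma is therefore inapplicable, and your inequality only says
\[
\tfrac12\int_\Omega T_k((u-v_\eps)_+)^2(t)\,dx\;\le\;Ck^2\||\N u|^{2(q-1)}\|_{L^1(Q_T)}+\eps k\|H_1(\cdot,\cdot,0)\|_{L^1}+\tfrac12\int_\Omega T_k((\eps v_0)_+)^2\,dx,
\]
which does \emph{not} force $T_k((u-v_\eps)_+)\equiv 0$. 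Indeed that conclusion is false for fixed $\eps>0$, since the initial datum $(u_0-(1-\eps)v_0)_+$ is in general strictly positive. Replacing convexity by the tangent-line bound throws away exactly the structure that makes the estimate closable: the right-hand side now depends on $|\N u|$, which you cannot absorb, rather than on the gradient of the difference.

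The paper uses convexity in the opposite, and essential, way: from \eqref{H1.1} one gets $H_1(t,x,\N u_n)-(1-\eps)H_1(t,x,\N v_n)\le \eps H_1\bigl(t,x,\N(u_n-(1-\eps)v_n)/\eps\bigr)$, so after \eqref{H111} the right-hand side is $c_1|\N\zne|^q+\tilde f$ with $\zne=\tfrac{e^{-t}}{\eps}(u_n-(1-\eps)v_n)$. The whole inequality is thus expressed in terms of $\zne$ alone. One then tests with $\vp(\zne)=1-(1+G_k(\zne))^{-\mu}$ for the specific exponent $\mu=\tfrac{N+1}{N}(2-q)-1$, so that Young and the Gagliardo--Nirenberg inequality (\cref{teoGN}) yield a \emph{closed} bound of the form
\[
\int_\Omega \Vp(\zne(t))\,dx+\tfrac{\mu\al}{2}\iint_{Q_t}\frac{|\N G_k(\zne)|^2}{(1+G_k(\zne))^{\mu+1}}\le \bar c\,\|G_k(\zne)\|_{L^\infty(0,T;L^1)}^{\frac{2-q}{N}}\iint_{Q_T}\frac{|\N G_k(\zne)|^2}{(1+G_k(\zne))^{\mu+1}}+\text{small data terms}+\omn.
\]
This is closed not by Gronwall but by a smallness/continuation argument on $t\mapsto\|G_k(\zne(t))\|_{L^1(\Omega)}$, choosing $k$ large so that the data terms are below a threshold and using $u,v\in C([0,T];L^1(\Omega))$ to propagate. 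The outcome is a bound on $\zne$ \emph{uniform in $n$} (not $=0$), which after $n\to\infty$ reads $\int_\Omega(u-(1-\eps)v)\,dx\le C\eps$ and gives $u\le v$ only in the limit $\eps\to 0$. The choice of $\mu$ and the interplay with Gagliardo--Nirenberg are what make \eqref{q2prime} the sharp range; the integrability of $|\N u|^{2(q-1)}$ plays no role in the actual proof.
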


\medskip

\subsection{Assumptions for $1<p< 2$}
Le us now go back to our original problem  \eqref{Dp}  and let us assume that the vector valued function $a(t,x,\xi)$ satisfies \eqref{a2}--\eqref{a4}, with $1<p< 2$. 

As far as the lower order term is concerned, we suppose that it satisfies 
the \emph{growth condition}  \eqref{H1} and 
   we assume that  a suitable weighted Lipschitz assumption with respect to the last variable is in force, i.e. 
\begin{equation}\label{H3}  
\begin{array}{c}
\ds
\exists c_2 >0:\quad|H(t,x,\xi)-H(t,x,\eta)|\le c_2 |\xi-\eta|\left[g+|\xi|^{q-1}+|\eta|^{q-1}   \right]
\end{array}
\end{equation}
for a.e. $(t,x)\in Q_T$, for all $\xi, \eta \in \mathbb{R}^N$ and for some measurable function $g=g(t,x)$ belonging to $ L^d (Q_T)$, for a suitable choice of $d\geq 1$. 
\medskip 

In this setting, we determine two ranges of $q$ each of them giving a different type of result in function of the required class of uniqueness (and the regularity) of the solutions. \\

We start by considering 
\begin{equation}\label{qu}  
\begin{array}{c}
\ds
p-\frac{N}{N+1}< q\le p-\frac{N}{N+2}\quad\text{for}\quad 1+\frac{N}{N+1}<p<2\\
[4mm]\ds
\text{and}
\\
\ds
1\le q<p-\frac{N}{N+2}\quad\text{for}\quad 1+\frac{N}{N+2}\le p< 1+\frac{N}{N+1}
\end{array}
\end{equation}
together with 
\begin{equation}\label{h1} 
g\in L^{d}(Q_T)\quad\text{with}\quad d=  \frac{N(q-(p-1))-p+2q}{q-1}. 
\end{equation}
Moreover we assume that 
\begin{equation}\label{F1}
\begin{array}{c}
\ds
 f\in L^r(0,T;L^m(\Omega))\q\t{with}\q (m,r) \q\t{such that}\\ 
[2mm]\ds
m\ne\infty,\,r\ne\infty,\quad \frac{N\si}{m}+\frac{N(p-2)+p\si}{r}\le N(p-1)+p\si.
\end{array}
\end{equation}

\medskip

When we deal with this range, we  assume the continuity regularity
\begin{equation}\label{cont} 
u,\,v\in C([0,T];L^{\si}(\Omega))\q\t{with}\,\q \si=\frac{N(q-(p-1))}{p-q}
\end{equation}
and we deal with   the uniqueness class
\begin{equation}\label{pot} 
\begin{array}{c}
\ds
(1+|u|)^{\gamma-1}u,\,(1+|v|)^{\gamma-1}v\in L^p(0,T;W^{1,p}_0(\Omega))\q
\mbox{\text{with}} \q \gamma=\frac{\si+p-2}{p}.
\end{array}
\end{equation}
 In this ranges   of values of $q$ we have that $\sigma\in (1,2)$.
 
 \medskip

\begin{theorem}\label{teorin2}
Assume   \eqref{a2}--\eqref{a4},   and    that $H(t,x,\xi)$ satisfies \eqref{H1},  \eqref{H3} with  \eqref{qu}--\eqref{F1}. 
Let $u$ and $v$ be a subsolution and a supersolution of \eqref{Dp}, respectively,  
satisfying \eqref{cont}--\eqref{pot}. Then  $u\le v$ in ${Q_T}$.
\end{theorem}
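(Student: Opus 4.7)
\textbf{Proof plan for \cref{teorin2}.}
I would adapt the parabolic linearization strategy of \cite{Po,Feo} to the singular range $1<p<2$, using the degenerate monotonicity \eqref{a2} together with a weighted Young inequality to handle the loss of ellipticity. For $n\in\mathbb{N}$, write \eqref{renf} with $S=S_n$ a smooth truncation satisfying $S_n'\equiv 1$ on $[-n,n]$ and $\mathrm{supp}(S_n')\subset [-2n,2n]$, both for the subsolution $u$ and for the supersolution $v$, and subtract the two inequalities. As test function take
\[
\vp = T_k\big((S_n(u)-S_n(v))_+\big)\,\chi_{(0,\tau)}(t),
\]
(after a Steklov/Landes-type time regularization to make $\vp_t$ admissible), for $\tau\in(0,T)$ and $k>0$. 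This generates at time $\tau$ the parabolic contribution $\int_\Omega \Theta_k\big((S_n(u)-S_n(v))_+\big)\,dx$ (with $\Theta_k$ the primitive of $T_k$), the elliptic dissipation, a remainder due to the renormalization, and the lower order term.

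\emph{Linearization and weighted absorption.} On $\{S_n(u)>S_n(v)\}$, assumption \eqref{H3} lets me write
$H(t,x,\N u)-H(t,x,\N v)=B(t,x)\cdot\N(u-v)$
with $|B|\le c_2\big[g+|\N u|^{q-1}+|\N v|^{q-1}\big]$, while \eqref{a2} provides the nonnegative dissipation $\alpha(|\N u|^2+|\N v|^2)^{(p-2)/2}|\N(u-v)|^2$. The crucial step is the weighted Young inequality
\[
|B|\,|\N(u-v)|\le \tfrac{\alpha}{2}(|\N u|^2+|\N v|^2)^{\frac{p-2}{2}}|\N(u-v)|^2 + C\,|B|^2(|\N u|^2+|\N v|^2)^{\frac{2-p}{2}},
\]
which absorbs the bad term into the dissipation and leaves the residual $|B|^2(|\N u|^2+|\N v|^2)^{(2-p)/2}$ to estimate.

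\emph{The crux: controlling the residual.} I expect this to be the hardest step. Expanding the residual produces the three contributions $g^2(|\N u|^2+|\N v|^2)^{(2-p)/2}$, $|\N u|^{2q-p}$, and $|\N v|^{2q-p}$. None of these exponents matches $p$ or $q$, and closing them requires the uniqueness class. The energy bound \eqref{pot}, which reads $(1+|u|)^{p(\gamma-1)}|\N u|^p=(1+|u|)^{\sigma-2}|\N u|^p\in L^1(Q_T)$, combined with the continuity regularity \eqref{cont} and parabolic Gagliardo--Nirenberg interpolation, provides Lebesgue bounds on $|\N u|^{2q-p}$ (and on $|\N u|^{2(q-1)}(|\N u|^2+|\N v|^2)^{(2-p)/2}$) in spaces precisely dual to those encoded by \eqref{h1} and \eqref{F1}. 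The explicit choices $d=\big(N(q-(p-1))-p+2q\big)/(q-1)$ and the Lebesgue balance in \eqref{F1} are exactly the critical ones to close the H\"older chain, leaving no slack; this is why the range of $q$ in \eqref{qu} appears.

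\emph{Passage to the limit and Gronwall.} The renormalization remainder $S_n''(u)a(\cdot,\N u)\cdot\N u\,\vp$ is supported on $\{n<|u|<2n\}$ and vanishes as $n\to\infty$ by virtue of \cref{lemgrad} (itself a consequence of \eqref{pot}), and similarly for $v$; thus the renormalization can be eliminated. Sending $n\to\infty$ and then $k\to\infty$, one arrives at an integral inequality of the form
\[
\int_\Omega \Psi\big((u-v)_+(\tau)\big)\,dx \le C\int_0^\tau \Phi(s)\int_\Omega \Psi\big((u-v)_+(s)\big)\,dx\,ds,
\]
where $\Psi$ is a nonnegative nondecreasing function vanishing at $0$ (coming from the primitives $\Theta_k$) and $\Phi\in L^1(0,T)$ by the previous step. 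Since $u_0\le v_0$ the initial trace vanishes, and Gronwall's lemma yields $(u-v)_+\equiv 0$, i.e.\ $u\le v$ a.e.\ in $Q_T$.
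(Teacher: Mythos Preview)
Your overall strategy---renormalize with $S_n$, subtract, linearize the Hamiltonian via \eqref{H3}, and exploit the degenerate monotonicity \eqref{a2}---matches the paper's, and your identification of the residual $|B|^2(|\N u|^2+|\N v|^2)^{(2-p)/2}$ after the weighted Young step is correct. The gap is in the closing mechanism. First, your test function $T_k\big((S_n(u)-S_n(v))_+\big)$ creates a support mismatch: the absorbed dissipation lives only on $\{0<z_n<k\}$, while the linearized right-hand side carries the factor $T_k((z_n)_+)=k$ on $\{z_n>k\}$, where there is nothing left to absorb $|B|\,|\N z_n|$. Second, and more seriously, even on the matching set the residual $\big[g^2+|\N u|^{2(q-1)}+|\N v|^{2(q-1)}\big](|\N u|^2+|\N v|^2)^{(2-p)/2}$ multiplied by the test function does \emph{not} factor as $\Phi(s)\,\Psi\big((u-v)_+\big)$ with $\Phi\in L^1(0,T)$: the gradient factor depends on $x$ and, by \cref{lemgrad}, lies only in a critical Lebesgue space, not in $L^1_tL^\infty_x$. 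No H\"older splitting recovers $\int_\Omega\Psi((u-v)_+)\,dx$ on the right with an integrable-in-time coefficient, so the Gronwall inequality you write is not reachable from what precedes it.

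The paper closes differently. It takes the $G_k$-based power test function $\vp(z_n)=\big(G_k(z_n)+\mu\big)^{\si-1}-\mu^{\si-1}$, adapted to the $L^\si$ class \eqref{cont}, so that both dissipation and linearized term live on $\{z_n>k\}$. It does \emph{not} absorb the right-hand side into the dissipation via Young; instead it converts the weighted second-order dissipation into an unweighted $L^a$ gradient norm via H\"older, with $a=\tfrac{2d(q-1)}{2-p+d(q-1)}$ chosen so that the inverse weight $(|\N u|^2+|\N v|^2)^{\frac{a(2-p)}{2(2-a)}}$ is controlled exactly by \eqref{h1} and \cref{lemgrad}. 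A H\"older--Gagliardo--Nirenberg chain then yields a self-referential inequality on the super-level sets of the form
\[
\|G_k(u-v)\|_X \le c\,\Big(\iint_{E_k}\B_1^d\Big)^{m_1}\Big(\iint_{E_k}\B_2\Big)^{m_2}\|G_k(u-v)\|_X,
\]
and the conclusion comes from the Stampacchia-type \cref{lemell1} (the bracketed factor tends to $0$ along a sequence $k_j\uparrow\sup(u-v)_+$), not from Gronwall.
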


The next range corresponds to the case of lower values of $q$.   Namely,  we consider
\begin{equation}\label{q3} 
1\le q < p-\frac{N}{N+1}\quad\text{and}\quad 1+\frac{N}{N+1}< p<2
\end{equation}
and  that $g$ fulfils
\begin{equation}\label{h2} 
g\in L^d(Q_T)\quad\text{with}\quad d>\frac{p(N+1)-N}{p(N+1)-(2N+1)}\quad \left(d= \infty\,\,\text{if}\,\, p=1+\frac{N}{N+1}\right).
\end{equation}
As far as the source term $f$ is concerned, we suppose 
\begin{equation}\label{F2}
f\in L^1(Q_T).
\end{equation}

\medskip

Thus the result is the following.

\medskip

\begin{theorem}\label{teo33}
Assume \eqref{a2}--\eqref{a3} and    that $H(t,x,\xi)$ satisfies  \eqref{H1}  and \eqref{H3} with    \eqref{q3}--\eqref{F2}. Let $u$ and $v$ be a renormalized subsolution and a renormalized supersolution of \eqref{Dp}, respectively, 
such that \eqref{ET} holds true. Then, we have that $u\le v$ in ${Q_T}$.
\end{theorem}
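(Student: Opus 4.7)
The plan is to mimic the strategy used for the low-$q$ case in \cref{thm26} (the $p=2$ counterpart of this statement), adapted to the singular $p$-Laplacian setting via a linearization of $H$ combined with a careful exploitation of the renormalized formulation and the asymptotic decay \eqref{ET}. The key new ingredients, compared with the regular case $p=2$, are the degenerate weight $(|\nabla u|^2+|\nabla v|^2)^{(p-2)/2}$ coming from the monotonicity \eqref{a2}, and the fact that, since we only have $L^1$ data, there is no a priori energy estimate, so \eqref{ET} must be invoked systematically to handle the remainders produced by the renormalization.

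The starting point is to set $w=u-v$ and to use the renormalized formulations of the sub- and super-solution in order to produce, after subtraction, an inequality for the bounded truncations $T_k(u)-T_k(v)$. I would test against a nonnegative cut-off of $(T_k(u)-T_k(v))_+$ localized by a suitable renormalization $S_n$, and then exploit the weighted Lipschitz bound \eqref{H3} to linearize the Hamiltonian: on the set $\{\nabla u\neq \nabla v\}$ one can write
\begin{equation*}
H(t,x,\nabla u)-H(t,x,\nabla v)=\Phi(t,x)\cdot \nabla(u-v),\qquad |\Phi|\leq c_2\bigl(g+|\nabla u|^{q-1}+|\nabla v|^{q-1}\bigr).
\end{equation*}
On the left-hand side the monotonicity \eqref{a2} gives the lower bound $\alpha(|\nabla u|^2+|\nabla v|^2)^{(p-2)/2}|\nabla(u-v)|^2$. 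A weighted Young inequality then lets me absorb $\Phi\cdot\nabla(u-v)$ into this monotone term, at the price of a residual essentially of the form
\begin{equation*}
C\bigl(g^2+|\nabla u|^{2(q-1)}+|\nabla v|^{2(q-1)}\bigr)\bigl(|\nabla u|^2+|\nabla v|^2\bigr)^{\frac{2-p}{2}},
\end{equation*}
which I need to bound by an $L^1$ function in order to apply Gronwall.

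The global integrability of this residual is precisely where the range \eqref{q3} and the threshold \eqref{h2} come into play: renormalized solutions with $L^1$ data that satisfy \eqref{ET} enjoy the Marcinkiewicz-type gradient regularity $|\nabla u|,|\nabla v|\in L^{p-\frac{N}{N+1},\infty}(Q_T)$, and the exponent $d>\frac{p(N+1)-N}{p(N+1)-(2N+1)}$ is calibrated exactly so that H\"older's inequality closes between $g$ (respectively $|\nabla u|^{q-1}$, $|\nabla v|^{q-1}$) and $(|\nabla u|+|\nabla v|)^{2-p}$. In parallel, the extra terms generated by $S_n''$ in the renormalization are supported on $\{n<|u|<2n\}\cup\{n<|v|<2n\}$ and vanish as $n\to\infty$ thanks to \eqref{ET}. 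Letting first $n\to\infty$ and then $k\to\infty$, one is left with an integral inequality of the form
\begin{equation*}
\int_\Omega (u-v)_+(t)\,dx \le C\int_0^t\int_\Omega (u-v)_+(s)\,dx\,ds,
\end{equation*}
which, combined with the initial ordering $u_0\le v_0$, yields $u\le v$ in $Q_T$ via Gronwall.

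The main obstacle is the delicate balance in the Young's inequality step. The weight $(|\nabla u|^2+|\nabla v|^2)^{(p-2)/2}$ is singular for $1<p<2$ on the region where the gradients are small, while its compensating factor $(|\nabla u|^2+|\nabla v|^2)^{(2-p)/2}$ in the residual is correspondingly singular where the gradients are large; the residual is therefore hardest to control precisely on the unbounded parts of $u$ and $v$. The combination of the Marcinkiewicz estimate with the restriction $q<p-\frac{N}{N+1}$ in \eqref{q3} and the integrability threshold \eqref{h2} on $g$ is exactly what secures the required global integrability, while \eqref{ET} is what keeps the truncation errors uniformly under control through the passage to the limit in $n$ and $k$.
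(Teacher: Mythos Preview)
Your proposal has a genuine gap at the Young-inequality/Gronwall step. After absorbing $\Phi\cdot\nabla(u-v)$ into the monotone term, the residual you display,
\[
C\bigl(g^2+|\nabla u|^{2(q-1)}+|\nabla v|^{2(q-1)}\bigr)\bigl(|\nabla u|^2+|\nabla v|^2\bigr)^{\frac{2-p}{2}},
\]
carries \emph{no factor of $(u-v)_+$}. Bounding it merely in $L^1(Q_T)$ therefore yields at best $\int_\Omega(u-v)_+(t)\,dx\le \|B\|_{L^1(Q_t)}$, which is a uniform bound, not the inequality $\int_\Omega(u-v)_+(t)\,dx\le C\int_0^t\int_\Omega(u-v)_+\,dx\,ds$ that you claim. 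For the latter you would need the residual coefficient to lie in $L^\infty$ (or in a space that lets you close via Sobolev against the absorbed gradient term), and for unbounded sub/supersolutions with $L^1$ data this fails. A secondary but real issue: the Marcinkiewicz regularity $|\nabla u|,|\nabla v|\in L^{p-N/(N+1),\infty}(Q_T)$ is \emph{not} available for the full gradients of renormalized sub/supersolutions; only $|\nabla u_+|$ and $|\nabla v_-|$ enjoy it (cf.\ \cref{lemmaq}), and one has to argue separately that on $\{u>v\}$ the full gradients are dominated by these together with $|\nabla(u-v)_+|$.

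The paper does not attempt a Gronwall argument at all. It tests the difference of the renormalized inequalities with $\varphi(z_n)=\mu^{-\lambda}-(G_k(z_n)+\mu)^{-\lambda}$ for the specific exponent $\lambda=\frac{N+1}{N}(p-q)-1$, keeps the Hamiltonian contribution in the form $|\nabla G_k(z_n)|\,\mathcal{B}_{n,1}$ \emph{without} absorbing it via Young, and then runs a chain of H\"older estimates together with \cref{leml1} (an optimization over the parameter $\mu$) to reach a self-referential bound on $\|\nabla G_k(u-v)\|_{L^q}$. The comparison $u\le v$ is then deduced from the level-set smallness criterion of \cref{lemell1}, which exploits that $\iint_{E_k}\mathcal{B}_1^{q'}$ and $\iint_{E_k}\mathcal{B}_2$ become small as $k$ grows, rather than from any time-integration argument.
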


\begin{remark}\label{lost2} 
Let us mention some peculiarity of our results, for $1<p\leq 2$. 
\begin{itemize} 
\item[i)] It is worth pointing out that the case $p-\frac{N}{N+2}< q<p$ is not considered here. Indeed, as already observed in the elliptic case (see \cite[Remark $3.5$]{Po}), we would need to require more regularity on the gradient of the sub/supersolutions,  in order to apply the linearization technique, which turns out to be unnatural in our framework. 
Indeed we should require the sub/supersolution to belong to the  space 
$$
L^{\mu} (0,T; W^{1,\mu}_0 (\Omega)) \quad  \mbox{ with } \quad \mu= (\sigma-1)(p-q)+q
=N(q-(p-1))+2q-p
$$ (see   \cref{lemgrad}) and $\mu >p$ if  $p-\frac{N}{N+2}< q$. Consequently we would have such a result under the additional assumption that $u \in L^{\mu} (0,T; W^{1,\mu}_0 (\Omega))$.

\item[ii)]  Note that   the critical growth $q=p-\frac{N}{N+1}$, that corresponds to the case $\si=1$ and $m=r=1$,  has been excluded in \eqref{qu} (\eqref{q1prime} if $p=2$). For such a value we have  a slightly different result whose proof follows from the one of Theorem \ref{teo33},  with the following hypotheses on the data: 
$u_0\in L^{1+\om}(\Omega)$ and  $f\in L^{1+\om}(Q_T)$ with $\om>0$. 
\end{itemize}
\end{remark}

\subsection{Assumptions for $2< p<N$}\label{hppge2}

In this case we change \eqref{p2} into the following problem: 
\begin{equation}\label{P}
\begin{cases}
\begin{array}{ll}
\ds  u_t-\text{div } \big(A(x)|\N u|^{p-2}\N u \big)=H(t,x,\N u)\qquad&\ds\text{in}\quad Q_T,\\
\ds  u (t,x) =0 & \ds\text{on}\quad(0,T)\times \partial \Omega,\\
 \ds u(0,x)=u_0(x) &\ds\text{in}\quad \Omega,
\end{array}
\end{cases}
\end{equation}
where the matrix $A(x)$ is bounded, coercive and with measurable coefficients, while the right hand side  satisfies a superlinear growth condition with respect to  the gradient.
 
More precisely, we assume that $A:\Omega\to\mathbb{R}^{N\times N}$ is a bounded and uniformly elliptic matrix with measurable coefficients, i.e. 
\begin{equation}\label{A}
\begin{array}{c}
A(x)=\{a_{ij}(x)\}_{i,j=1}^N \quad\text{with} \quad a_{ij}\in L^{\infty}(\Omega) \quad\forall  1\le i,j\le N\,,
\\[4mm] \dys 
\mbox{ such that } \quad \dys \exists\al,\,\b:\,\,  0<\al\le \beta \quad\text{and} \quad \al |\xi|^2\le A(x)\xi\cdot\xi\le \beta |\xi|^2 \quad\text{a.e.}\,\,x\in \Omega,\,\,
\forall \xi\in \mathbb{R}^N. 
\end{array}
\end{equation}
As far as the Hamiltonian term is concerned, we suppose, in addition to \eqref{H1},   that  $\exists M>0$, such that for any $\eps \in (0,1)$
\begin{equation}\label{H2}
\begin{array}{c}
\dys
H(t,x,\xi)- (1-\varepsilon)^{p-1} H\bigg((1-\varepsilon)^{p-2} \ t,x,\frac{\eta}{1-\varepsilon}\bigg) 
\leq c_2 \varepsilon^{1-q} \big| \xi - \eta \big|^q+  L|\xi-\eta| \left(   |\xi|^{\frac{p-2}2} + |\eta|^{\frac{p-2}2}\right) + \varepsilon M \\
[4mm]\ds
\text{with}\quad p-1<q<p,
\end{array}
\end{equation}
a.e. $(t,x)\in Q_T$, for all $\xi,\,\eta\in\R^N$ with $\xi\ne\eta$ and $\eps\in (0,1)$.\\
  
 The above hypothesis seems to be quite technical, since it combines several properties of the nonlinear lower order term. 
It is not so hard to see that, for example, the model Hamiltonian  
$$
H(t,x,\xi) = |\xi|^q + f_0 (t,x) 
$$ 
satisfies \eqref{H2} for $p-1<q<p$ and for a function $f_0$ bounded above and not increasing with respect to the $t$ variable. Let us underline that also some perturbations, through locally Lipschitz function, weighted with the $(p-2)/2$ power of the gradient of such a model Hamiltonian still fulfill hypothesis \eqref{H2}.

\medskip

As for the previous results, we have two regimes depending on the values of $q$. 

\medskip
We first deal with the range
 \begin{equation}\label{Q1}
p-\frac{N}{N+1}< q<p
\end{equation}
that let us considering solutions in the class \eqref{cont}--\eqref{pot}.

\begin{theorem}\label{teoen1}
Assume \eqref{A}  and that $H(t,x,\xi)$ satisfies  \eqref{H1} together with  \eqref{F1}, \eqref{H2} and \eqref{Q1}. Let $u$ and $v$ be a subsolution and a supersolution of \eqref{P}, respectively, satisfying \eqref{cont} and \eqref{pot}. Assume that   $u_0\leq  v_0\le\bar{v}_0<\infty$ with $u_0$, $v_0\in L^\si(\Omega)$. 
Then  we have that $u\le v$ in ${Q_T}$.
\end{theorem}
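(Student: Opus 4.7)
The plan is to compare $u$ with a one-parameter rescaling of $v$ and send the parameter to zero. For $\eps \in (0,1)$ set
\begin{equation*}
v^\eps(t,x) := (1-\eps)\,v\bigl((1-\eps)^{p-2}\,t,\,x\bigr).
\end{equation*}
The $(p-1)$-homogeneity of $u \mapsto -\dive(A|\nabla u|^{p-2}\nabla u)$ together with the matched time rescaling yields, after a change of variables in the renormalized formulation, that $v^\eps$ is a renormalized supersolution on $Q_T$ of
\begin{equation*}
(v^\eps)_t - \dive\bigl(A|\nabla v^\eps|^{p-2}\nabla v^\eps\bigr) \ \geq\ (1-\eps)^{p-1}\,H\bigl((1-\eps)^{p-2}\,t,\ x,\ \tfrac{\nabla v^\eps}{1-\eps}\bigr),
\end{equation*}
with initial datum $(1-\eps)v_0$; the upper bound $v_0 \leq \bar{v}_0$ then gives $u_0 - v^\eps(0,\cdot) \leq \eps\bar{v}_0 \to 0$. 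Subtracting from the renormalized subsolution inequality for $u$ and applying the structural hypothesis \eqref{H2} pointwise with $\xi = \nabla u$, $\eta = \nabla v^\eps$, the difference $w := u - v^\eps$ satisfies, in the renormalized sense,
\begin{equation*}
w_t - \dive\bigl[A\bigl(|\nabla u|^{p-2}\nabla u - |\nabla v^\eps|^{p-2}\nabla v^\eps\bigr)\bigr] \ \leq\ c_2\,\eps^{1-q}\,|\nabla w|^q + L\,|\nabla w|\bigl(|\nabla u|^{(p-2)/2} + |\nabla v^\eps|^{(p-2)/2}\bigr) + \eps M.
\end{equation*}

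I would realize this rigorously by taking, in the renormalized formulations \eqref{renf} for $u$ and for $v^\eps$, the common test function $\vp = T_h\bigl((T_k(u)-T_k(v^\eps))_+\bigr)$ with $S = S_n$ a smooth truncation having $S_n'$ compactly supported in $[-2n,2n]$, subtracting, and letting successively $n,k \to \infty$. The contributions carrying $S''_n$ vanish thanks to the integrability from \eqref{pot} and the energy-decay estimate \cref{lemgrad}, in line with the elliptic argument of \cite{Po}. On the LHS, combining the ellipticity \eqref{A} with the classical $p\geq 2$ monotonicity
\begin{equation*}
\bigl(A|\xi|^{p-2}\xi - A|\eta|^{p-2}\eta\bigr)\cdot(\xi-\eta) \ \geq\ c_0\alpha\,|\xi-\eta|^2\,\bigl(|\xi|+|\eta|\bigr)^{p-2}
\end{equation*}
produces a coercive term of the form $\iint_{Q_t}|\nabla(T_h(w)_+)|^2\bigl(|\nabla u|+|\nabla v^\eps|\bigr)^{p-2}$. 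The Lipschitz-weighted term $L|\nabla w|(|\nabla u|^{(p-2)/2}+|\nabla v^\eps|^{(p-2)/2})$ is absorbed into this coercive term via Young's inequality, leaving a residual that is bounded in $L^1(Q_T)$ uniformly in $\eps$ by \eqref{pot}; the term $\eps M$ is trivially $O(\eps)$.

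The main obstacle is the term $c_2\,\eps^{1-q}\,|\nabla w|^q$, whose prefactor diverges as $\eps \to 0$ since $q > p-1 > 1$. The idea is to pair $|\nabla w|$ with the weight $(|\nabla u|+|\nabla v^\eps|)^{(p-2)/2}$ supplied by the coercive LHS and to apply Young's inequality with exponents chosen so that the absorbed piece contributes $\delta\,|\nabla w|^2(|\nabla u|+|\nabla v^\eps|)^{p-2}$ while the residual produces a power of $(|\nabla u|+|\nabla v^\eps|)$ that is integrable via the uniqueness class \eqref{pot}. The threshold $q > p - \frac{N}{N+1}$ in \eqref{Q1} is precisely what allows the Young exponents to fall within the integrability range provided by \eqref{pot}; this, together with the strong convergence $\nabla v^\eps \to \nabla v$ on each truncated energy level (which follows from the $L^p$-continuity of time translations of $T_k(v)$), defeats the $\eps^{1-q}$ blow-up in the limit. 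What remains, after all truncations are removed, is a Gronwall-type inequality for $t \mapsto \|(u-v^\eps)_+(t,\cdot)\|_{L^2(\Omega)}^2$ with initial datum $O(\eps^2)$ and $o_\eps(1)$ remainder, and sending $\eps \to 0$ yields $u \leq v$ in $Q_T$. The finest technical point is thus the careful matching of the Young exponents with the uniqueness class \eqref{pot} to tame the singular $\eps^{1-q}$ factor.
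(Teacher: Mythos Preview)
Your rescaling $v^\eps(t,x) = (1-\eps)v((1-\eps)^{p-2}t,x)$ and the use of \eqref{H2} are correct and match the paper. The genuine gap is in how you handle the singular term $c_2\,\eps^{1-q}|\nabla w|^q$. You propose to tame the factor $\eps^{1-q}$ by Young's inequality together with the strong convergence $\nabla v^\eps \to \nabla v$, but this cannot work: as $\eps\to 0$ one has $\nabla w = \nabla(u-v^\eps) \to \nabla(u-v)$, which does \emph{not} vanish (that is precisely what you are trying to prove), so $\eps^{1-q}|\nabla w|^q$ genuinely blows up. No splitting via Young's inequality with weights in the class \eqref{pot} can absorb a prefactor diverging like $\eps^{1-q}$ while the base quantity stays bounded away from zero.

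The paper's device is to divide the whole inequality by $\eps$ before testing, i.e.\ to work with
\[
z_n^\eps(t,x)=\frac{e^{-\lambda t}}{\eps}\bigl(u_n(t,x)-v_n^\eps(t,x)\bigr)-e^{-\lambda t}\bar v_0 - Mt,
\]
so that $\eps^{1-q}|\nabla w|^q$ becomes, up to a bounded exponential factor, $|\nabla z_n^\eps|^q$ with \emph{no} $\eps$ in front; the term $\eps M$ becomes $M$, which is neutralized by the $-Mt$ shift. One then proves a uniform-in-$\eps$ estimate on $G_k(z_n^\eps)$ in $C([0,T];L^\sigma)$ and concludes $u-(1-\eps)v\le C\eps$, whence $u\le v$ after $\eps\to 0$. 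This rescaling is the missing idea in your argument.

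A secondary issue: your test function $T_h((T_k(u)-T_k(v^\eps))_+)$ and the resulting $L^2$ Gronwall estimate are not adapted to the uniqueness class \eqref{cont}--\eqref{pot}, which is $L^\sigma$ with $\sigma=\frac{N(q-(p-1))}{p-q}$. The paper tests with $\vp(z_n^\eps)=\int_0^{G_k(z_n^\eps)}(\mu+|w|)^{\sigma-2}\,dw$, which produces exactly the $L^\sigma$-energy $\int|\nabla\Psi_{p,\mu}(G_k(z_n^\eps))|^p$ and allows the H\"older/Gagliardo--Nirenberg balance with exponents $(p/q,\,p^*/(p-q),\,N/(p-q))$ to close via \eqref{pot}; the threshold \eqref{Q1} enters there, not in controlling $\eps^{1-q}$.
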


Next, we consider the last case, that is the range 
\begin{equation}\label{Q3} 
p-1<q< p-\frac{N}{N+1}
\end{equation} 
and we have the following result. 

\begin{theorem}\label{teorinL1}
Assume \eqref{A}  and that $H(t,x,\xi)$ satisfies  \eqref{H1} together with  \eqref{F2}, \eqref{H2} and \eqref{Q3}. Let $u$ and $v$ be a subsolution and a supersolution of \eqref{P}, respectively, such that   \eqref{ET} holds. Assume that   $u_0\leq  v_0\le\bar{v}_0<\infty$ with $u_0$, $v_0\in L^\si(\Omega)$. 
Then  we have that $u\le v$ in ${Q_T}$.
\end{theorem}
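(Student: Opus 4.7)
The plan is to adapt the convexity--rescaling strategy sketched after \cref{teoen1} to the $L^1$-data regime \eqref{Q3}--\eqref{F2}. For $\varepsilon\in(0,1)$ I would introduce the rescaled function
\[
w_\varepsilon(t,x):=(1-\varepsilon)\,v\bigl((1-\varepsilon)^{p-2}t,x\bigr),\qquad (t,x)\in Q_T.
\]
Since $p>2$ and $\varepsilon\in(0,1)$ we have $(1-\varepsilon)^{p-2}\in(0,1)$, so $w_\varepsilon$ is well-defined on $Q_T$, belongs to $\mathcal{T}^{1,p}_0(Q_T)$, and the change of variable $s=(1-\varepsilon)^{p-2}t$ shows that the property \eqref{ET} for $v$ transfers to $w_\varepsilon$. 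A direct computation shows that $w_\varepsilon$ is a renormalized supersolution of
\[
(w_\varepsilon)_t-\dive\bigl(A(x)|\N w_\varepsilon|^{p-2}\N w_\varepsilon\bigr)\ \ge\ (1-\varepsilon)^{p-1}H\Bigl((1-\varepsilon)^{p-2}t,x,\tfrac{\N w_\varepsilon}{1-\varepsilon}\Bigr),
\]
with initial trace $w_\varepsilon(0,x)=(1-\varepsilon)v_0(x)$, so that $\bigl(u_0-w_\varepsilon(0)\bigr)_+\le\varepsilon\,\bar v_0$. Evaluating hypothesis \eqref{H2} at $\xi=\N u$ and $\eta=\N w_\varepsilon$ then yields the pointwise key inequality
\begin{multline}\label{plan:main}
H(t,x,\N u)-(1-\varepsilon)^{p-1}H\Bigl((1-\varepsilon)^{p-2}t,x,\tfrac{\N w_\varepsilon}{1-\varepsilon}\Bigr) \\
\le c_2\,\varepsilon^{1-q}|\N(u-w_\varepsilon)|^{q}+L|\N(u-w_\varepsilon)|\bigl(|\N u|^{\frac{p-2}{2}}+|\N w_\varepsilon|^{\frac{p-2}{2}}\bigr)+\varepsilon M.
\end{multline}

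Next I would subtract the renormalized inequalities for $u$ (subsolution of \eqref{P}) and for $w_\varepsilon$ (supersolution of the rescaled equation), and test the resulting difference formulation with
\[
\varphi_{n,k}:=S_n'(u)\,S_n'(w_\varepsilon)\,T_k\bigl((u-w_\varepsilon)_+\bigr),
\]
where $T_k$ is the usual truncation and $S_n\in W^{2,\infty}(\R)$ is a plateau cut-off with $S_n'\equiv 1$ on $[-n,n]$ and $\mathrm{supp}\,S_n'\subset[-2n,2n]$. As $n\to\infty$ the renormalization errors generated by $S_n''$ are controlled by terms of the form $\tfrac{1}{n}\iint_{\{n<|u|<2n\}}|\N u|^p$ and the analogous quantity for $w_\varepsilon$, both vanishing thanks to \eqref{ET}. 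The parabolic term produces, after a Landes-type time regularization, an integrated entropy $\int_\Omega\Theta_k\bigl((u-w_\varepsilon)_+(\tau)\bigr)dx$ with $\Theta_k(s)=\int_0^s T_k(r)dr$, whose initial contribution is bounded by $\tfrac{1}{2}\,\varepsilon^2\bar v_0^{\,2}|\Omega|$. The monotonicity of $\xi\mapsto|\xi|^{p-2}\xi$ combined with \eqref{A} produces a nonnegative diffusive quantity
\[
\mathcal{D}^{(k)}_\varepsilon\ \gtrsim\ \iint_{\{0<u-w_\varepsilon<k\}}\bigl(|\N u|+|\N w_\varepsilon|\bigr)^{p-2}|\N(u-w_\varepsilon)|^{2}\,dx\,dt.
\]

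The three terms on the right-hand side of \eqref{plan:main} are then handled as follows. The constant $\varepsilon M$ contributes at most $\varepsilon M\,\|(u-w_\varepsilon)^+\|_{L^1(Q_T)}\le C\varepsilon$, using $u,v\in C([0,T];L^1(\Omega))$ and $T_k\le (u-w_\varepsilon)^+$. The Lipschitz-type term $L|\N(u-w_\varepsilon)|(|\N u|^{(p-2)/2}+|\N w_\varepsilon|^{(p-2)/2})$ is absorbed by Young's inequality into $\tfrac{1}{2}\mathcal{D}^{(k)}_\varepsilon$ up to a harmless $L^1(Q_T)$ remainder. The delicate term is the superlinear one $c_2\varepsilon^{1-q}|\N(u-w_\varepsilon)|^{q}$: for fixed $\varepsilon>0$, one applies Young with exponents $(2/q,2/(2-q))$, splitting according to whether $|\N u|+|\N w_\varepsilon|\le 1$ or $>1$, to bound it by $\delta\,\mathcal{D}^{(k)}_\varepsilon+C_{\delta,\varepsilon}$, where the remainder is finite uniformly in $k$ thanks to $q<p$ and to the tail control provided by \eqref{ET} on the complement of the truncated region.

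Passing first $k\to\infty$ by monotone convergence produces an estimate of the form $\int_\Omega(u-w_\varepsilon)_+(\tau)\,dx\le\eta(\varepsilon)$ with $\eta(\varepsilon)\to 0$ as $\varepsilon\to 0$. Using the continuity $v\in C([0,T];L^1(\Omega))$, dominated convergence gives $w_\varepsilon\to v$ in $L^1(Q_T)$, and one concludes $u\le v$ almost everywhere in $Q_T$. \textbf{The main obstacle} is exactly the compatibility, in the $L^1$-data regime, of the diverging prefactor $\varepsilon^{1-q}$ with the weak gradient information provided by \eqref{ET}: the superlinear remainder must be absorbed into $\mathcal{D}^{(k)}_\varepsilon$ via Young's inequality with constants that depend on $\varepsilon$, while keeping the final bound $\eta(\varepsilon)$ vanishing as $\varepsilon\to 0$. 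This delicate equilibrium, paired with the impossibility of relying on the stronger energy class \eqref{cont}--\eqref{pot}, is what pins down the threshold $q<p-N/(N+1)$ and motivates the precise scaling form \eqref{H2} of the Hamiltonian.
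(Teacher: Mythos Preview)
Your proposal contains a genuine gap at exactly the point you flag as ``the main obstacle'': you never explain how to make the superlinear remainder compatible with the requirement $\eta(\varepsilon)\to 0$. The paper's device is precisely to \emph{divide by $\varepsilon$} before testing, setting
\[
z_n^\varepsilon(t,x)=\frac{e^{-\lambda t}}{\varepsilon}\bigl(u_n(t,x)-v_n^\varepsilon(t,x)\bigr)-e^{-\lambda t}\bar v_0-Mt,
\]
with $v_n^\varepsilon(t,x)=(1-\varepsilon)v_n((1-\varepsilon)^{p-2}t,x)$. Under this rescaling, $\varepsilon^{1-q}|\N(u-w_\varepsilon)|^q$ becomes (up to the exponential) $|\N z_n^\varepsilon|^q$ with \emph{no} diverging prefactor, the initial contribution satisfies $G_k(z_n^\varepsilon(0))\equiv 0$ for $k\ge\bar v_0$, and the $\varepsilon M$ term is absorbed by the $-Mt$ shift. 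One then proves $G_k(z^\varepsilon)\equiv 0$, which unwinds to $u-(1-\varepsilon)v((1-\varepsilon)^{p-2}t,x)\le\varepsilon(k+\bar v_0e^{\lambda t}+Mte^{\lambda t})$; this right-hand side manifestly vanishes as $\varepsilon\to 0$. In your scheme, by contrast, you keep the bare difference $u-w_\varepsilon$ and rely on Young's inequality with $\varepsilon$-dependent constants; the resulting remainder $C_{\delta,\varepsilon}$ carries the factor $\varepsilon^{1-q}\to\infty$ and there is no mechanism forcing the final $L^1$-bound to vanish (a mere uniform bound on $\int(u-w_\varepsilon)_+$ would only give $\int(u-v)_+\le C$, not $u\le v$).

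A second, related gap is the choice of test function. The paper does \emph{not} test with $T_k$; it uses the power-type weight $\varphi(z_n^\varepsilon)=1-(1+G_k(z_n^\varepsilon))^{-\mu}$ with the specific exponent $\mu=(p-q)\tfrac{N+1}{N}-1$, designed so that $\tfrac{q(\mu+1)}{p-q}$ matches the Gagliardo--Nirenberg exponent for $L^\infty_tL^1_x\cap L^p_tW^{1,p}_x$. This is what allows the $|\N G_k|^q$ and the weighted Lipschitz terms to be closed against the degenerate diffusion $\varepsilon^{p-2}\iint|\N G_k|^p(1+G_k)^{-\mu-1}$ via \cref{teoGN} and \cref{lemell}. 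With your test $T_k((u-w_\varepsilon)_+)$ and the splitting ``$|\N u|+|\N w_\varepsilon|\le 1$ versus $>1$'', no such closed a priori inequality is produced: on the small-gradient set the diffusive term $(|\N u|+|\N w_\varepsilon|)^{p-2}|\N(u-w_\varepsilon)|^2$ degenerates and cannot absorb $|\N(u-w_\varepsilon)|^q$, while on the complement you still owe a lower-order term whose $k$-uniform finiteness is asserted but not justified by \eqref{ET} alone.
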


\section{Notation and basic tools}\label{tool}
\setcounter{equation}{0}
\renewcommand{\theequation}{\thesection.\arabic{equation}}
\numberwithin{equation}{section}

With the purpose of  dealing with the bounded part of the sub/supersolutions considered during the paper, we here introduce a smooth approximation of the classical truncation function $T_n(s)=\max\{-n,\min\{n,s \}\}$. We define the smoothed truncation function $S_n(\cdot)$ and $\theta_n(\cdot)$ as follows:
\begin{equation}\label{ttn}
S_n(z)=\int_0^z  \theta_n(\tau)  \,d \tau,\qquad \mbox{while }\qquad
\theta_n(z)=
\begin{cases} 
\begin{array}{lrl}
1  & & |z|\le n,\\
\dys \frac{2n-|z|}{n}\qquad &n<&|z |\le 2n,\\
 0  & &   |z|>2n.
\end{array}
\end{cases}
\end{equation}

Moreover, here and in all the paper,  we denote by $G_k (z)$ the function
$$
G_k (z) = (z-k)_+ 
$$
for every $z\in\R $ and for any $k\geq 0$.

\medskip

Here we  recall a classical parabolic regularity result that we use systematically in the following. 

\begin{theorem}[Gagliardo-Nirenberg]\label{teoGN}
Let $\Omega\subset \mathbb{R}^N$ be a bounded and open subset and $T>0$; if
\begin{equation*}
w\in L^{\infty}(0,T;L^{h}(\Omega))\cap L^{\eta}(0,T;W_0^{1,\eta}(\Omega))
\end{equation*}
with
\[
h,\, \eta\ge 1,\,\, \eta<N  \quad \text{and}\quad h\le\eta^* \,,
\]
then we have that
\begin{equation*}
w\in L^y(0,T;L^j(\Omega))
\end{equation*}
where the pair $(j,y)$ fulfils
\begin{equation*}
h\le j\le \eta^*,\,\,\eta\le y\le \infty
\qquad \mbox{ and } \qquad 
\frac{Nh}{j}+\frac{ N(\eta-h)+h\eta }{y}=N.
\end{equation*}
Moreover there exists a positive constant $c(N,\eta,h)$ such that the following inequality holds true:
\begin{equation}\label{disGN}\dys 
\int_0^T \|w(t)\|_{L^j(\Omega)}^y\,dt\le c(N,\eta,h)\|w\|_{L^{\infty}(0,T;L^h(\Omega))}^{y-\eta}\int_0^T\|\N w(t)\|_{L^{\eta}(\Omega)}^{\eta}\,dt.
\end{equation}
\end{theorem}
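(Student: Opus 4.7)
The plan is to reduce the statement to the familiar fixed-time Sobolev embedding together with a spatial interpolation (Lyapunov) inequality, and then integrate in the time variable. Concretely, since $\eta<N$, for a.e.~$t\in(0,T)$ the Sobolev embedding $W^{1,\eta}_0(\Omega)\hookrightarrow L^{\eta^*}(\Omega)$ yields a constant $C_S=C_S(N,\eta)$ such that
$$\|w(t)\|_{L^{\eta^*}(\Omega)}\le C_S\,\|\nabla w(t)\|_{L^{\eta}(\Omega)}.$$
On the other hand, for any $j\in[h,\eta^*]$ there is a unique $\theta=\theta(j)\in[0,1]$ with $\tfrac1j=\tfrac{1-\theta}{h}+\tfrac{\theta}{\eta^*}$, and H\"older's inequality in the interpolation form gives
$$\|w(t)\|_{L^{j}(\Omega)}\le \|w(t)\|_{L^{h}(\Omega)}^{1-\theta}\,\|w(t)\|_{L^{\eta^*}(\Omega)}^{\theta}.$$

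Next, I would raise this pointwise bound to the power $y$ and integrate over $(0,T)$, bounding the first factor by the essential supremum in time, to get
$$\int_0^T\|w(t)\|_{L^{j}(\Omega)}^{y}\,dt\le \|w\|_{L^{\infty}(0,T;L^{h}(\Omega))}^{(1-\theta)y}\int_0^T\|w(t)\|_{L^{\eta^*}(\Omega)}^{\theta y}\,dt.$$
The key choice is then $\theta=\eta/y$: this makes the time integrand an $\eta$-th power, so the Sobolev bound applied pointwise in $t$ converts it into the energy $\int_0^T\|\nabla w(t)\|_{L^\eta}^\eta\,dt$, yielding
$$\int_0^T\|w(t)\|_{L^{j}(\Omega)}^{y}\,dt\le C_S^{\eta}\,\|w\|_{L^{\infty}(0,T;L^{h}(\Omega))}^{y-\eta}\int_0^T\|\nabla w(t)\|_{L^{\eta}(\Omega)}^{\eta}\,dt,$$
which is the asserted inequality with $c(N,\eta,h)=C_S^\eta$. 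The admissibility $\theta\in[0,1]$ is exactly the hypothesis $\eta\le y\le\infty$; the endpoint $y=\eta$ corresponds to $\theta=1$, $j=\eta^*$ (pure Sobolev), and $y=\infty$ to $\theta=0$, $j=h$ (the trivial endpoint, to be read with the natural convention on the $L^\infty_t$-norm).

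Finally, I would verify that the choice $\theta=\eta/y$ produces precisely the scaling relation stated in the theorem. Substituting $\theta=\eta/y$ into the interpolation identity $\tfrac1j-\tfrac1h=\theta\bigl(\tfrac1{\eta^*}-\tfrac1h\bigr)$ and using $\tfrac1{\eta^*}=\tfrac1\eta-\tfrac1N$, multiplication by $Nh$ rearranges to
$$\frac{Nh}{j}-N=-\,\frac{N(\eta-h)+h\eta}{y},$$
i.e.~the claimed equation $\tfrac{Nh}{j}+\tfrac{N(\eta-h)+h\eta}{y}=N$.

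There is essentially no obstacle beyond this bookkeeping: all the analytic content is packaged inside the Sobolev embedding, and the remaining work is algebraic. The one mild point of care is to make sure that the range $h\le j\le\eta^*$ is compatible with the condition $h\le \eta^*$ imposed in the hypotheses (so that the interpolation is non-degenerate), and to handle the two endpoint cases separately as noted above.
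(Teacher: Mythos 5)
Your proof is correct. The paper does not actually prove this statement --- it records it as a classical parabolic embedding and uses it as a black box --- and your argument (fixed-time Sobolev embedding for $W^{1,\eta}_0(\Omega)\hookrightarrow L^{\eta^*}(\Omega)$, Lyapunov interpolation between $L^h$ and $L^{\eta^*}$ with exponent $\theta=\eta/y$, then integration in time) is exactly the standard derivation; the exponent bookkeeping checks out and reproduces the stated relation $\frac{Nh}{j}+\frac{N(\eta-h)+h\eta}{y}=N$, with the constant $c=C_S^{\eta}$ depending only on $N$ and $\eta$.
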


Next we state two useful Lemmata that we will use in the sequel in order to conclude the proofs of our results. 

\begin{lemma}\label{lemell1}
Let 
$
w\in L^\ro(0,T;W^{1,\ro}_0(\Omega))\cap L^\infty(0,T;L^\nu(\Omega)),$ with $\ro,\,\nu\ge 1
$   satisfy 
\begin{equation*}
\exists c_0>0, \ \eta\ge 1 \ \mbox{ and } \ m>0 \, : \quad 
\|G_k(w)\|_{X}\le c_0\|G_k(w)\|_{X}^\eta\left(\iint_{E_k}B\,dx\,dt\right)^m \quad \mbox{for some } \quad B\in L^1(Q_T), 
\end{equation*}
for any $k\geq k_0$,  where $X$ is 
a Banach space, 
and 
$
E_k=\left\{
(t,x)\in Q_T:\,  w>k\,, \,|\N w|>0
\right\}.
$
Then $w\le 0$ in $Q_T$.
\end{lemma}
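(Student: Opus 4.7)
The plan is a Stampacchia-type dichotomy argument. I introduce the auxiliary function $\varphi(k) := \|G_k(w)\|_X$ for $k \ge k_0$ and exploit three ingredients: (i) monotonicity of $\varphi$, (ii) absolute continuity of the integral of $B$, and (iii) the classical fact that $\N w = 0$ almost everywhere on every level set $\{w = c\}$, which is available since $w \in L^{\rho}(0,T; W^{1,\rho}_0(\Omega))$.

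First, I would note that $G_k(w) = (w-k)_+$ is pointwise non-increasing in $k$, so $\varphi$ is non-increasing. Moreover, since $w \in L^{\infty}(0,T; L^{\nu}(\Omega))$ is almost everywhere finite, $G_k(w) \to 0$ pointwise as $k \to \infty$ and, after embedding $X$ into an ambient Lebesgue scale, dominated convergence gives $\varphi(k) \to 0$.

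Next I would extract the dichotomy from the hypothesis. Whenever $\varphi(k) > 0$, dividing the given inequality by $\varphi(k)$ yields
\begin{equation*}
1 \le c_0 \, \varphi(k)^{\eta - 1} \Bigl( \iint_{E_k} B \, dx \, dt \Bigr)^{m},
\end{equation*}
and combining this with the uniform bound $\varphi(k) \le \varphi(k_0)$ from monotonicity produces a strictly positive lower bound
\begin{equation*}
\iint_{E_k} B \, dx \, dt \ge \lambda := \bigl( c_0 \, \varphi(k_0)^{\eta - 1} \bigr)^{-1/m} > 0
\end{equation*}
at every $k \ge k_0$ where $\varphi(k) > 0$.

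Finally, I would argue by contradiction. Set $k^\ast := \inf\{k \ge k_0 : \varphi(k) = 0\}$, which is finite by the first step. Assume $k^\ast > k_0$; then the above lower bound applies for every $k \in [k_0, k^\ast)$. As $k \uparrow k^\ast$, left-continuity of $\varphi$ (dominated convergence) forces $\varphi(k^\ast) = 0$, hence $w \le k^\ast$ almost everywhere, and the decreasing sets $E_k = \{w > k, |\N w| > 0\}$ shrink to $\{w = k^\ast, |\N w| > 0\}$, which is of measure zero by Stampacchia's theorem on gradients at level sets. Dominated convergence then forces $\iint_{E_k} B \to 0$, contradicting $\iint_{E_k} B \ge \lambda$. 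Therefore $k^\ast = k_0$, that is, $\varphi(k_0) = 0$ and $w \le k_0$ a.e.; the conclusion $w \le 0$ corresponds to the choice $k_0 = 0$ that is actually invoked in the applications of the lemma.

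The main obstacle will be the propagation step at $k = k^\ast$: it crucially combines the $L^1$-integrability of $B$ (to apply absolute continuity of the integral) with the Stampacchia property $\N w = 0$ a.e. on level sets. Without either ingredient the iteration could stall at some $k^\ast > k_0$, and one would merely recover that $w$ is essentially bounded above, not that $w \le 0$.
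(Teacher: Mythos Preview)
The paper does not supply a proof of this lemma; it simply refers to Lemma~2.1 of \cite{Po}. Your Stampacchia-type dichotomy combined with the vanishing of gradients on level sets is precisely the mechanism behind that cited result, so in substance you are reconstructing the intended argument.

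Two points on execution. First, your route to $k^\ast<\infty$ via $\varphi(k)\to 0$ leans on an unstated embedding of the abstract Banach space $X$ into a Lebesgue space; this is neither given nor needed. The cleaner way is to argue on the $E_k$ side: if $\varphi(k)>0$ for every $k\ge k_0$, your dichotomy forces $\iint_{E_k}B\ge\lambda$ for all such $k$, while $E_k\subseteq\{w>k\}$ has measure tending to zero (since $w\in L^\infty(0,T;L^\nu(\Omega))$ is a.e.\ finite) and dominated convergence with $B\in L^1(Q_T)$ gives the contradiction. Second, the line ``left-continuity of $\varphi$ forces $\varphi(k^\ast)=0$'' is slightly off: what you actually extract from the infimum property is $w\le k^\ast$ a.e.\ (taking $k_n\downarrow k^\ast$ with $\varphi(k_n)=0$), without invoking continuity of $\varphi$ in $X$. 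Your closing remark about $k_0$ is well taken: the argument really yields $w\le k_0$, and the stated conclusion $w\le 0$ corresponds to how the lemma is applied later in the paper, where the key inequality holds for all $k\ge 0$.
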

\begin{proof} The proof follows direclty from the one of
Lemma 2.1 in  \cite{Po}. 
\end{proof}
The next Lemma is a sort of parabolic version of the above one. 

\begin{lemma}\label{lemell}
Let 
$
w\in L^p(0,T;W^{1,p}_0(\Omega))\cap C([0,T];L^\si(\Omega))
$
 be a function satisfying the following inequality:
\begin{equation}\label{X}
\sup_{s\in (0,t)}\|w(s)\|_{L^\si(\Omega)}^\si+\|w\|_{L^p(0,t;W^{1,p}_0(\Omega))}^p\le c_0\sup_{s\in (0,t)}\|w(s)\|_{L^\si(\Omega)}^\eta \|w\|_{L^p(0,t;W^{1,p}_0(\Omega))}^{m}\qquad 
\end{equation}
with $0\le t\le T$ and for some $c_0>0$, $\eta>0$, $m\ge p$ and with $\|w(0)\|_{L^\si(\Omega)}=0$. 
Then $w\equiv0$ in $Q_T$.
\end{lemma}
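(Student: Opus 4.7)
The plan is to reduce \eqref{X} to a scalar inequality in a continuous, nondecreasing quantity and then conclude by a bootstrap/continuation argument.

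First, I would set
\[
\Phi(t) := \sup_{s\in(0,t)}\|w(s)\|_{L^\sigma(\Omega)}^\sigma + \|w\|_{L^p(0,t;W^{1,p}_0(\Omega))}^p, \qquad t\in[0,T],
\]
and check that $\Phi$ is continuous and nondecreasing, with $\Phi(0)=0$. The continuity of the second summand is immediate (it is absolutely continuous in $t$), and the continuity of the running supremum follows from the assumption $w\in C([0,T];L^\sigma(\Omega))$, since the running maximum of a continuous real function on $[0,T]$ is continuous. The condition $\|w(0)\|_{L^\sigma(\Omega)}=0$ then yields $\Phi(0)=0$.

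Next, I would feed the trivial bounds $\sup_{s\in(0,t)}\|w(s)\|_{L^\sigma(\Omega)}^\eta \le \Phi(t)^{\eta/\sigma}$ and $\|w\|_{L^p(0,t;W^{1,p}_0(\Omega))}^m \le \Phi(t)^{m/p}$ into \eqref{X}, obtaining
\[
\Phi(t) \le c_0\,\Phi(t)^{\theta}, \qquad \theta := \frac{\eta}{\sigma}+\frac{m}{p}.
\]
Since $\eta>0$ and $m\ge p$, one has $\theta>1$; setting $C_\ast := c_0^{-1/(\theta-1)}$, this inequality enforces the dichotomy that, for every $t\in[0,T]$, either $\Phi(t)=0$ or $\Phi(t)\ge C_\ast$.

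Finally, I would conclude by continuation. Since $\Phi$ is continuous with $\Phi(0)=0$, the function $\Phi$ stays strictly below $C_\ast$ on some right-neighbourhood of $0$, so by the dichotomy it vanishes identically there. Let $t^\ast:=\sup\{t\in[0,T]:\Phi(t)=0\}$; continuity gives $\Phi(t^\ast)=0$, and if $t^\ast<T$, continuity again produces $\tau\in(t^\ast,T]$ with $\Phi(\tau)<C_\ast$, hence $\Phi(\tau)=0$ by the dichotomy, contradicting the definition of $t^\ast$. Therefore $t^\ast=T$, so $\Phi\equiv 0$ and $w\equiv 0$ in $Q_T$. The only mildly delicate point is the continuity of $\Phi$, in particular of the running supremum; every other step is elementary.
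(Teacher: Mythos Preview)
Your argument is correct and follows essentially the same continuation scheme as the paper's proof: both exploit the continuity in time together with $\|w(0)\|_{L^\sigma(\Omega)}=0$ to show that the quantity controlling $w$ can never leave zero. The only cosmetic difference is that the paper first reduces to $m=p$ and absorbs the gradient term by making the $L^\sigma$-sup small, whereas you package both summands into the single scalar $\Phi(t)$ and obtain the superlinear inequality $\Phi\le c_0\Phi^{\theta}$ directly, which handles all $m\ge p$ at once without the preliminary reduction.
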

\begin{proof}
Without loss of generality we take into account the case $m=p$. Then, since $\|w(0)\|_{L^\si(\Omega)}=0$ and by the continuity assumption we  define 
\[
T^*=\sup\left\{\tau>0: \quad \|w(s)\|_{L^\si(\Omega)}^\si\le\frac{1}{(2c_0)^\frac{\si}{\eta}}\quad \forall s\le \tau  \right\}>0
\]
which implies that, at least for $s\le T^*$, we get
\begin{equation}\label{dis}
\sup_{s\in (0,t)}\|w(s)\|_{L^\si(\Omega)}^\si+\frac{1}{2c_0}\|w\|_{L^p(0,t;W^{1,p}_0(\Omega))}^p\le 0
\end{equation}
from \eqref{X}. Now, let us suppose by contradiction that $T^*<T$. Then, if $t=T^*$ and by definition of $T^*$ we would find $\|w(T^*)\|_{L^\si(\Omega)}^\si=\frac{1}{(2c_0)^\frac{\si}{\eta}}\le 0$ 
which is in contrast with the assumption $c_0>0$. 
We thus deduce that \eqref{dis} holds for all $t\le T$ 
and, in particular, we conclude that $\|w(t)\|_{L^\ga(\Omega)}\equiv 0$ for every $t\in [0,T]$.
\end{proof}

\medskip

During the proof of our main results, we need some   regularity results, as the next two Lemmata.

\begin{lemma}\label{lemgrad}
Let   $u\in C([0,T];L^{\si}(\Omega))$ be such that  \eqref{pot} holds true. Then
\begin{equation}\label{gradreg}
|\N u|\in L^{N(q-(p-1))+2q-p}(Q_T)\quad\text{ with }\,\ \,1\leq \si\le 2 \quad \bigg(\mbox{i.e. } \frac{N}{N+2}\leq p-q\leq \frac{N}{N+1}\bigg)
\end{equation}
and
\begin{equation}\label{on}
\frac{1}{n}\iint_{\{n<|u|<2n\}} |\N u|^p 
\,dx\,dt=o(n^{-(\si-1)})\qquad \text{as}\,\,n\to\infty.
\end{equation}
\end{lemma}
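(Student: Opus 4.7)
The strategy combines a weighted energy estimate derived from hypothesis \eqref{pot} with a Gagliardo--Nirenberg inequality and a H\"older interpolation. Since $p(\gamma-1)=\sigma-2$, a direct chain-rule computation shows that the pointwise bound
$$
\Bigl|\nabla\bigl((1+|u|)^{\gamma-1}u\bigr)\Bigr| \ge c(\gamma)\,(1+|u|)^{\gamma-1}|\nabla u|
$$
holds for some $c(\gamma)>0$, so that hypothesis \eqref{pot} yields the weighted energy bound
$\iint_{Q_T}(1+|u|)^{\sigma-2}|\nabla u|^p\,dx\,dt<\infty.$

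Second, the function $(1+|u|)^\gamma$ belongs to $L^p(0,T;W^{1,p}_0(\Omega))$ (its gradient has the same magnitude as that of $(1+|u|)^{\gamma-1}u$, up to a multiplicative constant) and to $L^\infty(0,T;L^{\sigma/\gamma}(\Omega))$ by the continuity assumption $u\in C([0,T];L^\sigma(\Omega))$. The hypothesis $q>p/2$ coming from \eqref{H1} ensures $\sigma/\gamma\le p^*$, so that \cref{teoGN} applies with $h=\sigma/\gamma$, $\eta=p$ and $j=y=p(N+h)/N$, giving
$$
(1+|u|)\in L^{p\gamma+p\sigma/N}(Q_T).
$$

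Setting $\mu=N(q-(p-1))+2q-p$, the assumption $\sigma\le 2$ is equivalent to $\mu\le p$. A short algebraic check, using $\sigma=N(q-p+1)/(p-q)$ and $\gamma=(\sigma+p-2)/p$, yields the key identities
$$
p-\mu=(N+2)(p-q)-N \q\t{and}\q \frac{\mu(2-\sigma)}{p-\mu}=\frac{\mu}{p-q}=p\gamma+\frac{p\sigma}{N}.
$$
This is the crucial matching: H\"older's inequality with exponents $p/\mu$ and $p/(p-\mu)$ applied to the factorization $|\nabla u|^\mu=\bigl[(1+|u|)^{(\sigma-2)/p}|\nabla u|\bigr]^\mu\cdot(1+|u|)^{-\mu(\sigma-2)/p}$ controls the first factor by the weighted energy estimate and the second by the Gagliardo--Nirenberg output, thus delivering \eqref{gradreg}. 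For \eqref{on}, the idea is dual: on the level set $\{n<|u|<2n\}$ one bounds $(1+|u|)^{2-\sigma}\le C\,n^{2-\sigma}$, obtaining
$$
n^{\sigma-1}\,\frac{1}{n}\iint_{\{n<|u|<2n\}}|\nabla u|^p\,dx\,dt \,\le\, C\iint_{\{n<|u|<2n\}}(1+|u|)^{\sigma-2}|\nabla u|^p\,dx\,dt,
$$
and the right-hand side vanishes as $n\to\infty$ by absolute continuity of the integral, since the integrand lies in $L^1(Q_T)$ by the first step and $|\{|u|>n\}|\to 0$ thanks to $u\in L^\infty(0,T;L^\sigma(\Omega))$. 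The main (essentially algebraic) obstacle is verifying the matching identity $\mu(2-\sigma)/(p-\mu)=p\gamma+p\sigma/N$ that makes the H\"older interpolation sharp precisely at the exponent $\mu$; everything else follows by standard manipulations.
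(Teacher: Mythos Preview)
Your proposal is correct and follows essentially the same route as the paper: extract the weighted energy bound $\iint (1+|u|)^{\sigma-2}|\nabla u|^p<\infty$ from \eqref{pot}, apply \cref{teoGN} with $\eta=p$, $h=\sigma/\gamma$ to obtain $(1+|u|)\in L^{p(N\gamma+\sigma)/N}(Q_T)$, and then interpolate. The only cosmetic difference is that the paper uses Young's inequality pointwise,
\[
|\nabla u|^{\mu}\le c\,\frac{|\nabla u|^p}{(1+|u|)^{p(\gamma-1)}}+c\,(1+|u|)^{p(N\gamma+\sigma)/N},
\]
whereas you use the dual H\"older splitting; your algebraic identity $\mu(2-\sigma)/(p-\mu)=p\gamma+p\sigma/N$ is exactly the matching that makes either version sharp. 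Your argument for \eqref{on} is identical to the paper's. One small slip: $(1+|u|)^{\gamma}$ does not belong to $W^{1,p}_0(\Omega)$ (it equals $1$ on $\partial\Omega$); apply \cref{teoGN} to $(1+|u|)^{\gamma}-1$ or directly to $(1+|u|)^{\gamma-1}u$ instead.
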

\begin{proof}
Let us start with the proof of the regularity in \eqref{gradreg}; using \eqref{pot} and \cref{teoGN}   with $\eta=p$ and $h=\frac{\si}{\ga}$ we deduce that $\ds u\in L^{p\frac{N\ga+\si}{N}}(Q_T) $. Then, by  Young's inequality,  we get 
\[
|\N u|^{p\frac{N\ga+\si}{N+\si}}\le c\frac{|\N u|^p}{(1+u)^{p(\ga-1)}}+c(1+u)^{p\frac{N\ga+\si}{N}}
\]
and \eqref{gradreg} follows by definitions of $\si$ and $\gamma$.

As far as \eqref{on} is concerned, we have the inequality
\[
\frac{n^{\si-1}}{n}\iint_{\{n<|u|<2n\}} |\N u|^p 
 \,dx\,dt\le c_\gamma \iint_{\{n<|u|<2n\}}
 |\N |u|^\ga |^p
 \,dx\,dt\,.
\]
Then, since $n\geq1$ and consequently dealing with $|u|>1$, it implies that we can employ \eqref{pot} and thus $\meas\{(t,x)\in(0,T)\times \Omega \,:\,   n<|u|\le 2n\}\to 0$ as $n\to \infty$ and thus   \eqref{on} follows.
\end{proof}

\begin{lemma}\label{lemmaq}
Let $u,\, v$ be respectively a renormalized subsolution and a renormalized supersolution of \eqref{Dp}   such that \eqref{ET}, \eqref{H1} with \eqref{q3} hold. Then
\begin{equation}\label{gradpos}
|\N u_+| ,\,|\N v_-| \in L^r(Q_T)\quad\mbox{for } \quad   1\leq r<p-\frac{N}{N+1}.
\end{equation}
\end{lemma}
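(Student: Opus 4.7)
The plan is to obtain the Boccardo--Gallou\"et-type truncation estimate $\iint_{Q_T}|\nabla T_k(u_+)|^p\,dx\,dt\le C k$ for every $k\ge 1$ and then to pass from it, via a Gagliardo--Nirenberg/Marcinkiewicz interpolation, to the desired gradient integrability $|\nabla u_+|\in L^r(Q_T)$ for all $1\le r<p-N/(N+1)$; the bound on $|\nabla v_-|$ will follow by a completely symmetric argument on the supersolution inequality for $v$.

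\emph{Step 1 (truncation energy bound).} In the renormalized subsolution inequality \eqref{renf} for $u$, I take $S=S_n$ as in \eqref{ttn} (so that $S'(u)=\theta_n(u)\ge0$ is compactly supported) and, as test function, a Landes time-regularization of the nonnegative function $T_k(u_+)$. After letting the time-regularization parameter tend to $0$ and integrating by parts in the spatial diffusion term, I arrive at
\begin{equation*}
\int_\Omega\Psi_{n,k}(u(T))\,dx+\alpha\iint_{Q_T}\theta_n(u)\,|\nabla T_k(u_+)|^p\,dx\,dt\le \int_\Omega\Psi_{n,k}(u_{0,+})\,dx+I_n+\iint_{Q_T}|H(t,x,\nabla u)|\,\theta_n(u)\,T_k(u_+)\,dx\,dt,
\end{equation*}
where $\Psi_{n,k}(s)=\int_0^s\theta_n(\tau)T_k(\tau_+)\,d\tau\le k s_+$, the coercivity of the diffusion follows from \eqref{a2} and \eqref{a4}, and the remainder
\begin{equation*}
I_n\ :=\ \bigg|\iint_{Q_T}\theta_n'(u)\,a(t,x,\nabla u)\cdot\nabla u\,T_k(u_+)\,dx\,dt\bigg|\le \frac{Ck}{n}\iint_{\{n<u<2n\}}\!\!\left(\ell^{p'}+|\nabla u|^p\right)dx\,dt
\end{equation*}
is controlled by \eqref{a3} and goes to zero as $n\to\infty$ thanks to \eqref{ET}. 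Since the renormalized definition guarantees $H(t,x,\nabla u)\in L^1(Q_T)$ and $u_0\in L^1(\Omega)$, the right-hand side is bounded by $Ck$; Fatou on the left-hand side then yields, for all $k\ge1$,
\begin{equation*}
\alpha\iint_{Q_T}|\nabla T_k(u_+)|^p\,dx\,dt\le C\,k,
\end{equation*}
with $C$ independent of $k$.

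\emph{Step 2 (Marcinkiewicz interpolation and the case of $v_-$).} Since $u\in C([0,T];L^1(\Omega))$ is part of the renormalized definition, we have $u_+\in L^\infty(0,T;L^1(\Omega))$, and \cref{teoGN} applied to $T_k(u_+)$ with $h=1$ and $\eta=p$ gives
\begin{equation*}
\|T_k(u_+)\|_{L^{p(N+1)/N}(Q_T)}^{p(N+1)/N}\le C\,k.
\end{equation*}
Chebyshev yields the Marcinkiewicz-type bound $\meas\{u_+>k\}\le C k^{-(p(N+1)-N)/N}$. Combining this with the truncation estimate of Step 1 through the standard split $\meas\{|\nabla u_+|>\lambda\}\le \lambda^{-p}\iint_{Q_T}|\nabla T_k(u_+)|^p\,dx\,dt+\meas\{u_+>k\}$ and optimising in $k$ (set $k=\lambda^{N/(N+1)}$) gives $\meas\{|\nabla u_+|>\lambda\}\le C\lambda^{-(p-N/(N+1))}$, hence $|\nabla u_+|\in L^r(Q_T)$ for every $1\le r<p-N/(N+1)$. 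The same scheme applies to the renormalized supersolution inequality for $v$: testing with a time-regularization of the nonnegative function $T_k(v_-)\ge 0$ and $S=S_n$, and using that on $\{-k<v<0\}$ one has $\nabla T_k(v_-)=-\nabla v$, the direction of the renormalized inequality ($\ge$) and the coercivity of $a$ combine to produce the bound $\iint_{Q_T}|\nabla T_k(v_-)|^p\,dx\,dt\le Ck$, from which the Marcinkiewicz interpolation concludes.

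\emph{Main obstacle.} The only serious technical point is Step 1: because the renormalized inequality is one-sided, only nonnegative test functions may be inserted, so $T_k(u_+)$ has to be used via a Landes time-regularization, and the monotonicity passage to the limit in that regularization must be carried out simultaneously with the control of the $S''$-term $I_n$ via \eqref{ET}, preserving the direction of the inequality at every step. Once this bound is available, the rest is a routine Boccardo--Gallou\"et interpolation.
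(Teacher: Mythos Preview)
Your proof is correct and follows essentially the same route as the paper's: the paper also derives the truncation estimate $\iint_{Q_T}|\nabla T_k(u_+)|^p\,dx\,dt\le k\big(\|H(\cdot,\cdot,\nabla u)\|_{L^1(Q_T)}+\|u_0\|_{L^1(\Omega)}\big)$ from the renormalized inequality and then invokes the classical Boccardo--Gallou\"et regularity to conclude $|\nabla u_+|\in L^r(Q_T)$ for every $r<p-\tfrac{N}{N+1}$. The only cosmetic difference is that the paper passes to $v_-$ by observing that $-v$ is a subsolution of the companion problem with operator $\tilde a(t,x,\xi)=-a(t,x,-\xi)$ and Hamiltonian $\tilde H(t,x,\xi)=-H(t,x,-\xi)$ (so that $(-v)_+=v_-$ inherits the same estimate), whereas you test the supersolution inequality directly with $T_k(v_-)$; both are equivalent.
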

\begin{proof}
We only deal with the case of subsolution $u$, since having $v$ be a supersolution implies that $-v$ is a subsolution.

In fact, \eqref{gradpos}   is a consequence of Corollary 4.7 (see also Remark 4.2) applied to the positive part of $u$, that yields to the following inequality

\begin{equation*}
\begin{array}{c}
\ds
\integrale \Theta( u_+ (T) )\,dx +\alpha \iint_{Q_T}|\N T_k (\up) |^p\,dx\,dt
\ds \le k \ds \left[\|H(t,x, \N u) \|_{L^1(Q_T)}+\|u_0\|_{L^1(\Omega)} \right]\,.
\end{array}
\end{equation*}
By the standard results on the regularity, we deduce that $\up\in L^r(0,T;W^{1,r}(\Omega))$ for every $r<p-\frac{N}{N+1}$  (see   \cite{ST} and \cite{BDGO}).
\end{proof}

We conclude this Section with another useful result.

\begin{lemma}\label{leml1}
Let $\ro,\,m\ge 1$ and $w\in \mathcal{T}^{1,\ro}_0(Q_T)\cap L^\infty(0,T;L^m(\Omega)) $ satisfy
\begin{equation}\label{gr1}
\|w\|_{L^\infty(0,T;L^m(\Omega))}^m\le L
\qquad 
\mbox{ and } \qquad 
\iint_{Q_T}\frac{|\N w|^\ro}{(|w|+\mu)^\ga}\,dx\,dt\le M\mu^{-\nu}
\end{equation}
where $L, 	\ M,\,\mu,\,\nu>0$ and $0<\ga<\ro$. Then  
\begin{equation}\label{gradb}
\exists c=c(\rho, \gamma, N,\mu,m) \,:\qquad \||\N w|^{b}\|_{L^1(Q_T)}\le c \left(L^{\frac{\ga-\nu}{N+m}}M\right)^{\frac{b(N+m)}{N(\ro-\ga+\nu)+m\ro}} \quad\mbox{with}\quad b=\frac{N(\ro-\ga)+m\ro}{N+m}.
\end{equation}
\end{lemma}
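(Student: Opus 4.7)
The plan is to combine a single H\"older decomposition of $|\N w|^b$ with a Gagliardo--Nirenberg estimate applied to a suitable power of $(|w|+\mu)$. Since $0<\gamma<\rho$, a direct check gives $b<\rho$, together with the algebraic identity $\rho-b=N\gamma/(N+m)$, which in turn produces $b\gamma/(\rho-b)=b(N+m)/N$. Writing
$$|\N w|^b=\Big(\frac{|\N w|^\rho}{(|w|+\mu)^\gamma}\Big)^{b/\rho}(|w|+\mu)^{b\gamma/\rho}$$
and applying H\"older's inequality with conjugate exponents $\rho/b$ and $\rho/(\rho-b)$ yields
$$\iint_{Q_T}|\N w|^b\,dx\,dt\le(M\mu^{-\nu})^{b/\rho}\left(\iint_{Q_T}(|w|+\mu)^{b(N+m)/N}\,dx\,dt\right)^{N\gamma/((N+m)\rho)},$$
the first factor being controlled by the second bound in \eqref{gr1}.

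To control the second factor I would introduce the auxiliary function $W:=(|w|+\mu)^\delta-\mu^\delta$ with $\delta=(\rho-\gamma)/\rho$, which belongs to $W^{1,\rho}_0(\Omega)$ pointwise in $t$. A chain-rule computation yields $|\N W|^\rho=\delta^\rho|\N w|^\rho/(|w|+\mu)^\gamma$, hence $\iint_{Q_T}|\N W|^\rho\le\delta^\rho M\mu^{-\nu}$, while the first bound in \eqref{gr1} gives $\|W\|_{L^\infty(0,T;L^{m/\delta}(\Omega))}\le c(L^{1/m}+\mu)^\delta$. Applying \cref{teoGN} to $W$ with $\eta=\rho$, $h=m/\delta$ and the $y=j$ branch of the Gagliardo--Nirenberg curve (so $j=\rho(N\delta+m)/(N\delta)$) produces
$$\iint_{Q_T}W^{\rho(N\delta+m)/(N\delta)}\,dx\,dt\le c\,\|W\|_{L^\infty(0,T;L^{m/\delta}(\Omega))}^{\rho m/(N\delta)}\iint_{Q_T}|\N W|^\rho\,dx\,dt.$$
Since $\delta\cdot\rho(N\delta+m)/(N\delta)=b(N+m)/N$, undoing the change $W\mapsto(|w|+\mu)^\delta-\mu^\delta$ produces
$$\iint_{Q_T}(|w|+\mu)^{b(N+m)/N}\,dx\,dt\le c(L^{1/m}+\mu)^{\rho m/N}M\mu^{-\nu}+c\mu^{b(N+m)/N}|Q_T|.$$

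Plugging this into the H\"older estimate above and using $(a+b)^\theta\le c(a^\theta+b^\theta)$ for $\theta=(\rho-b)/\rho<1$ gives, after the simplifications $(\rho-b)/N=\gamma/(N+m)$ and $b(N+m)(\rho-b)/(N\rho)=b\gamma/\rho$, the two-term estimate
$$\iint_{Q_T}|\N w|^b\,dx\,dt\le c\,M\mu^{-\nu}L^{\gamma/(N+m)}+c\,M^{b/\rho}\mu^{b(\gamma-\nu)/\rho}.$$
The claimed form $\big(L^{(\gamma-\nu)/(N+m)}M\big)^{b(N+m)/(N(\rho-\gamma+\nu)+m\rho)}$ is then recovered by balancing the two summands with the interpolation weight $\alpha:=b(N+m)/(N(\rho-\gamma+\nu)+m\rho)$; a direct check (using $b(N+m)=N(\rho-\gamma)+m\rho$) shows that for this choice the $\mu$-exponent $-\nu\alpha+(1-\alpha)b(\gamma-\nu)/\rho$ vanishes exactly, while the remaining $L$- and $M$-powers combine into the claimed quantity.

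The main obstacle is precisely this last balancing step: a two-term bound $E\le A+B$ does not automatically yield $E\le A^\alpha B^{1-\alpha}$, so one has to exploit the freedom in $\mu$ in \eqref{gr1} (choosing the $\mu$ that makes $A(\mu)=B(\mu)$) and absorb any residual $\mu$-dependence into the constant $c=c(\rho,\gamma,N,\mu,m)$ as stated. All three matching conditions---the $L$-exponent, the $M$-exponent and the vanishing $\mu$-power---close simultaneously thanks to the single identity $b(N+m)=N(\rho-\gamma)+m\rho$ together with the definition of $\alpha$.
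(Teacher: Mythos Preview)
Your approach is essentially the same as the paper's: the same H\"older split with exponents $(\rho/b,\rho/(\rho-b))$, the same auxiliary function $W=(|w|+\mu)^{(\rho-\gamma)/\rho}-\mu^{(\rho-\gamma)/\rho}$ fed into the Gagliardo--Nirenberg inequality, and the same optimisation over $\mu$ at the end. The only cosmetic difference is the order: the paper applies Gagliardo--Nirenberg first, obtains the two-term bound
\[
\iint_{Q_T}(|w|+\mu)^{b(N+m)/N}\,dx\,dt\le c\Big(\mu^{(N(\rho-\gamma)+m\rho)/N}+L^{\rho/N}M\mu^{-\nu}\Big),
\]
minimises in $\mu$ (the minimum is at $\mu=c(L^{\rho/N}M)^{N/(N(\rho-\gamma+\nu)+m\rho)}$, which coincides with your balancing choice $A(\mu)=B(\mu)$), and only then applies H\"older. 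Your worry about the last step is therefore unfounded: the paper resolves it exactly as you suggest, by choosing $\mu$ optimally, and your algebraic verifications of the exponents are correct.
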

\begin{proof}
The above assumptions on $w$ imply that we can apply \cref{teoGN} to the function
\[
\left((|T_k(w)|+\mu)^{\frac{\ro-\ga}{\ro}}-\mu^{\frac{\ro-\ga}{\ro}}\right)\in 
L^\infty(0,T;L^{\frac{m\ro}{\ro-\ga}}(\Omega))\cap L^\ro(0,T;W^{1,\ro}_0(\Omega))
\]
and we get the regularity estimate
\begin{equation}\label{regk}
\begin{array}{c}
\ds
\iint_{Q_T}\left((|T_k(w)|+\mu)^{\frac{\ro-\ga}{\ro}}-\mu^{\frac{\ro-\ga}{\ro}}\right)^{\ro\frac{N+\frac{m\ro}{\ro-\ga}}{N}}\,dx\,dt\\
[4mm]\ds
\le \bigg\|(|T_k(w)|+\mu)^{\frac{\ro-\ga}{\ro}}-\mu^{\frac{\ro-\ga}{\ro}}\bigg\|_{L^\infty(0,T;L^{\frac{m\ro}{\ro-\ga}}(\Omega))}^{\frac{m\ro^2}{N(\ro-\ga)}}
\iint_{Q_T}\frac{|\N T_k(w)|^\ro}{(|T_k(w)|+\mu)^\ga}\,dx\,dt.
\end{array}
\end{equation}
Then, since the inequality $(a+b)^\al \leq  a^\al+b^\al$ holds for $a,\,b>0$,  $0<\al<1$ and   $\frac{\ro-\ga}{\ro}<1$, we have by \eqref{regk},  combined with \eqref{gr1},   and Fatou's Lemma   
\begin{equation*}
\begin{array}{c}
\ds
\iint_{Q_T}\left((|w|+\mu)^{\frac{\ro-\ga}{\ro}}\right)^{\ro\frac{N+\frac{m\ro}{\ro-\ga}}{N}}\,dx\,dt
\le 
c\left(
\mu^{\frac{N(\ro-\ga)+m\ro}{N}}+L^{\frac{\ro}{N}}M\mu^{-\nu}
\right).
\end{array}
\end{equation*}
Minimizing the right hand side above with respect to  $\mu$,  we get that the minimum is achieved at 
$\mu=c\left(L^{\frac{\ro}{N}}M\right)^{\frac{N}{N(\ro-\ga+\nu)+m\ro}}$
for a constant $c$ that depends only on  $\ro,\,\ga,\,N,\,\nu,\,m$. \\
We are now ready to prove \eqref{gradb}: we use the H\"older's inequality with $(\frac{\ro}{b},\frac{\ro}{\ro-b})$ in order to get 
\begin{equation*}
\iint_{Q_T}|\N w|^b\,dx\,dt\le \left(\iint_{Q_T}\frac{|\N w|^\ro}{(|w|+\mu)^\ga}\,dx\,dt\right)^{\frac{b}{\ro}}\left(\iint_{Q_T}(|w|+\mu)^{\frac{\ga b}{\ro-b}}\,dx\,dt\right)^{\frac{\ro-b}{\ro}}
\end{equation*}
and we use that  $b$ satisfies $\frac{\ga b}{\ro-b}= \frac{N(\ro-\ga)+m\ro}{N}$, so that   \eqref{gradb} follows from the choice of $\mu$.
\end{proof}

\section{Proofs in the case    $1<p\le 2$}
\sectionmark{The case with $p=2$}

\setcounter{equation}{0}
\renewcommand{\theequation}{\thesection.\arabic{equation}}

\numberwithin{equation}{section}

We start by proving   \cref{teorin2} and   \cref{thm25}. 

\begin{proof}[Proof of \cref{teorin2} and of  \cref{thm25}]
As anticipated, we need to rewrite the inequalities satisfied by sub/supersolutions in terms of their bounded parts plus some (quantified) reminder. To this aim, we set
 \begin{equation}\label{unvn}
\begin{array}{c}
\ds u_n=S_n(u)\quad \text{and}\quad v_n=S_n(v)
\end{array}
\end{equation}
where $S_n(\cdot)$ has been defined in \eqref{ttn}.
We consider the renormalized formulation in \eqref{renf} with $S(u)=\theta_n(u)$ so we obtain
\begin{equation*}
\begin{array}{c}
\ds
\int_\Omega u_n(t)\varphi(t)\,dx+\iint_{Q_t} a(s,x,\N u)\tenu\cdot \N\vp+a(s,x,\N u)\cdot \N u \,\te'_n(u)\vp\,dx\,ds\\
[4mm]\ds
\le \iint_{Q_t} H(s,x,\N u)\tenu\varphi\,dx\,ds+\int_\Omega u_n(0)\varphi(0)\,dx.
\end{array}
\end{equation*}
Reasoning in the same way on the supersolution $v$ - of course, with $S(v) = \tenv $ - and considering the difference between the above inequalities, we get $\forall 0<t\leq T$,  
\begin{equation*}
\begin{array}{c}
\ds
\int_\Omega (u_n(t)-v_n(t))\varphi(t)\,dx\\
[4mm]\ds+\iint_{Q_t} \left[a(s,x,\N u)\tenu-a(s,x,\N v)\tenv\right]\cdot \N\vp+\left[a(s,x,\N u)\cdot \N u\,\te'_n(u)-a(s,x,\N v)\cdot \N v\,\te'_n(v)\right]\,\vp\,dx\,ds\\
[4mm]\ds
\le \iint_{Q_t}\left[H(s,x,\N u)\tenu-H(s,x,\N v)\tenv\right]\varphi\,dx\,ds+\int_\Omega [u_n(0)-v_n(0)]\varphi(0)\,dx.
\end{array}
\end{equation*}
We now define
\begin{equation}\label{zn}
\zn=\un-\vn
\end{equation}
and rewrite the above inequality as
\begin{equation}\label{zetaeps2}
\begin{array}{c}
\ds
\int_\Omega \zn(t)\varphi(t)\,dx+\iint_{Q_t} \left[a(s,x,\N \un)-a(s,x,\N \vn)\right]\cdot \N\vp\,dx\,ds\\
[4mm]\ds
\le \iint_{Q_t} \left[H(s,x,\N \un)-H(s,x,\N \vn)\right]\varphi\,dx\,ds
+\Rn
\end{array}
\end{equation}
where
\begin{equation*}
\begin{split}
\ds\Rn&=\iint_{Q_t}\left[ a(s,x,\N \un)- a(s,x,\N \vn)
-a(s,x,\N u)\tenu+a(s,x,\N v)\tenv\right]\cdot \N\vp
\,dx\,ds\\
&\quad-\iint_{Q_t} \left[a(s,x,\N u)\cdot \N u\,\te'_n(u)-a(s,x,\N v)\cdot \N v\,\te'_n(v)\right]\,\vp\,dx\,ds
\\
&\quad
+\iint_{Q_t} \left[H(s,x,\N u)\tenu-H(s,x,\N v)\tenv- H(s,x,\N \un)+H(s,x,\N \vn)\right]\varphi\,dx\,ds\\
&\quad+\int_\Omega \zn(0)\varphi(0)\,dx.
\end{split}
\end{equation*}
 In virtue of the density result  \cite[Proposition $4.2$]{PPP}, we are allowed to take 
\[
\vp(\zn)=\left[(G_k(\zn)+\mu)^{\si-1}\right]-\mu^{\si-1},\quad \mu>0,
\]
in the inequality in \eqref{zetaeps2} and then, thanks also to \eqref{a2} and \eqref{H3}, we obtain
\begin{equation} \label{inn2}
\begin{array}{c}
\ds
\integrale \Vp_k (\zn(t))\,dx
+\al(\si-1)\iint_{Q_t}\left||\N \un|^{2}+|\N \vn|^{2}\right|^{\frac{p-2}{2}}\frac{|\N G_k(\zn)|^2}{(G_k(\zn)+\mu)^{2-\si}}\,dx\,ds\\
[4mm]\ds
\le c_1 \iint_{Q_t} |\N G_k(\zn )|\left[
g+|\N \un|^{q-1}+|\N \vn|^{q-1}
\right]\left(\left[(G_k(\zn)+\mu)^{\si-1}\right]-\mu^{\si-1}\right)\,dx\,ds +\Rn
\end{array}
\end{equation}
where $  \Vp_k (z )\,dx=\int_0^{G_k(z)}\vp(\tau)\,d\tau$.

\medskip 

We want to prove now  that $\dys \lim_{n\to\infty}\Rn=0$. From now on, we denote by  $\om_n $ any quantity that vanishes as $n$ diverges, and we  set
\begin{equation}\label{omn}
\Rn=I_1+I_2+I_3+I_4
\end{equation}
where
\begin{equation}\label{I}
\begin{array}{lll}
\ds I_1&\ds =\iint_{Q_t}\big[\left[a(s,x,\N \un)- a(s,x,\N \vn)\right] -
\left[\tenu a(s,x,\N u)-\tenv a(s,x,\N v)\right]\big]\cdot \N \zn\vp'(\zn)\,dx\,ds,&\\
[4mm]
\ds I_2&\ds =-\iint_{Q_t} \left[\te'_n(u) a(s,x,\N u)\cdot \N u-\te'_n(v)a(s,x,\N v)\cdot \N v\right]\vp(\zn)\,dx\,ds,&\\
[4mm]
\ds I_3&\ds =\iint_{Q_t} \big[\left[\tenu H(s,x,\N  u)-\tenv H(s,x,\N v)\right]-\left[H(s,x,\N \un)-H(s,x,\N \vn)\right]\big]\vp(\zn)\,dx\,ds&\\
\ds I_4&\ds =\int_\Omega \zn(0)\varphi(0)\,dx.&
\end{array}
\end{equation}
Let us start by studying   $I_1$. The definition of $\te_n(\cdot)$ and \eqref{a4} imply that
\begin{equation}\label{I1}
\begin{split}
I_1&=\iint_{\{n<|u|<2n\}} \big[a(s,x,\N \un)-\tenu a(s,x,\N u)\big]\cdot \N (u_n-v_n)\vp'(u_n-v_n)\,dx\,ds\\
&\quad-\iint_{\{n<|v|<2n\}}\big[a(s,x,\N \vn))-\tenv a(s,x,\N v)\big]\cdot \N (u_n-v_n)\vp'(u_n-v_n)\,dx\,ds
\end{split}
\end{equation}
being $a(s,x,\N u)\theta_n(u)-a(s,x,\N u_n)\equiv0$ when $|u|\le n$ and since $\N u_n  \equiv 0$ when $|u|\ge 2n$ (and the same hols for $v_n$). We just prove that the first integral in \eqref{I1} behaves as $\omn$ since the second one can be dealt in the same way. The definition of $\vp(\cdot)$ and \eqref{a3} allow us to deduce that
\begin{equation}\label{I11}
\begin{array}{c}
\ds
\iint_{\{n<|u|<2n\}}\big[a(s,x,\N \un)-\tenu a(s,x,\N u)\big]\cdot \N (u_n-v_n)\vp'(u_n-v_n)\,dx\,ds\\
[4mm]\ds
\le cn^{-(2-\si)}\bigg[
\iint_{\{n<|u|<2n\}}|\N u|^p\,dx\,ds+\iint_{\{n<|u|<2n\}}|\N u|\ell\,dx\,ds\\
[4mm]\ds+
\iint_{\{n<|u|<2n,\,|v|<2n\}}|\N u|^{p-1}|\N v|\,dx\,ds+\iint_{\{n<|v|<2n\}}|\N v|\ell\,dx\,ds
\bigg]\,,
\end{array}
\end{equation}
and thanks to  \eqref{pot} we estimate the first integral in the right hand side above, since 
\[
n^{-(2-\si)}\iint_{\{n<|u|<2n\}}|\N u|^p\,dx\,ds\le c\iint_{\{n<|u|<2n\}}|\N |u|^\ga|^p\,dx\,ds=\om_n \,,
\]
using that  $|\{(t,x)\in(0,T)\times \Omega \,:\,   n<|u|<2n\}|\to 0$ as $n\to \infty$.
As far as the third integral is concerned, H\"older's inequality with $(p,p')$ implies
\[
\begin{array}{c}
\ds
n^{-(2-\si)}\iint_{\{n<|u|<2n,\,|v|<2n\}}|\N u|^{p-1}|\N v|\,dx\,ds\\
[4mm]\ds
\le cn^{-(2-\si)}\left(\iint_{\{n<|u|<2n\}}|\N u|^p\,dx\,ds\right)^{\frac{1}{p'}}\left(
\iint_{\{|v|<2n\}}|\N v|^p\,dx\,ds
\right)^{\frac{1}{p}}\\
[4mm]\ds
\le \left(\iint_{\{n<|u|<2n\}}|\N |u|^\ga|^p\,dx\,ds\right)^{\frac{1}{p'}}\left(
\iint_{Q_T }|\N |v|^\ga|^p\,dx\,ds
\right)^{\frac{1}{p}}
=\om_n \,,
\end{array}
\]
thanks again to \eqref{pot}. Finally, we deal with the second term in \eqref{I11} (the fourth is treated in the same way) applying again H\"older's inequality with $(p,p')$ and so obtaining
\[
\begin{array}{c}
\ds cn^{-(2-\si)}\iint_{\{n<|u|<2n\}}|\N u|\ell\,dx\,ds \\
[4mm]\ds
\le cn^{-(2-\si)}\left(\iint_{\{n<|u|<2n\}}|\N u|^p\,dx\,ds\right)^{\frac{1}{p}}\left(
\iint_{\{n<|u|<2n\}}|\ell|^{p'}\,dx\,ds
\right)^{\frac{1}{p'}}\\
[4mm]\ds
\le \left(\iint_{\{n<|u|<2n\}}|\N |u|^\ga|^p\,dx\,ds\right)^{\frac{1}{p}}\left(n^{-(2-\si)}
\iint_{Q_T}|\ell|^{p'}\,dx\,ds
\right)^{\frac{1}{p'}} =\om_n .
\end{array}
\]
A similar argument can be done for the last term in \eqref{I1}, so that  we have that $I_1=\om_n $.\\

By the definition of $\te_n(\cdot)$ and since $|\vp(u_n-v_n)|\le cn^{\si-1}$,   $I_2$   can be estimated as 
\[
I_2\le \frac{c}{n^{2-\si}}\left[\iint_{\{n<|u|<2n\}} |a(s,x,\N u)|| \N u|\,dx\,ds+
\iint_{\{n<|v|<2n\}}|a(s,x,\N v)|| \N v|\,dx\,ds\right]\,,
\]
and $I_2=\om_n $ thanks to \eqref{a3} and \cref{lemgrad} (see \eqref{on}).\\

As far as   $I_3$   is concerned, we have that
\begin{align*}
|I_3|&\le cn^{\si-1}\biggl[\iint_{\{n<|u|<2n\}} \left|\tenu H(s,x,\N  u)-H(s,x,\N \un)\right|\,dx\,ds\\
&\quad +\iint_{\{n<|v|<2n\}} \left|\tenv H(s,x,\N  v)-H(s,x,\N \vn)\right|\,dx\,ds\biggr]
\end{align*}
by definition of $\te_n(\cdot)$: indeed, $\tenu H(s,x,\N  u)-H(s,x,\N \un)=0$ if $|u|\le n$ and $|\N \un|=0$ if $|u|\ge 2n$. We only consider the first term in the inequality above since the second one can be dealt with in the same way. Thanks to the growth assumption \eqref{H111}, the desired convergence of the first term follows once we prove that
\[
\begin{array}{c}\dys 
cn^{\si-1}\iint_{\{n<|u|<2n\}} \left|\tenu H(s,x,\N  u)-H(s,x,\N \un)\right|\,dx\,ds\\
[4mm]\ds
\le c\left[ \iint_{\{n<|u|<2n\}}|\N u|^q|u|^{\sigma-1}\,dx\,ds+\iint_{\{n<|u|<2n\}}|f||u|^{\sigma-1}\,dx\,ds\right] =\omn.
\end{array}
\]
An application of H\"older's inequality with indices $\left(\frac{p}{q},\frac{p^*}{p-q},\frac{N}{p-q}\right)$, Sobolev's embedding and \cref{teoGN} (see \eqref{disGN}) lead us to
\begin{equation}\label{grad}
\begin{split}
\iint_{\{n<|u|<2n\}}|\N u|^q|u|^{\sigma-1}\,dx\,ds&\le  \int_0^t \left(\int_{\{n<|u(s)|\le 2n \}}|\N |u(s)|^\ga |^p\,dx\right)^{\frac{q}{p}}\left(\int_{\{n<|u(s)|\le 2n \}}|u(s)|^{\si-1+\frac{q}{p-q}}\,dx\right)^{\frac{p-q}{p}}\,ds\\
&\le c\sup_{s\in [0,t]}\left(\integrale|u(s)|^{\si}\,dx\right)^{\frac{p-q}{N}}\iint_{\{n<|u|<2n \}}|\N |u|^\ga|^p\,dx\,ds=\om_n 
\end{split}
\end{equation}
for the same reasons given above. As far as the integral involving the forcing term is concerned, we observe that, applying the H\"older inequality with indices $(m,m')$ and $(r,r')$ as in \eqref{F1}, we get
\begin{align*}
\iint_{\{n<|u|<2n\}}|f||u|^{\sigma-1}\,dx\,ds\le 
\|f \|_{L^r(0,T;L^m(\Omega))}
\left[\int_0^t\left( \int_{\{n<|u(s)|\le 2n\}}  |u(s)|^{\ga\frac{m'(\sigma-1)}{\ga}}\,dx\right)^{\frac{r'}{m'}}\,ds\right]^{\frac{1}{r'}}.
\end{align*} 
We go further invoking \cref{teoGN} with $w=|u|^{\ga}$ and, in particular, the inequality in \eqref{disGN} becomes
\begin{equation}\label{frn}
\int_0^t\left( \int_{\{n<|u(s)|\le 2n\}} |u(s)|^{\ga m}\,dx\right)^{\frac{y}{m}}\,ds\le {c_{GN}}\|u\|_{L^{\infty}(0,T;L^{\si}(\Omega))}^{\ga(y-p)}
\iint_{\{n<|u|<2n\}}  |\N |u|^\ga|^p\,dx\,ds.
\end{equation}
We observe that since the couple $(m,r)$ satisfies \eqref{F1}, we have that the pair  $(j,y)$ fulfills
\[
j\ge m'\frac{\sigma-1}{\ga}\quad\text{and}\quad y\ge r'\frac{\sigma-1}{\ga}.
\]
We thus proceed through Lebesgue spaces inclusion and we deduce 
\begin{equation*}
\begin{array}{c}
\ds
\| f \|_{L^r(0,T;L^m(\Omega))}
\left[\int_0^t\left( \int_{\{n<|u(s)|\le 2n\}} |u(s)|^{\ga w}\,dx\right)^{\frac{y}{w}}\,ds\right]^{\frac{\si-1}{\ga y}}\\
[4mm]\ds
\le c\|f \|_{L^r(0,T;L^m(\Omega))}\left(
\|u\|_{L^{\infty}(0,T;L^{\si}(\Omega))}^{\ga(y-p)}
\iint_{\{n<|u|<2n\}}  |\N |u|^\ga|^p\,dx\,ds
\right)^{\frac{\si-1}{\ga y}}=\omn, 
\end{array}
\end{equation*}
that implies $I_3= \omn.$ Finally,  recalling that $u_0\le v_0$ and the definition of $\te_n(\cdot)$, we conclude that also $I_4 \leq  \omn.$ so that that $\Rn=\omn$.


\medskip

We now get into the main step of the proof. Let  $E_{n,k}$ be the subset of $Q_t$ defined by 
\begin{equation}\label{En1}
E_{n,k}=\left\{ (t,x)\in Q_t:\,\, z_n>k\quad\text{and}\quad |\N z_n|>0  \right\},
\end{equation}
and we set 
\begin{equation}\label{BBn}
\begin{array}{c}
\ds
 \B_{n,1}=
\left[g+
|\N \un|^{q-1}+|\N \vn|^{q-1}\right]\kin
,\qquad 
 \B_{n,2}=
\left[
|\N \un|^{2}+|\N \vn|^{2}
\right]^{\frac{a(2-p)}{2(2-a)}}\kin\,,
\end{array}
\end{equation}
where the parameter $a\le p\le 2$ has to be fixed. 

An application of H\"older's inequality with indices $(\frac{2}{a},\frac{2}{2-a})$ and the inequality in \eqref{inn2} provide with the following estimate:
\begin{equation}\label{s}
\begin{array}{c}
\ds
\iint_{Q_t}\frac{|\N G_k(\zn)|^a}{\left(G_k(\zn)+\mu\right)^{\frac{a}{2}(2-\si)}}
\,dx\,ds\\
[4mm]\ds
\le
\left(\iint_{E_{n,k}} \B_{n,2}\,dx\,ds
\right)^{\frac{2-a}{2}}
\left(
\iint_{E_{n,k}}\frac{|\N G_k(\zn)|^2}{\left(G_k(\zn)+\mu\right)^{2-\si}}
\bigg[
|\N {u}_n|^{2}+|\N {v}_n|^{2}
\bigg]^{\frac{p-2}{2}}\,dx\,ds
\right)^{\frac{a}{2}}
\\
[4mm]\ds
\le c\left(\iint_{E_{n,k}} \B_{n,2}\,dx\,ds
\right)^{\frac{2-a}{2}} \Biggl(
\iint_{Q_t}\left|\N G_k(\zn)\right|\B_{n,1}
\left[\left(G_k(\zn)+\mu\right)^{\si-1}-\mu^{\si-1}\right]\,dx\,ds+\omn
\Biggr)^{\frac{a}{2}}.
\end{array}
\end{equation}
We recall \eqref{h1} (i.e. we know that $g\in L^d(\Omega)$ for $d=\frac{N(q-(p-1))-p+2q}{q-1}$) and we set $a$ such that $\frac{a(2-p)}{2-a}=d(q-1)=N(q-(p-1))+2q-p$, namely
\[
a=\frac{2d(q-1)}{2-p+d(q-1)}=\frac{2\left(N(q-(p-1))-p+2q\right)}{(N+2)(q-(p-1))}.
\] 
Then, the gradient regularity \eqref{gradreg} contained in \cref{lemgrad} applied on both $|\N\un|$ and $|\N\vn|$ (we recall also \eqref{pot}) implies that the first integral in the right hand side  of \eqref{s} is finite.\\
Now, let us focus on the second one. H\"older's inequality with   indices $\left(a,d,\frac{2}{\si}\frac{p(N\ga+\si)}{N}\right)$ (indeed, $1-\frac{1}{a}-\frac{1}{d}=\frac1{2} \frac{\si(p-q)}{\si(p-q)-p+2q}=\frac{2}{\si}\frac{p(N\ga+\si)}{N}$ by definitions of $\si$ and $\ga$) yields to
\begin{equation}\label{q-1}
\begin{array}{c}
\ds
\iint_{Q_t}\left|\N G_k(\zn)\right|\B_{n,1}
\left[\left(G_k(\zn)+\mu\right)^{\si-1}-\mu^{\si-1}\right]\,dx\,ds\\
[4mm]\ds\le
\iint_{Q_t} \biggl[\left|\N G_k(\zn)\right|\left(G_k(\zn)+\mu\right)^{\frac{\si-2}{2}}\B_{n,1}\left[\left(G_k(\zn)+\mu\right)^{\si-1}-\mu^{\si-1}\right]^{\frac{1}{2}} \left(G_k(\zn)+\mu\right)^{\frac{1}{2}}\biggr]\,dx\,ds
\\
[4mm]\ds
\le \left(\iint_{Q_t}\frac{|\N G_k(\zn)|^a}{\left(G_k(\zn)+\mu\right)^{\frac{a}{2}(2-\si)}}\,dx\,ds\right)^{\frac{1}{a}}\left(\iint_{E_{n,k}}  \B_{n,1}^d \,dx\,ds
\right)^{\frac{1}{d}}\times\\  
[4mm]\ds
\times\left(\iint_{Q_t}
\left(\left[\left(G_k(\zn)+\mu\right)^{\si-1}-\mu^{\si-1}\right]^{\frac{1}{2}}\left(G_k(\zn)+\mu\right)^{\frac{1}{2}}\right)^{\frac{2p(N\ga+\si)}{N\si}}\,dx\,ds
\right)^{\frac{N\si}{2p(N\ga+\si)}}
\end{array}
\end{equation}
and then we take advantage of \eqref{q-1} in \eqref{s}, we obtain
\begin{equation}\label{gnn}
\begin{array}{c}
\ds
\iint_{Q_t}\frac{|\N G_k(\zn)|^a}{\left(G_k(\zn)+\mu\right)^{\frac{a}{2}(2-\si)}}\,dx\,ds\\
[4mm]\ds
\le c\left(\iint_{E_{n,k}}  \B_{n,1}^d \,dx\,ds
\right)^{\frac{a}{d}}\left(\iint_{E_{n,k}} \B_{n,2}\,dx\,ds
\right)^{2-a}\times
\\
[4mm]\ds
\times\left(\iint_{Q_t}
\left(\left[\left(G_k(\zn)+\mu\right)^{\si-1}-\mu^{\si-1}\right]^{\frac{1}{2}}\left(G_k(\zn)+\mu\right)^{\frac{1}{2}}\right)^{\frac{2p(N\ga+\si)}{N\si}}
\,dx\,ds\right)^{\frac{N\si}{2p(N\ga+\si)}a}+\omn
.
\end{array}
\end{equation}
We observe  that   the first two integrals in the right hand side above are bounded thanks to \eqref{h1}, the definition of $a$ and \cref{lemgrad}. Moreover, using \eqref{gnn} in \eqref{q-1} leads to
\begin{equation}\label{q-2}
\begin{array}{c}
\ds
\iint_{Q_t}\left|\N G_k(\zn)\right|\B_{n,1}
\left[\left(G_k(\zn)+\mu\right)^{\si-1}-\mu^{\si-1}\right]\,dx\,ds\\
[4mm]\ds
 \le c\left(\iint_{E_{n,k}}  \B_{n,1}^d \,dx\,ds
\right)^{\frac{2}{d}}\left(\iint_{E_{n,k}} \B_{n,2}\,dx\,ds
\right)^{\frac{2-a}{a}}\times
\\
[4mm]\ds
\times\left(\iint_{Q_t}
\left(\left[\left(G_k(\zn)+\mu\right)^{\si-1}-\mu^{\si-1}\right]^{\frac{1}{2}}\left(G_k(\zn)+\mu\right)^{\frac{1}{2}}\right)^{\frac{2p(N\ga+\si)}{N\si}}
\,dx\,ds\right)^{\frac{N\si}{p(N\ga+\si)}}+\omn
\end{array}
\end{equation}
and the uniform estimate of the right hand side  in \eqref{inn2} is closed.

\medskip 

Next,  we observe that the definitions of $a$, $\ga$ and $\si$ imply that
\[
\frac{2}{\si}\frac{p(N\ga+\si)}{N}=a\frac{N+2}{N}
\]
where $a\frac{N+2}{N}$ is the Gagliardo-Nirenberg regularity exponent applied with spaces
 \[
 L^\infty(0,T;L^2(\Omega))\cap L^a(0,T;W^{1,a}_0(\Omega)).
\]
In particular, the inequality in \eqref{disGN} applied to the   the function $$\left[\left(G_k(\zn)+\mu\right)^{\si-1}-\mu^{\si-1}\right]^{\frac{1}{2}}\left(G_k(\zn)+\mu\right)^{\frac{1}{2}}$$
gives 
\begin{equation}\label{gnnn}
\begin{array}{c}
\ds
\iint_{Q_t}\left(\left[\left (G_k(\zn) +\mu\right)^{\si-1}-\mu^{\si-1}\right]^{\frac{1}{2}}\left( G_k(\zn) +\mu\right)^{\frac{1}{2}}\right)^{a\frac{N+2}{N}}\,dx\,ds\\
[4mm]\ds
\le 
{c_{GN}}\left\| G_k(\zn) +\mu\right\|_{L^\infty(0,t;L^\si(\Omega))}^{\frac{a\si}{N}} \ 
\iint_{Q_t}
\frac{|\N ( G_k(\zn) )|^a}{\left( G_k(\zn) +\mu\right)^{\frac{a}{2}(2-\si)}}\,dx\,ds.
\end{array}
\end{equation}
So far, we know that the second integral in \eqref{gnnn} is bounded thanks to \eqref{gnn}. Furthermore, it holds from \eqref{inn2} and \eqref{q-2} that
\begin{equation*}
\begin{array}{c}
\ds
\| G_k(\zn) +\mu\|_{L^\infty(0,t;L^\si(\Omega))}^\si\le
\integrale\Vp_k (z_n(t))\,dx\\
[4mm]\ds
 \le c\left(\iint_{E_{n,k}}  \B_{n,1}^d \,dx\,ds
\right)^{\frac{2}{d}}\left(\iint_{E_{n,k}} \B_{n,2}\,dx\,ds
\right)^{\frac{2-a}{a}} \times
\\
[4mm]\ds
\times \left(\iint_{Q_t}\left(\left[\left( G_k(\zn) +\mu\right)^{\si-1}-\mu^{\si-1}\right]^{\frac{1}{2}}\left( G_k(\zn) +\mu\right)^{\frac{1}{2}}\right)^{a\frac{N+2}{N}}
\,dx\,ds\right)^{\frac{2N}{a(N+2)}}+\omn
\end{array}
\end{equation*}
and then, using \eqref{gnn} and   \eqref{gnnn}, it follows that
\begin{equation*} 
\begin{array}{c}
\ds
\iint_{Q_t}\left(\left[\left( G_k(\zn) +\mu\right)^{\si-1}-\mu^{\si-1}\right]^{\frac{1}{2}}\left( G_k(\zn) +\mu\right)^{\frac{1}{2}}\right)^{a\frac{N+2}{N}}\,dx\,ds\\
[4mm]\ds
\le 
c\left(\iint_{E_{n,k}} \B_{n,1}^d \,dx\,ds
\right)^{\frac{2a}{dN}}\left(\iint_{E_{n,k}} \B_{n,2}\,dx\,ds
\right)^{\frac{2-a}{N}} \times
\\
[4mm]\ds
\times\left(\iint_{Q_t}
\left(\left[\left( G_k(\zn) +\mu\right)^{\si-1}-\mu^{\si-1}\right]^{\frac{1}{2}}\left( G_k(\zn) +\mu\right)^{\frac{1}{2}}\right)^{a\frac{N+2}{N}}
\,dx\,ds\right)
+\omn^{\frac{a}{N}}\, .
\end{array}
\end{equation*}
Finally, passing to the limit first with respect to $n$ (we apply Lebesgue Theorem on the right hand side)  and subsequently as $\mu\to 0$, we  obtain: 
\begin{equation*} 
\begin{array}{c}
\ds
\iint_{Q_t} G_k(u-v) ^{\frac{a\si}{2}\frac{N+2}{N}}\,dx\,ds
\le 
c\left(\iint_{E_k} \B_1^d\,dx\,ds
\right)^{\frac{2a}{dN}}\left(\iint_{E_k} \B_2\,dx\,ds
\right)^{\frac{2-a}{N}}\left(
\iint_{Q_t} G_k(u-v) ^{\frac{a\si}{2}\frac{N+2}{N}}\,dx\,ds\right)
\end{array}
\end{equation*}
where
\begin{equation}\label{EE}
E_k=\left\{
(t,x)\in Q_t:\quad u-v>k\,\,\text{and}\,\, |\N(u-v)|>0
\right\}
\end{equation}
while 
\begin{equation*} 
\begin{array}{c}
\ds
 \B_1=
\left[g+|\N u|^{q-1}+|\N v|^{q-1}\right]\ki \qquad \mbox{ and } \qquad 
     \B_2=
\left[
|\N u|^{2}+|\N v|^{2}
\right]^{\frac{N(q-(p-1))+2q-p}{2}}\ki.
\end{array}
\end{equation*}
We conclude the proof applying \cref{lemell1} with $\ro=a$, $\nu=2$ to the function $G_k(u-v)$.
\end{proof}
 
\medskip

Using the same ideas, adapted to the case of $L^1$ data and low values of $q$,  we prove now \cref{teo33} and  \cref{thm26}.
\medskip

\begin{proof}[Proof of \cref{teo33} and \cref{thm26}]
Before getting into the real proof, we recall \eqref{h2} and observe that
\begin{equation}\label{gradreg2}
\big[g+|\N u|^{q-1}+|\N v|^{q-1}\big]\ki\in L^{d}(Q_T).
\end{equation}
Indeed, \cref{lemmaq} provides us that  $|\N \up |,\,|\N  \vm  |\in L^q(Q_T)$ for every $q<2-\frac{N}{N+1}$. So
$|\N  \up |^{q-1}$ and $|\N \vm  |^{q-1}$ belong to $L^r(Q_T)$ for every $r>N+2$. Furthermore, being $(u-v)_+ $ a subsolution itself and 
reasoning as in the just mentioned Lemma with $T_k((\zn)_+ )$, then $(u-v)_+ $ inherits the $L^q(0,T;W^{1,q}(\Omega))$ regularity for every $q<2-\frac{N}{N+1}$. We underline that we use the fact that $\omn$ is assumed to be uniformly bounded in $n$ - it will soon proved - and also converging to $0$. Then, the regularity in \eqref{gradreg2} follows since
\begin{equation*}
\ds |\N v|\ki\ds \le |\N v|\chi_{\{v<0\}}+|\N v|\chi_{\{0<v<u\}} \le |\N  v_-  |+|\N  u_+ |+|\N (u-v)_+|,
\end{equation*}
and
\begin{equation*}
\ds|\N u|\ki \le\ds|\N  v_-  |+|\N  u_+ |+|\N (u-v)_+ |.
\end{equation*}

\smallskip

We use the same notation of  \cref{teorin2} (see \eqref{unvn} and \eqref{zn}); in particular wee take again $S(u)=S_n(u)$ in \eqref{zetaeps2} 
and $S(v)=S_n(v)$ in the formulation of the supersolution, and we set 
\[
\vp=\vp(\zn)=\frac{1}{\mu^\lm}-\frac{1}{( G_k(\zn) +\mu)^\lm} \quad\text{with}\quad\lm=\frac{N+1}{N}(p-q)-1\quad\text{and}\quad \mu >0
\]
 getting
\begin{equation}\label{uv2}
\begin{array}{c}
\ds \integrale\Vp_k(\zn(t))\,dx+
\lm\alpha\iint_{Q_t}\frac{|\N ( G_k(\zn) )|^2}{\left( G_k(\zn) +\mu\right)^{\lm+1}}\left[
|\N {u}_n|^{2}+|\N {v}_n|^{2}
\right]^{\frac{p-2}{2}}\,dx\,ds
\\
[4mm]
\ds
\le \frac{c_1}{\mu^\lm}\iint_{Q_t} \left|\N  G_k(\zn) \right|\B_{n,1}
\,dx\,ds+\frac{\Rn}{\mu^\lm}
\end{array}
\end{equation}
where $\B_{n,1}$  and 
$\Rn$ have been defined in   \eqref{BBn} and \eqref{I}, respectively.

Once again, we have that $\ds \lim_{n\to\infty}\Rn=0$;  the proof of this fact   is quite similar to the one contained in \cref{teorin2}. We just recall  the decompositions \eqref{omn}--\eqref{I} and that, using  the current choice of $\vp(\cdot)$ and the asymptotic energy condition \eqref{ET}, we deduce that  $I_1+I_2=\om_n $. 
The terms $I_3$ and $I_4$ follow as in \cref{teorin2}, just observing that now $f$ only belong to $L^1 (Q_T)$, using again \eqref{ET}.

\medskip

The uniform boundedness of the right hand side  in \eqref{uv2} follows from the above remark by   \eqref{gradreg2}. 

\medskip

Now, let the parameter $a\le p$ be such that $\frac{a(2-p)}{2-a}=q$, i.e. $a=\frac{2q}{q-p+2}$. Then, 
recalling the inequality in \eqref{uv2}, we obtain
\begin{equation}\label{rhs}
\begin{array}{c}
\ds
\iint_{Q_t}\frac{\left|\N  G_k(\zn) \right|^a}{\left( G_k(\zn) +\mu\right)^{a\frac{\lm+1}{2}}}\,dx\,ds
\le \frac{c}{\mu^{\frac{\lm a}{2}}}\left(
\iint_{E_{n,k}}\left|\N  G_k(\zn) \right|\B_{n,1}
\,dx\,ds+\omn
\right)^{\frac{a}{2}}
\left(\iint_{E_{n,k}}\B_{n,2}\,dx\,ds
\right)^{\frac{2-a}{2}}
\end{array}
\end{equation}
for $\B_{n,1},\,\B_{n,2}$ as in \eqref{BBn}, $E_{n,k}$ as in \eqref{En1}. 
In particular, \eqref{uv2} provides us with
\begin{equation}\label{qwerty}
\begin{array}{c}
\ds \integrale\Vp_k (\zn(t))\,dx
\le \frac{\bar{\ga}}{\mu^\lm}\iint_{E_{n,k}} \left|\N G_k(\zn)\right|\B_{n,1}
\,dx\,ds+\frac{ \omn}{\mu^\lm}.
\end{array}
\end{equation}
Furthermore, the definition of $\Vp_k(\cdot)$ implies
\begin{equation}\label{poiu}
\Vp_k (w)\ge \frac{c}{\mu^\lm} G_k(w) +c,
\end{equation}
with $c$ independent from $\mu$, so that 
\[
\integrale\Vp_k (\zn(t))\,dx\ge \frac{c}{\mu^\lm}\integrale G_k(\zn(t)) \,dx+c.
\]
We use \eqref{poiu} in \eqref{qwerty}, so that 
\begin{equation*}
\integrale  G_k(\zn(t)) \,dx\le  c 
\iint_{E_{n,k}} \left|\N  G_k(\zn) \right|\B_{n,1}\,dx\,ds+\omn+c\mu^\lm
\end{equation*}
that becomes, taking the limits as $\mu\to 0$, and then as $n\to +\infty$,
\begin{equation*}
\integrale  G_k(u(t)-v(t)) \,dx\le 
c\iint_{E_{n,k}} \left|\N  G_k(u-v) \right|\B_1\,dx\,ds, 
\end{equation*}
where  the last convergence follows thanks to  \cref{lemmaq}.

Furthermore, letting $n\to\infty$ also in \eqref{rhs}, we get
\begin{equation*} 
\begin{array}{c}
\ds
\iint_{Q_t}\frac{\left|\N  G_k(u-v) \right|^a}{\left[G_k(u-v) +\mu\right]^{a\frac{\lm+1}{2}}}\,dx\,ds
\le \frac{c}{\mu^{\frac{\lm a}{2}}}\left(
\iint_{Q_t}\left|\N  G_k(u-v) \right|\B_1
\,dx\,ds
\right)^{\frac{a}{2}}
\left(\iint_{E_k}\B_2\,dx\,ds
\right)^{\frac{2-a}{2}}
\end{array}
\end{equation*}
with $E_k$ as in \eqref{EE} and 
\begin{equation*} 
 \B_1=\left[g+
|\N u|^{q-1}+|\N v|^{q-1}\right]\ki
,\qq
 \B_2=
\left[
|\N u|^{2}+|\N v|^{2}
\right]^\frac{q}{2}\ki.
\end{equation*}
We now apply \cref{leml1} with 
\[
\ro=a,\quad \nu=\frac{\lm a}{2},\quad m=1,\quad\ga=a\frac{1+\lm}{2}
\]
and
\[
\begin{array}{c}
\ds
M=c\left(
\iint_{Q_t}\left|\N  G_k(u-v) \right|\B_1
\,dx\,ds
\right)^{\frac{a}{2}}
\left(\iint_{E_k}\B_2\,dx\,ds
\right)^{\frac{2-a}{2}},
\end{array}
\qquad 
L=c\iint_{Q_t}\left|\N  G_k(u-v) \right|\B_1
\,dx\,ds.
\]
In particular, the estimate in \eqref{gradb} holds with
\[
b=a\frac{N(1-\lm)+2}{2(N+1)}=q
\]
by the definitions of $a,\,\lm$ and
\[
\begin{array}{c}
\ds
\|\N  G_k(u-v) \|_{L^q(Q_T)}
\le c\left(\iint_{Q_t} \left|\N  G_k(u-v) \right|\B_1\,dx\,ds\right)\left(\iint_{E_k}\B_2\,dx\,ds
\right)^{\frac{(2-a)(N+1)}{a(N+2)}}.
\end{array}
\]
We apply H\"older's inequality with $(q,q')$ obtaining
\[
\begin{array}{c}
\ds
\|\N  G_k(u-v) \|_{L^q(Q_T)}
\le c \|\N  G_k(u-v) \|_{L^q(Q_T)}
\left(\iint_{Q_t} \B_1^{\frac{q}{q-1}}\,dx\,ds\right)^{\frac{q-1}{q}}\left(\iint_{E_k}\B_2\,dx\,ds
\right)^{\frac{(2-a)(N+1)}{a(N+2)}}.
\end{array}
\]
Observe that, since  $\frac{q}{q-1}\searrow \frac{p(N+1)-N}{p(N+1)-(2N+1)}$ as $q\nearrow p-\frac{N}{N+1}$, the integral involving $\B_1$ is bounded (up to choose $q$ closer to the threshold). 
Then we conclude by applying \cref{lemell1} with $\ro=q$.
\end{proof}

\section{Proofs in the case $2\le p<N$}\label{pge2}

\setcounter{equation}{0}
\renewcommand{\theequation}{\thesection.\arabic{equation}}
\numberwithin{equation}{section}

We start by proving the results for $p=2$, since their proofs are different from those of the case $p>2$. 
\subsection{The case $p=2$}

\begin{proof}[Proof of \cref{comp2}]
We follow the same notation that we have used for the proof of \cref{teorin2}, by defining  $\un,\,\vn$ as in \eqref{unvn}. Thus we   consider the inequalities in \eqref{renf} satisfied by the sub/supersolutions with $S(u)=u_n $ and $S(v)=v_n$ respectively so that,   we have
\begin{equation*}
\begin{array}{c}
\ds
\int_\Omega u_n(t)\varphi(t)\,dx+\iint_{Q_t}  A(s,x)\N \un\cdot \N\vp+A(s,x)\N u\cdot \N u \,\te'_n(u)\vp\,dx\,ds\\
[4mm]\ds
\le \iint_{Q_t} H(t,x,\N u)\tenu\varphi\,dx\,ds+\int_\Omega u_n(0)\varphi(0)\,dx
\end{array}
\end{equation*}
and
\begin{equation*}
\begin{array}{c}
\ds
\int_\Omega (1-\eps)v_n(t)\varphi(t)\,dx+\iint_{Q_t}  A(s,x)\N ((1-\eps)\vn)\cdot \N\vp+A(s,x)\N ((1-\eps)v)\cdot \N v \,\te'_n(v)\vp\,dx\,ds\\
[4mm]\ds
\ge (1-\eps) \iint_{Q_t} H(t,x,\N v)\tenv\varphi\,dx\,ds+(1-\eps)\int_\Omega v_n(0)\varphi(0)\,dx
\end{array}
\end{equation*}
where the inequality related to the supersolution has been multiplied by $(1-\eps)$, for $\eps\in (0,1)$. 
Then, taking into account the difference between the inequalities above, we get 
\begin{equation*}
\begin{array}{c}
\ds
\int_\Omega\left[{u}_n(t)-(1-\eps){v}_n(t)\right]\varphi(t)\,dx+\iint_{Q_t}  A(s,x)\N \left({u}_n-(1-\eps){v}_n\right)\cdot \N\vp\,dx\,ds\\
[4mm]\ds+
\iint_{Q_t} 
\big[A(s,x)\N u\cdot \N u \,\te'_n(u)-A(s,x)\N ((1-\eps)v)\cdot \N v \,\te'_n(v)\big]\vp\,dx\,ds\\
[4mm]\ds
\le  \iint_{Q_t}\big[ H(t,x,\N u)\tenu-(1-\eps)H(t,x,\N v)\tenv\big]\varphi\,dx\,ds+\int_\Omega\zne(0)\varphi(0)\,dx.
\end{array}
\end{equation*}
We use the hypothesis \eqref{dec} in order to we rewrite the above inequality as 
\begin{equation}\label{rn2}
\begin{array}{c}
\ds
\int_\Omega\left({u}_n(t)-(1-\eps){v}_n(t)\right)\varphi(t)\,dx+\iint_{Q_t}  A(s,x)\N \left({u}_n-(1-\eps){v}_n\right)\cdot \N\vp\,dx\,ds\\
[4mm]\ds
\le  \iint_{Q_t}\left[ H_1(s,x,\N \un)-(1-\eps)H_1(s,x,\N \vn)\right]\varphi+\left[ H_2(s,x,\N \un)-(1-\eps)H_2(s,x,\N \vn)\right]\varphi\,dx\,ds+\Rn\,,
\end{array}
\end{equation}
where 
\[
\begin{split}
\Rn&=
\iint_{Q_t} 
\big[A(s,x)\N u\cdot \N u \,\te'_n(u)-A(s,x)\N ((1-\eps)v)\cdot \N v \,\te'_n(v)\big]\vp\,dx\,ds\\
&\quad+\iint_{Q_t}\big[ H(s,x,\N u)\tenu-(1-\eps)H(s,x,\N v)\tenv-H(s,x,\N \un)+(1-\eps)H(s,x,\N \vn)\big]\varphi\,dx\,ds\\
&\quad+
\int_\Omega\zne(0)\varphi(0)\,dx.
\end{split}
\]

Let us observe  that the conditions \eqref{H2.1} and \eqref{H2.2} imply that
$$\begin{array}{c}
\ds
H_2(s,x,\N \un)-(1-\varepsilon)H_2(s,x,\N \vn)\\
[4mm]\ds
\le |H_2(s,x,\N \un)-H_2(s,x,(1-\varepsilon)\N \vn)|
 +H_2(s,x,(1-\varepsilon)\N \vn)-(1-\varepsilon)H_2(s,x,\N \vn)\\
 [4mm]\ds 
\le c_2|\N \left({u}_n-(1-\eps){v}_n\right)|
\le  c  \varepsilon \bigg[ \frac{|\N  \left({u}_n-(1-\eps){v}_n\right)|^q}{\varepsilon^q} +1\bigg]\,,
\end{array}
$$
byYoung's inequality.
On the other hand, since $H_1(t,x,\xi)$ satisfies \eqref{H1.1} (i.e. the convexity  assumption with respect to  $\xi$), we have 
\[
H_1(s,x,\N \un)-(1-\varepsilon)H_1(s,x,\N \vn)\le \varepsilon H_1\biggl(t,x,\frac{\N \left({u}_n-(1-\eps){v}_n\right)}{\varepsilon}\biggl).
\]
Finally, the growth assumption on $H_1(s,x,\xi)$ contained in \eqref{H111} allows us to improve \eqref{rn2} as
\begin{equation*}
\begin{array}{c}
\ds
\int_\Omega\left({u}_n(t)-(1-\eps){v}_n(t)\right)\varphi(t)\,dx+\iint_{Q_t}  A(s,x)\N \left({u}_n-(1-\eps){v}_n\right)\cdot \N\vp\,dx\,ds\\
[4mm]\ds
\le  \eps\iint_{Q_t}\left(c_1\frac{|\N \left({u}_n-(1-\eps){v}_n\right)|^q}{\varepsilon^q} + \tilde f\right) \,dx\,ds+\Rn
\end{array}
\end{equation*}
where $\tilde f=f+c$. 
In particular, the inequality above can be written in terms of the function
\begin{equation}\label{zneh}
\zne (t,x)=\frac{e^{- t}}{\eps}\big({u}_n (t,x) -(1-\eps){v}_n(t,x) \big)
\end{equation}
as
\begin{equation}\label{finz}
\begin{array}{c}
\ds
\int_\Omega\zne(t)\varphi(t)\,dx+\iint_{Q_t}  \zne\vp\,dx\,ds+\iint_{Q_t}  A(s,x)\N \zne\cdot \N\vp\,dx\,ds\\
[4mm]\ds
\le   \iint_{Q_t} \bigg(c_1   {|\N z^\varepsilon_n|^q}  +  \tilde f\bigg) \vp \,dx\,ds+\frac{\Rn}{\eps}.
\end{array}
\end{equation}
We consider the inequality in \eqref{finz} with 
$$\ds \vp(\zne)=\int_0^{ G_k(\zne) }(\mu+|w|)^{\si-2}\,dw,\quad \mu>0,$$ 
(again, we recall the density results in \cite[Proposition $4.2$]{PPP}) getting
\begin{equation*}
\begin{array}{c}
\ds
\int_{\Omega}  \Vp_k(\zne(t))\,dx
+\iint_{Q_t}  \zne\vp\,dx\,ds
+{\al}\iint_{Q_t}|\N \Psi_k( G_k(\zne) )|^2\,dx\,ds
\\
[4mm]\ds
\le c_1  \iint_{Q_t} |\N  G_k(\zne) |^q\biggl(\int_0^{ G_k(\zne) }(\mu+|w|)^{\sigma-2}\,dz\biggr)\,dx\,ds
\\
[4mm]\ds
+\iint_{Q_t} |\tilde{f}|\chi_{\{| \tilde{f}|>k\}} \biggl(\int_0^{ G_k(\zne) }(\mu+|w|)^{\sigma-2}\,dz\biggr) \,dx\,ds+\iint_{Q_t} |\tilde{f}|\chi_{\{| \tilde{f}|\le k\}} \vp \,dx\,ds
+\frac{\Rn}{\eps}\,,
\end{array}
\end{equation*} 
with
\[
\Psi_{\mu}( G_k(\zne) )=\int_0^{ G_k(\zne) } (\mu+|w|)^{\frac{\sigma-2}{2}}\,dw \qquad\text{and}\qquad\Vp_k(\zne(t)))=\int_0^{ G_k(\zne(t)) }\vp(w)\,dw.
\]
Observe that since $\vp$ is supported where $\zne \geq k$, then  
\[
\ds \iint_{Q_t}  \zne\vp\,dx\,ds-\iint_{Q_t} |\tilde{f}|\chi_{\{| \tilde{f}|\le k\}} \vp\,dx\,ds\ge
k\iint_{Q_t}  \vp\,dx\,ds-k\iint_{Q_t}\vp \,dx\,ds\ge 0\,,
\]
and we are reduced to study
\begin{equation*}
\begin{array}{c}
\ds
\int_{\Omega}  \Vp_k(\zne(t))\,dx
+{\al}\iint_{Q_t}|\N \Psi_k( G_k(\zne) )|^2\,dx\,ds
\\
[4mm]\ds
\le c_1  \iint_{Q_t} |\N  G_k(\zne) |^q\biggl(\int_0^{ G_k(\zne) }(\mu+|w|)^{\sigma-2}\,dw\biggr)\,dx\,ds
+\iint_{Q_t} |\tilde{f}|\chi_{\{| \tilde{f}|>k\}} \biggl(\int_0^{ G_k(\zne) }(\mu+|w|)^{\sigma-2}\,dw\biggr) \,dx\,ds 
+\frac{\Rn}{\eps}.
\end{array}
\end{equation*} 
We just note that, by definitions of $\te_n(\cdot)$, $\vp(\cdot)$ and thanks to \eqref{aa1}, \eqref{pot}, the proof of $\dys \lim_{n\to\infty}\Rn=0$ follows reasoning as in \cref{teorin2}.\\

We observe that the definition of $\Psi_k(\cdot)$ combined with H\"older's inequality with indices $\left(\frac{1}{2-q},\frac{1}{q-1}\right)$ and also an application Young inequality with $\left(\frac{2}{q},\frac{2}{2-q}\right)$ leads   to 
\[
\begin{array}{c}
\ds
 \iint_{Q_t} |\N  G_k(\zne) |^q\biggl(\int_0^{ G_k(\zne) }(\mu+|w|)^{\sigma-2}\,dw \biggr)\,dx\,ds
\\[4mm]\ds
\le c  \iint_{Q_t}|\N \Psi_k( G_k(\zne) )|^q\biggl(\int_0^{ G_k(\zne) }(\mu+|w|)^{(\sigma-2)\frac{2-q}{2}}\,dw \biggr)\,dx\,ds\\
[4mm]
\ds\le c  \iint_{Q_t} |\N \Psi_k( G_k(\zne) )|^q 
 |\Psi_k( G_k(\zne) )|^{2-q} G_k(\zne) ^{q-1}\,dx\,ds\\
 [4mm]
 \ds\le  \frac{{\al}}{2}\iint_{Q_t} |\N \Psi_k( G_k(\zne) )|^2\,dx\ ds+c \iint_{Q_t}
  |\Psi_k( G_k(\zne) )|^{2} G_k(\zne) ^{\frac{2(q-1)}{2-q}}\,dx\,ds.
\end{array}
\]
We now focus on the term involving the source $f$. 
We recall the estimate in \cref{teorin2} and apply H\"older's inequality with $(m,m')$ and $(r,r')$ getting
\begin{equation*}
\begin{array}{c}
\ds
\iint_{Q_t} |\tilde{f}|\chi_{\{| \tilde{f}|>k\}} \biggl(\int_0^{ G_k(\zne) }(\mu+|w|)^{\sigma-2}\,dw\biggr) \,dx\,ds \\
[4mm]\ds \le c\iint_{Q_t}|\tilde{f}|\chi_{\{| \tilde{f}|>k\}} \biggl(\int_0^{ G_k(\zne) }(\mu+|w|)^{\frac{\sigma-2}{2}}\,dw \biggr)^{\frac{2}{\sigma'}} \,dx\,ds\\
[4mm]\ds
\le c\| |\tilde{f}|\chi_{\{| \tilde{f}|>k\}}\|_{L^r(0,T;L^m(\Omega))}\left\|\Psi_k( G_k(\zne) )\right\|_{L^{2r'\frac{\sigma-1}{\si}}(0,t;L^{2m'\frac{\sigma-1}{\si}}(\Omega))}^{2\frac{\sigma-1}{\si}}
\end{array}
\end{equation*}
where $r,\,m$ verifies \eqref{F1}. Finally, we gather together  the estimates above and find that
\begin{equation}\label{k}
\begin{array}{c}
\ds
\int_{\Omega}  \Vp_k(\zne(t))\,dx+\frac{{\al}}{2}\iint_{Q_t}|\N \Psi_k( G_k(\zne) )|^2\,dx\,ds
\le c \iint_{Q_t}
  |\Psi_k( G_k(\zne) )|^{2} G_k(\zne) ^{\frac{2(q-1)}{2-q}}\,dx\,ds
  \\
[4mm]\ds
+c\| |\tilde{f}|\chi_{\{| \tilde{f}|>k\}}\|_{L^r(0,T;L^m(\Omega))} \|\Psi_k( G_k(\zne) )\|_{L^{2r'\frac{\sigma-1}{\si}}(0,t;L^{2m'\frac{\sigma-1}{\si}}(\Omega))}^{2\frac{\sigma-1}{\si}}+\omn.
\end{array}
\end{equation} 
Then,  since $\si+2\frac{q-1}{2-q}=2\frac{N+\si}{N}$ (indeed, $\Psi_k(\zne)\le\frac{c}{\eps}(|u|^{\frac{\si}{2}}+|u|^{\frac{\si}{2}})$) and thanks to \eqref{pot}, the energy integral in the left hand side  is uniformly bounded.

Moreover by \cref{teoGN} we have that 
\[
\iint_{Q_t}
  |\Psi_k( G_k(\zne) )|^{2} G_k(\zne) ^{\frac{2(q-1)}{2-q}}\,dx\,ds\le c\| G_k(\zne) \|_{L^{\infty}(0,t;L^{\si}(\Omega))}^{q-1}\int_0^t\|\N \Psi_k( G_k(\zne(s)) )\|_{L^{2}(\Omega)}^{2}\,ds
\]
and, reasoning as in \eqref{frn}, we get
\[
 \|\Psi_k( G_k(\zne) )\|_{L^{2r'\frac{\sigma-1}{\si}}(0,t;L^{2m'\frac{\sigma-1}{\si}}(\Omega))}^{2\frac{\sigma-1}{\si}}\le c\| G_k(\zne) \|_{L^{\infty}(0,t;L^{\si}(\Omega))}^{\frac{\si}{2}(y-2)}\int_0^t\|\N \Psi_k( G_k(\zne(s)) )\|_{L^{2}(\Omega)}^{2}\,ds,
\]
so that the inequality \eqref{k} reads as 
\begin{equation*}
\begin{array}{c}
\ds
\int_{\Omega} \Vp_k(\zne(t))\,dx
+\frac{\al}{2}\iint_{Q_t}|\N \Psi_{\mu}( G_k(\zne) )|^2 \,dx\,ds
\\
[4mm]\ds
\le C_1 \left[\| G_k(\zne) \|_{L^{\infty}(0,t;L^{\si}(\Omega))}^{q-1}+\| G_k(\zne) \|_{L^{\infty}(0,t;L^{\si}(\Omega))}^{\frac{\si}{2}(y-2)}\right]\int_0^t\|\N \Psi_k( G_k(\zne(s)) )\|_{L^{2}(\Omega)}^{2}\,ds\\
[4mm]\ds
+C_2\| |\tilde{f}|\chi_{\{| \tilde{f}|>k\}}\|_{L^r(0,T;L^m(\Omega))}^{\frac{y\si}{y\si-2(\si-1)}}+\int_{\Omega} \Vp_k(v_0)\,dx
+\omn
\end{array}
\end{equation*}
for any $\mu>0$, where we have used that  $\zne(0)\le v_0$. \\
 Now, reasoning as in the proof of the a priori estimates contained in  \cite{Ma}. We fix a value $\de_0$ such that \\$\max\left\{
\de_0^{\frac{q-1}{\si}},\de_0^{\frac{y-2}{2}}
\right\}=  \frac1{2C_1 }  \frac{\al}{2}$ and we take $k\ge k_0$ such that
\[
\integrale  G_k(v_0) ^{\sigma}\,dx+C_2\| |\tilde{f}|\chi_{\{| \tilde{f}|>k\}}\|_{L^r(0,T;L^m(\Omega))}^{\frac{y\si}{y\si-2(\si-1)}}<\de_0
\]
for any $k\ge k_0$. We also define
\[
T^*=\sup\left\{\tau>0: \quad \|G_k(z_n^\eps(s))\|_{L^\si(\Omega)}^\si\le \de_0\quad \forall s\le \tau  \right\}
\]
which is strictly positive by \eqref{csi} and since $u_0\leq v_0$ in $\Omega$.  Note also that $T^*$ continuously depends on $n$ by \eqref{csi}.  \\
Then, for $k\ge k_0$ and for any $t\le T^*$ we have
\begin{equation*}
\sup_{t\in [0,T^*]}\int_{\Omega} \Vp_k(z_n^\eps(t))\,dx
<\de_0
\end{equation*}
and, letting $\mu\to 0$, we deduce
\begin{equation}\label{T}
\sup_{t\in [0,T^*]}\int_{\Omega} G_k(z_n^\eps(t))^\si\,dx
<\de_0.
\end{equation}
Now, if $T^*<T$, then \eqref{T} would be in contrast with both the continuity regularity in \eqref{csi} and the definition of $T^*$, so \eqref{T} holds up to $T$.\\
We thus deduce a bound, uniform in $\varepsilon$, for the function ${z_n}^\varepsilon(t)$ in $L^{\sigma}(\Omega)$. Indeed, we have
\begin{equation*}
\sup_{t\in [0,T]}\integrale  z_n^\varepsilon(t) ^{\sigma}\,dx\le \de_0+ k_0^{\sigma}T|\Omega|
\end{equation*}
and then, letting $n\to\infty$ and recalling the definition of $\zn$ in \eqref{zneh}, leads to 
\[
\integrale [u(t)-(1-\varepsilon)v(t) ]^{\sigma}\,dx\le \varepsilon^{\sigma} c
\]
which, letting $\varepsilon\to 0$, implies $u\le v$ in $Q_T$ and thus the assertion is proved.
\end{proof}

\medskip

Next we prove \cref{teorinL12}. 

\medskip

\begin{proof}[Proof of \cref{teorinL12}]
We start recalling the inequality in \eqref{finz}, with $\zne$ defined in \eqref{zneh}, and we set
$$ \vp(\zne)=1-\frac{1}{(1+ G_k(\zne) )^\mu},\qquad \mbox{with } \quad \mu=\frac{N+1}{N}(2-q)-1<1,
$$
so we get
\begin{equation*}  
\begin{array}{c}
\ds
\integrale \Vp(\zne(t))\,dx
+\mu\al\iint_{Q_t}\frac{|\N  G_k({z}^\eps_n) |^2}{(1+ G_k({z}^\eps_n) )^{\mu+1}}\,dx\,ds+\iint_{Q_t}\zne \left[1-\frac{1}{(1+ G_k({z}^\eps_n) )^\mu}\right]\,dx\,ds
\\
[4mm]
\ds
\le c_1e^{(q-1) T}\iint_{Q_t}|\N  G_k({z}^\eps_n) |^q\left[1-\frac{1}{(1+ G_k({z}^\eps_n) )^\mu}\right]\,dx\,ds+\iint_{Q_t}| \tilde{f}|\chi_{\{|f|>k\}}\left[1-\frac{1}{(1+ G_k({z}^\eps_n) )^\mu}\right]\,dx\,ds \\
[4mm]\ds
+\iint_{Q_t}| \tilde{f}|\chi_{\{|f|\le k\}}\left[1-\frac{1}{(1+ G_k({z}^\eps_n) )^\mu}\right]\,dx\,ds+\integrale \Vp(\zne(0))\,dx+\Rn,
\end{array}
\end{equation*}
where $\Phi (w)= \int_0^w \vp (y) dy$. 
Again, we observe that  
\[
\iint_{Q_t}\zne \left[1-\frac{1}{(1+ G_k({z}^\eps_n) )^\mu}\right]\,dx\,ds-\iint_{Q_t}| \tilde{f}|\chi_{\{|f|\le k\}}\left[1-\frac{1}{(1+ G_k({z}^\eps_n) )^\mu}\right]\,dx\,ds\ge 0
\]
  so we drop it,  and we just deal with
\begin{equation} \label{in9}
\begin{array}{c}
\ds
\integrale \Vp(\zne(t))\,dx
+\mu\al\iint_{Q_t}\frac{|\N  G_k({z}^\eps_n) |^2}{(1+ G_k({z}^\eps_n) )^{\mu+1}}\,dx\,ds
\\
[4mm]
\ds
\le c_1e^{(q-1) T}\iint_{Q_t}|\N  G_k({z}^\eps_n) |^q\left[1-\frac{1}{(1+ G_k({z}^\eps_n) )^\mu}\right]\,dx\,ds+\iint_{Q_t}| \tilde{f}|\chi_{\{|f|>k\}}\left[1-\frac{1}{(1+ G_k({z}^\eps_n) )^\mu}\right]\,dx\,ds \\
[4mm]\ds
+\integrale \Vp(\zne(0))\,dx+\Rn.
\end{array}
\end{equation}
Our purpose is to recover  an a priori estimate for $\zne$. We begin applying Young's inequality with $\left(\frac{2}{q},\frac{2}{2-q}\right)$ to the first integral in the right hand side  of \eqref{in9} obtaining
\begin{equation*}
\begin{array}{c}
\ds
c_1e^{(q-1) T}\iint_{Q_t}|\N  G_k(\zne) |^q\left[1-\frac{1}{(1+ G_k(\zne) )^\mu}\right]\,dx\,ds\\
[4mm]
\ds
\le \frac{\mu \al}{2}\iint_{Q_t}\frac{|\N  G_k(\zne) |^2}{(1+ G_k(\zne) )^{\mu+1}}\,dx\,ds
+ 
c \iint_{Q_t}(1+ G_k(\zne) )^{\frac{q(\mu+1)}{2-q}}\left[1-\frac{1}{(1+ G_k(\zne) )^\mu}\right]^{\frac{2}{2-q}}\,dx\,ds.
\end{array}
\end{equation*}
We observe that, since $\mu<1$, it holds that 
$1-\frac{1}{(1+ G_k(\zne) )^\mu}\leq \frac{ G_k(\zne) }{1+ G_k(\zne) }$,  and since  $\frac{2}{2-q}>1$, it follows
$
\left(\frac{ G_k(\zne) }{1+ G_k(\zne) }\right)^{\frac{2}{2-q}}\le G_k(\zne)
$
 and thus we deduce the uniform boundedness 
\begin{equation*}  
\begin{array}{c}
\ds
\integrale \Vp(\zne(t))\,dx
+\mu\al\iint_{Q_t}\frac{|\N  G_k({z}^\eps_n) |^2}{(1+ G_k({z}^\eps_n) )^{\mu+1}}\,dx\,ds
\\
[4mm]
\ds
\le c e^{(q-1) T}\iint_{Q_t} |G_k({z}^\eps_n)|\,dx\,ds+\iint_{Q_t}| \tilde{f}|\chi_{\{|f|>k\}}\,dx\,ds  
+\integrale \Vp(\zne(0))\,dx+\Rn,
\end{array}
\end{equation*}
since, as already observed,  the asymptotic condition \eqref{ET} takes the place of \eqref{pot} in the proof that $\ds \lim_{n\to\infty}\Rn=0$. \\
In particular, this means that the energy term above is uniformly bounded in $n$.

Furthermore, since $\ds q\frac{\mu+1}{2-q}=q\frac{N+1}{N}$ is the Gagliardo-Nirenberg exponent associated  to the spaces
$
 L^2(0,T;H_0^1(\Omega))\cap L^\infty(0,T; L^{\frac{2}{1-\mu}}(\Omega))
$, 
(see \cref{teoGN}) we have that
\begin{equation*}
\begin{array}{c}
\ds
\iint_{Q_T}(1+ G_k(\zne) )^{\frac{q(\mu+1)}{2-q}}\left[1-\frac{1}{(1+ G_k(\zne) )^\mu}\right]^{\frac{2}{2-q}}\,dx\,dt
\le 
\iint_{Q_T}(1+ G_k(\zne) )^{\frac{q(\mu+1)}{2-q}-1} G_k(\zne) \,dx\,dt\\
[4mm]\ds
\le  {c_{GN}} \| G_k(\zne) \|_{L^\infty(0,T;L^{1}(\Omega))}^{\frac{p}{N}}\iint_{Q_T}\frac{|\N  G_k(\zne) |^2}{(1+ G_k(\zne) )^{\mu+1}}\,dx\,dt\,.
\end{array}
\end{equation*}
Then \eqref{in9}   becomes
\[
\begin{array}{c}
\ds
\integrale \Vp(\zne(t))\,dx
+\frac{\mu\al}{2}\iint_{Q_t}\frac{|\N   G_k(\zne) |^2}{(1+ G_k(\zne) )^{\mu+1}}\,dx\,ds
\\
[4mm]
\ds
\le \bar{c}\| G_k(\zne) \|_{L^\infty(0,T;L^{1}(\Omega))}^{\frac{2-q}{N}}\iint_{Q_T}\frac{|\N  G_k(\zne) |^2}{(1+ G_k(\zne) )^{\mu+1}}\,dx\,dt
+\iint_{Q_t}| \tilde{f}|\chi_{\{|f|>k\}}\,dx\,ds+\integrale  G_k(v_0) \,dx
+\omn.
\end{array}
\]

We observe that the function $\Vp(z)$ can be estimated from below as $\Vp( w)>C_1\min\{ G_k (w) ,G_k (w)^2\}$, for a  certain $C_1>0$,   from which
\begin{equation}\label{bohh}
\begin{split}
\integrale  G_k(z_n^\eps(t))\,dx & \le \int_{\{  G_k(z_n^\eps(t))>1\}} G_k(z_n^\eps(t))\,dx+|\Omega|^{\frac{1}{2}}\left( \int_{\{ G_k(z_n^\eps(t))\le 1\}}  G_k(z_n^\eps(t))^2\,dx\right)^\frac{1}{2}\\
&\le \frac{1}{C_1}\integrale \Vp(z_n^\eps(t))\,dx+\left(\frac{|\Omega|}{C_1}\right)^\frac{1}{2}\left(\integrale \Vp(z_n^\eps(t))\,dx\right)^\frac{1}{2}. 
\end{split}
\end{equation}
We fix a small value $\de_0$ so that the equality
$\ds \frac{\mu\al}{2}=\bar{c}(C_0\de_0^\frac{1}{2})^{\frac{p-q}{N}}$ holds for $\ds C_0=2\max\left\{\frac{1}{C_1},\left(\frac{|\Omega|}{C_1}\right)^\frac{1}{2} \right\}$.
Moreover, for $\de<\de_0$ , we let $k_0$ large enough so that
\begin{equation}\label{***}
\integrale  G_k(v_0) \,dx+
\iint_{Q_t}| \tilde{f}|\chi_{\{|\tilde{f}|>k\}}\,dx\,ds<\de\qquad\forall  k\ge k_0
\end{equation}
and we define
\[
T^*:=\sup\{\tau> 0: \,\|G_k(z_n^\eps(s))\|_{L^{1}(\Omega)}\le C_0\de^\frac{1}{2}, \,\,\forall\, s\le \tau  \} \qquad\forall k\ge k_0.
\]
Notice that $T^*>0$ thanks to the regularity $u,\,v\in C([0,T];L^1(\Omega))$. We underline again that such a continuity regularity implies that $T^*$ continuously depends on $n$.\\
The above choice of $\de_0$ and \eqref{***} imply
\begin{equation}
\integrale \Vp(z^\eps(t))\,dx\le \integrale  G_k(v_0) \,dx+\iint_{Q_t}| \tilde{f}|\chi_{\{|\tilde{f}|>k\}}\,dx\,ds<\de\qquad\forall k\ge k_0,\,\,\forall t\le T^*.
\end{equation}
Therefore, by definition of $C_1$ and $C_0$ and thanks to \eqref{bohh}, we obtain
\begin{equation}\label{contr2}
\begin{split}
\integrale G_k(z_n^\eps(t))\,dx< C_0\de^\frac{1}{2}\q\forall t\le T^*.
\end{split}
\end{equation}
By the continuity regularity $u,\,v\in C([0,T];L^1(\Omega))$ and \eqref{contr2} we deduce that $T^*= T$, since if $T^*< T$ then \eqref{contr2} would be in contrast with the definition of $T^*$ and since $u,\,v\in C([0,T];L^1(\Omega))$.\\
Once we have \eqref{contr2} for $T= T^*$ then we have
\[
\integrale z_n^\eps(t)\,dx\le \integrale G_k(z_n^\eps(t))\,dx+\integrale T_k(z_n^\eps(t))\,dx\le C_0\de^{\frac{1}{2}}+k
\]
which, letting $n\to \infty$ and recalling the definition of $\zn$ in \eqref{zneh}, leads to
\[
\integrale \big(u(t)-(1-\varepsilon)v(t)\big) \,dx\le \eps e^t(C_0\de^{\frac{1}{2}}+k)
\]
and the proof follows once we let $\eps\to 0$. 
\end{proof}

\subsection{The case $2<p<N$}


Here we prove our results via the \lq\lq convexity\rq\rq\, method.

\proof[Proof of \cref{teoen1}] 
We want to follow the first part of \cref{comp2}. In order to do it, we recall the definitions of $\un, \,\vn$ in \eqref{unvn} and consider the renormalized formulations in \eqref{renf}. We focus on the one related to the supersolution $v$: we consider $S(v)=\vn$ and multiply its inequality by $ (1-\eps)^{p-1}$, we get 
\begin{equation*}
\begin{array}{c}
\ds
\int_\Omega (1-\eps)^{p-1}v_n(t)\varphi(t)\,dx+\iint_{Q_t}  A(x)|\N (1-\eps)\vn|^{p-2}\N((1-\eps)\vn)\cdot \N\vp\,dx\,ds\\
[4mm]\ds
\ge (1-\eps)^{p-1} \iint_{Q_t} H(s,x,\N \vn)\varphi\,dx\,ds
\\
[4mm]\ds-\iint_{Q_t} A(x)
\big[
\tenv |\N ((1-\eps)v)|^{p-2}\N ((1-\eps)v)-|\N ((1-\eps)\vn)|^{p-2}\N ((1-\eps)\vn)
\big]\cdot\N\vp
\,dx\,ds\\
[4mm]\ds
-(1-\eps)^{p-1}\iint_{Q_t} A(x)\N v\cdot \N v \,\te'_n(v)\vp\,dx\,ds
+ (1-\eps)^{p-1} \iint_{Q_t} \big[H(s,x,\N v)\tenv-H(s,x,\N \vn)\big]\varphi\,dx\,ds\\
[4mm]\ds
+(1-\eps)^{p-1}\int_\Omega v_n(0)\varphi(0)\,dx,
\end{array}
\end{equation*}
Then, rescaling in time this last inequality and defining $\vne$ as
$$ \vne(t,x)=(1-\eps)\vn((1-\eps)^{p-2}t,x),$$ 
we obtain 
\begin{equation}\label{hatv}
\begin{array}{c}
\ds
\int_\Omega \vne(t)\varphi(t)\,dx+\iint_{Q_t^\eps}  A(x)|\N \vne|^{p-2}\N\vne\cdot \N\vp\,dx\,ds
\ge (1-\eps)^{p-1} \iint_{Q_t^\eps} H\left((1-\eps)^{p-2}s,x,\frac{\N \vne}{1-\eps}\right)\varphi\,dx\,ds
\\
[4mm]\ds-\iint_{Q_t^\eps} 
\left[A(x)
\left(
\tenv |\N ((1-\eps)v)|^{p-2}\N ((1-\eps)v)-|\N \vne|^{p-2}\N \vne
\right)
\right]\cdot\N\vp
\,dx\,ds\\
[4mm]\ds
-(1-\eps)^{p-1}\iint_{Q_t^\eps} A(x)\N v\cdot \N v \,\te'_n(v)\vp\,dx\,ds
+ (1-\eps)^{p-1} \iint_{Q_t^\eps} \left[H(s,x,\N v)\tenv-H(s,x,\N \vn)\right]\varphi\,dx\,ds\\
[4mm]\ds
+\int_\Omega \vne(0)\varphi(0)\,dx,
\end{array}
\end{equation}
where $Q_t^\eps=\left(0,\frac{t}{(1-\eps)^{p-2}}\right)\times \Omega$ for $0\le t \le T$. 
We want to take into account the difference between
\begin{equation*}
\begin{array}{c}
\ds
\int_\Omega u_n(t)\varphi(t)\,dx+\iint_{Q_t}  A(x)|\N \un|^{p-2}\N \un\cdot \N\vp\,dx\,ds\le \iint_{Q_t}H(s,x,\N \un)\varphi\,dx\,ds\\
[4mm]\ds
-\iint_{Q_t}A(x)\N u\cdot \N u \,\te'_n(u)\vp\,dx\,ds
-\iint_{Q_t}A(x)\left(
\tenu |\N u|^{p-2}\N u-|\N \un|^{p-2}\N \un
\right)\\
[4mm]\ds
+ \iint_{Q_t} \left[H(s,x,\N u)\tenu-H(s,x,\N \un)\right]\varphi\,dx\,ds+\int_\Omega u_n(0)\varphi(0)\,dx
\end{array}
\end{equation*}
and \eqref{hatv}: to this aim, we restrict the integrals to the time interval $0\le t\le T$ since \eqref{hatv} holds in $Q_t^\eps\supset Q_t$. 

As in the previous cases, our aim is to write an inequality solved by the following function 
\begin{equation}\label{zneh2}
\zne (t,x)=\frac{e^{-\lm t}}{\eps}\big(\un(t,x)-\vne(t,x)\big)- e^{-\lm t}\bar{v}_0- Mt
\end{equation}
where $M>0$ has been defined in \eqref{H2},  $\lambda>0$ to be fixed and    $\bar{v}_0$ (the upper bound of $v_0$) is assumed, without loss of generality, to be positive
\begin{equation}\label{diff}
\begin{array}{c}
\displaystyle
\intO \zne(t)\vp(t)\,dx+\iint_{Q_t}\left(\lm \zne+ M\right)\vp\,dx\,ds+\iint_{Q_t}\frac{e^{-\lm t}}{\eps}A(x)\left(|\N  u_n|^{p-2}\N  u_n
-|\N  v^\eps_n|^{p-2}\N  v^\eps_n \right)\cdot\N\vp\,dx\,ds
\\[4mm]
\displaystyle \le\iint_{Q_t} \frac{e^{-\lm t}}{\eps}\biggl[H\left(s,x,\N u_n\right)-(1-\eps)^{p-1}H\left((1-\eps)^{p-2}s,x,\frac{\N v_n^\eps}{1-\eps}\right)\biggr]\vp\,dx\,ds+\intO \zne(0)\vp(0)\,dx+R_n
\end{array}
\end{equation}
and
\[
\begin{split}
R_n&=-\iint_{Q_t}\frac{e^{-\lm t}}{\eps}A(x)\big[\tenu |\N u|^{p-2}\N u-|\N \un|^{p-2}\N \un\big]\cdot
\N\vp\,dx\,ds\\
&\q  -\iint_{Q_t}\te_n'(u)A(x)|\N u|^{p-2}\N u\cdot \N u\,\vp\,dx\,ds +\iint_{Q_t}\big[H(s,x,\N u)\tenu-H(s,x,\N \un)\big]\vp\,dx\,ds\\
&\q
-\iint_{Q_t}\frac{e^{-\lm t}}{\eps}A(x)\big[
\te_n(v((1-\eps)^{p-2}t,x)) |\N \ve|^{p-2}\N \ve-|\N \vne|^{p-2}\N \vne
\big]\cdot\N\vp\,dx\,ds\\
&\q
-(1-\eps)^{-1}\iint_{Q_t}\te_n'(v((1-\eps)^{p-2}t,x))A(x)|\N  \ve|^{p-2}\N \ve\cdot \N v\,\vp\,dx\,ds\\
&\q
 +\iint_{Q_t}\big[H((1-\eps)^{p-2}s,x,\N \ve)\te_n(v((1-\eps)^{p-2}t,x))-H((1-\eps)^{p-2}s,x,\N \vne)\big]\vp\,dx\,ds.
\end{split}
\]

Let us define, for any $\mu>0$ and $a\ge 1$, the following function:  
\begin{equation}\label{psimu}
\Psi_{a,\mu}(v)=\int_0^{ G_k(v) } (\mu+|w|)^{\frac{\sigma-2}{a}}\,dw.
\end{equation}
Then, we set 
$$\ds \vp(\zne)= \Psi_{1,\mu}(\zne)=\int_0^{ G_k({z}_n^\eps) }(\mu+|w|)^{\si-2}\,dw $$ 
in  \eqref{diff}, with $k\ge \bar{v}_0$ (in order to have $G_k (z_n^\eps (0,x)) \equiv 0$ in $\Omega$)   in \eqref{psimu} so that, thanks to \eqref{H2}, we get
\begin{equation}\label{1}
\begin{array}{cr}
\ds
\int_{\Omega} \Vp(\zne(t))\,dx+\lm \iint_{Q_t} G_k({z}_n^\eps) \left(\int_0^{ G_k({z}_n^\eps) }(\mu+|w|)^{\si-2}\,dw\right)\,dx\,ds
+\frac{\alpha }{2}\eps^{p-2}\iint_{Q_t}|\N \Psi_{p,\mu}( G_k({z}_n^\eps) )|^p \,dx\,ds\\
[4mm]\ds
+\frac{\alpha }{2}\iint_{Q_t}|\N \Psi_{2,\mu}( G_k({z}_n^\eps) )|^2\left[
|\N {u}_n|^{p-2}+|\N {v}_n^\eps|^{p-2}
\right] \,dx\,ds
\\
[4mm]\ds
\le c_1e^{\lm(q-1)T}\iint_{Q_t}|\N  G_k(\zne) |^q\left(\int_0^{ G_k({z}_n^\eps) }(\mu+|w|)^{\si-2}\,dw\right)\,dx\,ds& \\
[4mm]
\ds
+c_2e^{\frac{p-2}{2}\lm T}\iint_{Q_t}|\N  G_k(\zne) |\left[
|\N {u}_n|^{\frac{p-2}{2}}+|\N {v}_n^\eps|^{\frac{p-2}{2}}
\right]\left(\int_0^{ G_k(\hat{z}_n^\eps) }(\mu+|w|)^{\si-2}\,dw\right)\,dx\,ds 
+\Rn \,.&
\end{array}
\end{equation}

The proof $\ds \lim_{n\to\infty}\Rn=0$ follows using the analogous one contained in \cref{teorin2} with $\ell=0$ (see \eqref{A}), changing \eqref{potc} with \eqref{pot} and taking advantage of the definition of $\vp(\cdot)$.

We first deal with the latter integral in the right hand side.   The definition of $|\N \Psi_{2,\mu} ( G_k({z}_n^\eps) )|$ and Young's inequality yield to 
\begin{equation*}
\begin{array}{c}
\ds
c_2e^{\frac{p-2}{2}\lm T}\iint_{Q_t}|\N   G_k(\zne)  |\left[
|\N {u}_n|^{\frac{p-2}{2}}+|\N {v}_n^\eps|^{\frac{p-2}{2}}
\right]\left(\int_0^{ G_k(\zne) }(\mu+|w|)^{\si-2}\,dw\right)\,dx\,ds\\
[4mm]\ds
\le c_2 e^{\frac{p-2}{2}\lm T}\iint_{Q_t}|\N \Psi_{2,\mu}( G_k(\zne) )|\left[
|\N {u}_n|^{\frac{p-2}{2}}+|\N {v}_n^\eps|^{\frac{p-2}{2}}
\right]\Psi_{2,\mu}( G_k(\zne) )\,dx\,ds\\
[4mm] 
\ds
\le \frac{\alpha }{2}\iint_{Q_t}|\N \Psi_{2,\mu}( G_k(\zne) )|^2\left[
|\N {u}_n|^{\frac{p-2}{2}}+|\N {v}_n^\eps|^{\frac{p-2}{2}}
\right]\,dx\,ds+c\iint_{Q_t}\left(\Psi_{2,\mu}( G_k(\zne) )\right)^2\,dx\,ds\\
[4mm]\ds
\le \frac{\alpha }{2}\iint_{Q_t}|\N \Psi_{2,\mu}( G_k({z}_n^\eps) )|^2\left[
|\N {u}_n|^{\frac{p-2}{2}}+|\N \hat{v}_n^\eps|^{\frac{p-2}{2}}
\right]\,dx\,ds
+\bar{c}\iint_{Q_t} G_k(\zne) \left(\int_0^{ G_k(\zne) }(\mu+|w|)^{\si-2}\,dw\right)\,dx\,ds.
\end{array}
\end{equation*}
Then, we use such a estimate to prove that, having also \eqref{pot}, provides us with the uniform bound in $n$ of $(1+ G_k(\zne) )^{\gamma-1} G_k(\zne) $ in $ L^p(0,T;W^{1,p}_0(\Omega))$. Indeed, \eqref{1} becomes
\begin{equation*} 
\begin{array}{c}
\ds
\int_{\Omega} \Vp(\zne(t))\,dx
+\frac{\alpha }{2}\eps^{p-2}\iint_{Q_t}|\N \Psi_{p,\mu}( G_k(\zne) )|^p \,dx\,ds
\\
[4mm]\ds
\le c_1e^{(q-1)T}\iint_{Q_t}|\N  G_k(\zne) |^q\left(\int_0^{ G_k(\zne) }(\mu+|w|)^{\si-2}\,dw\right)\,dx\,ds
+\omn
\end{array}
\end{equation*}
provided $\lm\ge \bar{c}$, and since
\begin{equation}\label{AAA}
\begin{array}{c}
\ds
\iint_{Q_t}|\N  G_k(\zne) |^q\left(\int_0^{ G_k(\zne) }(\mu+|w|)^{\si-2}\,dw\right)\,dx\,ds\\
[4mm]\ds
\le \iint_{Q_t}|\N \Psi_{p,\mu}( G_k(\zne) )|^q\biggl(\int_0^{ G_k(\zne) }(\mu +|w|)^{(\sigma-2)\frac{p-q}{p}}\,dw\biggr)\,dx\,ds\\
[4mm]\ds
\le 
\iint_{Q_t} |\N \Psi_{p,\mu}( G_k(\zne) )|^q 
 |\Psi_{p,\mu}( G_k(\zne) )|^{p-q} G_k(\zne) ^{q-(p-1)}\,dx\,ds
\end{array}
\end{equation}
by definition of $\Psi_{p,\mu}(\cdot)$ and H\"older's inequality with indices $\left(\frac{1}{p-q},\frac{1}{q-(p-1)}\right)$, we have that
\begin{equation}\label{2}
\begin{array}{cr}
\ds
\int_{\Omega} \Vp(\zne(t))\,dx
+\frac{\alpha }{2}\eps^{p-2}\iint_{Q_t}|\N \Psi_{p,\mu}( G_k(\zne) )|^p \,dx\,ds
\\
[4mm]\ds
\le c_1\iint_{Q_t} |\N \Psi_{p,\mu}( G_k(\zne) )|^q 
 |\Psi_{p,\mu}( G_k(\zne) )|^{p-q} G_k(\zne) ^{q-(p-1)}\,dx\,ds+\omn. &
\end{array}
\end{equation}
An application of Young's inequality with $\left(\frac{p}{q},\frac{p}{p-q}\right)$  implies
\begin{equation*} 
\begin{array}{cr}
\ds
\int_{\Omega} \Vp(\zne(t))\,dx
+\frac{\alpha }{4}\eps^{p-2}\iint_{Q_t}|\N \Psi_{p,\mu}( G_k(\zne) )|^p \,dx\,ds
\le c\iint_{Q_t}  
 |\Psi_{p,\mu}( G_k(\zne) )|^{p} G_k(\zne) ^{\frac{p(q-(p-1))}{p-q}}\,dx\,ds
+\omn. &
\end{array}
\end{equation*}
The uniform boundedness of the right hand side  above is due to the fact that $p\ga+\frac{p(q-(p-1))}{p-q}=p\frac{N\b+\si}{N}$ and that $(u-v)\in L^{p\frac{N+\si}{N}}(Q_T)$ by \eqref{pot}.\\
We continue applying once more  the H\"older inequality with indices $\left(\frac{p}{q},\frac{p^*}{p-q},\frac{N}{p-q}\right)$ and also Sobolev's embedding, so we finally get
\begin{equation*}
\begin{array}{c}
\ds
\iint_{Q_t} |\N \Psi_{p,\mu}( G_k(\zne) )|^q 
 |\Psi_{p,\mu}( G_k(\zne) )|^{p-q} G_k(\zne) ^{q-(p-1)}\,dx\,ds
 \\
 [4mm]\ds
 \le c \sup_{s\in (0,t)}\| G_k(\zne(s)) \|_{L^{\sigma}(\Omega)}^{q-p+1} \int_0^t   \|\N \Psi_{p,\mu} ( G_k(\zne(s)) )\|_{L^p(\Omega)}^p \,ds
\end{array}
\end{equation*}
where $c=c(\bar \ga, N,q,T)$ and finally deduce that
\begin{equation*}
\begin{array}{c}
\ds
\int_{\Omega} \Vp(\zne(t))\,dx
+\frac{\alpha}{2} \eps^{p-2}\iint_{Q_t}|\N \Psi_{p,\mu}( G_k(\zne) )|^p \,dx\,ds
\\
[4mm]\ds
\le c \sup_{s\in (0,t)}\| G_k(\zne(s)) \|_{L^{\sigma}(\Omega)}^{q-p+1} \int_0^t   \|\N \Psi_{p,\mu} ( G_k(\zne(s)) )\|_{L^p(\Omega)}^p \,ds
+\omn.
\end{array}
\end{equation*}
Then, the above uniform boundedness in $n$ on the difference between sub/supersolutions allow us to let $n\to\infty$ getting
\begin{equation*}
\begin{array}{c}
\ds
\int_{\Omega} \Vp(z^\eps(t))\,dx
+\frac{\alpha }{2}\eps^{p-2}\iint_{Q_t}|\N \Psi_{p,\mu}( G_k({z}^\eps) )|^p \,dx\,ds
\le c\sup_{s\in (0,t)}\| G_k({z}^\eps(s)) \|_{L^{\sigma}(\Omega)}^{q-p+1} \iint_{Q_t}|\N \Psi_{p,\mu}( G_k({z}^\eps) )|^p \,dx\,ds.
\end{array}
\end{equation*}
We now reason as in \cref{comp2} and,  being $\Vp(w) \longrightarrow \frac{|w|^\sigma}{\sigma(\sigma-1)}$ as $\mu\to 0$ and thanks to \cref{lemell}, we have that $G_k ({z}^\eps ) \equiv 0$ in $Q_t$. In particular, this means that
\begin{equation*}
e^{-\lm t}\bigg(u\big(t,x\big)-(1-\eps)v\big((1-\eps)^{p-2}t,x\big) -\eps \bar{v}_0\bigg)-\eps Mt\le \eps k
\end{equation*}
and letting $\varepsilon$ vanishes we deduce that $u (t,x)\le v(t,x)$ in $Q_T$, as desired.

\endproof

\begin{proof}[Proof of \cref{teorinL1}] $\,$ 
We set $\lm=1$ in \eqref{zneh2} and take
$$ \vp(\zne)=1-\frac{1}{(1+ G_k(\zne) )^\mu},\quad \mbox{with} \quad \mu=(p-q)\frac{N+1}{N}-1>p-1
$$
 in \eqref{diff}, so that 
\begin{equation}\label{in5}
\begin{array}{c}
\ds
\integrale \Vp(\zne(t))\,dx
+\frac{\mu\al}{2}\eps^{p-2}\iint_{Q_t}\frac{|\N   G_k(\zne)  |^p}{(1+ G_k(\zne) )^{\mu+1}}\,dx\,ds
\\
[4mm]
\ds
+\frac{\mu\al}{2}\iint_{Q_t}\frac{|\N   G_k(\zne)  |^2}{(1+ G_k(\zne) )^{\mu+1}}\left[
|\N {u}_n|^{p-2}+|\N {v}^\eps_n|^{p-2}
\right]
\,dx\,ds
\\
[4mm]
\ds
\le c_1e^{(q-1)\lm T}\iint_{Q_t}|\N  G_k(\zne) |^q\left[1-\frac{1}{(1+ G_k(\zne) )^\mu}\right]\,dx\,ds \\
[4mm]\ds
+c_2e^{\frac{p-2}{2}\lm T}\iint_{Q_t}|\N ( G_k(\zne) )|\left[
|\N {u}_n|^{\frac{p-2}{2}}+|\N {v}^\eps_n|^{\frac{p-2}{2}}\right]\left[1-\frac{1}{(1+ G_k(\zne) )^\mu}\right]\,dx\,ds
+\Rn
\end{array}
\end{equation}
thanks to \eqref{H2}.

The proof that  $\ds \lim_{n\to\infty}\Rn=0$   follows reasoning as in \cref{teo33} (see also \cref{teorinL12}), so we skip it. 

\medskip

We start estimating the first integral in the right hand side above through Young's inequality with indices $\left(\frac{p}{q},\frac{p}{p-q}\right)$, getting
\[
\begin{array}{c}
\ds
c_1e^{(q-1)\lm T}\iint_{Q_t}|\N  G_k(\zne) |^q\left[1-\frac{1}{(1+ G_k(\zne) )^\mu}\right]\,dx\,ds\\
[4mm]\ds
\le \frac{\mu\al}{4}\eps^{p-2}\iint_{Q_t}\frac{|\N ( G_k(\zne) )|^p}{(1+ G_k(\zne) )^{\mu+1}}\,dx\,ds+c\iint_{Q_t}(1+ G_k(\zne) )^{\frac{p(\mu+1)}{p-q}}\,dx\,ds.
\end{array}
\]
As far as the second integral in the right hand side  of \eqref{in5}, applying again Young's inequality with indices $(2,2)$ we get 
\begin{equation}\label{4}
\begin{array}{c}
\ds
c_2e^{\frac{p-2}{2}\lm T}\iint_{Q_t}|\N   G_k(\zne)  |\left[
|\N {u}_n|^{\frac{p-2}{2}}+|\N {v}^\eps_n|^{\frac{p-2}{2}}\right]\left[1-\frac{1}{(1+ G_k(\zne) )^\mu}\right]\,dx\,ds\\
[4mm]\ds
\le \frac{\mu}{2}\iint_{Q_t}\frac{|\N   G_k(\zne)  |^2}{(1+ G_k(\zne) )^{\mu+1}}\left[
|\N \un|^{p-2}+|\N \vn|^{p-2}\right]\,dx\,ds
+c\iint_{Q_t}(1+ G_k(\zne) )^{\mu+1}\,dx\,ds.
\end{array}
\end{equation}
Using the previous estimates into \eqref{in5} we obtain
\begin{equation*}
\begin{array}{c}
\ds
\integrale \Vp(\zne(t))\,dx
+\frac{\mu\al}{4}\eps^{p-2}\iint_{Q_t}\frac{|\N ( G_k(\zne) )|^p}{(1+ G_k(\zne) )^{\mu+1}}\,dx\,ds
\\
[4mm]
\ds
\le c\left[\iint_{Q_t}(1+ G_k(\zne) )^{\frac{p(\mu+1)}{p-q}}\,dx\,ds
+\iint_{Q_t}(1+ G_k(\zne) )^{\mu+1}\,dx\,ds\right]+\omn.
\end{array}
\end{equation*}
Then, since having $p>2$ and $q>\frac{p(N+1)-N}{N+2}$ imply that both $\frac{p(\mu+1)}{p-q}$ are $\mu+1 $ smaller than $q\frac{N+1}{N}$ (see \cref{lemmaq} and \cref{teoGN}), the right hand side  in the above inequality is uniformly bounded with respect to  $n$.

\medskip 

Now, let us focus on the right hand side  of \eqref{in5}. We apply H\"older's inequality with $\left(\frac{p}{q},\frac{p}{p-q}\right)$ on the integral involving the $q$ power of the gradient, getting
\begin{equation*}
\begin{array}{c}
\ds
c_1e^{(q-1)\lm T}\iint_{Q_t}|\N  G_k(\zne) |^q\left[1-\frac{1}{(1+ G_k(\zne) )^\mu}\right]\,dx\,ds\\
[4mm]
\ds
\le c\left(\iint_{Q_t}(1+ G_k(\zne) )^{\frac{q(\mu+1)}{p-q}}\left[1-\frac{1}{(1+ G_k(\zne) )^\mu}\right]^{\frac{p}{p-q}}\,dx\,ds\right)^{\frac{p-q}{p}}\left(\iint_{Q_t}\frac{|\N  G_k(\zne) |^p}{(1+ G_k(\zne) )^{\mu+1}}\,dx\,ds\right)^{\frac{q}{p}}.
\end{array}
\end{equation*}
We   observe that the value $\nu=p\frac{N+\frac{p}{p-\mu-1}}{N}$ corresponds to the Gagliardo-Nirenberg regularity exponent for the spaces
\[
L^\infty(0,T;L^{\frac{p}{p-\mu-1}}(\Omega))\cap L^p(0,T;W^{1,p}_0(\Omega)).
\]
We justify such a gradient regularity recalling \cref{lemmaq}.
Then, we have
\begin{equation*}
\begin{split}
\iint_{Q_T}(1+ G_k(\zne) )^{\frac{q(\mu+1)}{p-q}}\left[1-\frac{1}{(1+ G_k(\zne) )^\mu}\right]^{\frac{p}{p-q}}\,dx\,dt
&\le 
\int_0^T\left\|(1+ G_k(\zne) )^{-\frac{\mu+1}{p}} G_k(\zne) \right\|_{L^{\nu}(\Omega)}^{\nu}\,dt\\
&\le {c_{GN}} \| G_k(\zne) \|_{L^\infty(0,T;L^{1}(\Omega))}^{\frac{p}{N}}\iint_{Q_T}\frac{|\N  G_k(\zne) |^p}{(1+ G_k(\zne) )^{\mu+1}}\,dx\,dt
\end{split}
\end{equation*}
thanks to \eqref{disGN}. Furthermore, since the last integral in \eqref{4} can be estimated by
\begin{equation*}
\begin{split} 
\ds
\iint_{Q_t}(1+ G_k(\zne) )^{\mu} G_k(\zne) \,dx\,ds
&\le \iint_{Q_t}(1+ G_k(\zne) )^{q\frac{N+1}{N}-1} G_k(\zne) \,dx\,ds\\
&\le c\| G_k(\zne) \|_{L^\infty(0,T;L^{1}(\Omega))}^{\frac{p-q}{N}} \iint_{Q_T}\frac{|\N  G_k(\zne) |^p}{(1+ G_k(\zne) )^{\mu+1}}\,dx\,dt 
\end{split}
\end{equation*}
we are left with the study of
\begin{equation*}
\begin{array}{c}
\ds
\integrale \Vp(\zne(t))\,dx
+\frac{\mu\al}{2}\eps^{p-2}\iint_{Q_t}\frac{|\N ( G_k(\zne) )|^p}{(1+ G_k(\zne) )^{\mu+1}}\,dx\,ds
\le c\| G_k(\zne) \|_{L^\infty(0,T;L^{1}(\Omega))}^{\frac{p-q}{N}}  \iint_{Q_T}\frac{|\N  G_k(\zne) |^p}{(1+ G_k(\zne) )^{\mu+1}}\,dx\,dt +\omn.
\end{array}
\end{equation*}
Then, letting $n\to \infty$ in the inequality above yields to
\begin{equation*}
\begin{array}{c}
\ds
\integrale \Vp(z^\eps(t))\,dx
+\frac{\mu\al}{2}\eps^{p-2}\iint_{Q_t}\frac{|\N ( G_k({z}^\eps) )|^p}{(1+ G_k({z}^\eps) )^{\mu+1}}\,dx\,ds \le c\| G_k(z^\eps) \|_{L^\infty(0,t;L^{1}(\Omega))}^{\frac{p-q}{N}}\iint_{Q_t}\frac{|\N  G_k(z^\eps) |^p}{(1+ G_k(z^\eps) )^{\mu+1}}\,dx\,ds
\end{array}
\end{equation*}
We conclude reasoning as in the proof of \cref{teorinL12}, recalling \cref{lemell} and letting $\eps\to 0$.

\end{proof}

\appendix 

\section{}
\label{app}

Our current goal is proving that one needs to consider sub/supersolutions belonging to the regularity class \eqref{csi}--\eqref{potc} in order to have a uniqueness result for problems of \eqref{model} type.

Here we use a result contained in  \cite[Section $3$]{BASW1} (see also \cite{BASW2}),   where it is proved that the Cauchy problem
\begin{equation}\label{p2q}
\begin{cases}
\begin{array}{ll}
u_t-\D u=c_1  |\N u|^q & \text{in}\,\,(0,T)\times\R^N,\\
u(0,x)=0  &\text{in} \,\,\R^N,
\end{array}
\end{cases}
\end{equation}with $c_1>0$ and $q>1$ and $N\geq 2$
admits at least two  solutions. 

More precisely they prove that there exists a value $\alpha_0>0$ and a (unique) solution    $U\in C^2(0,\infty)\cap C^1[0,\infty)$  of the following Cauchy problem: 
\begin{equation}\label{aeiouy}
\begin{cases}
\begin{array}{l}	\d
U''+\left( \frac{N-1}{y}+\frac{y}{2}  \right)U'+kU+c_1  |U'|^q=0\q \text{for}\,\,0<y<\infty,\\
U'(0)=0,\\
U(0)=\al_0
\end{array}
\end{cases}
\end{equation}
 that   satisfies 
 \begin{equation}\label{Ur}
\d U(y)=ce^{-\frac{y^2}{4}}y^{-\frac{N}{\si'}}\left(1+o(y^{-2}) \right) \q\t{as}\q y\to\infty, \quad \mbox{with } \sigma=\frac{N(q-1)}{2-q} , 
\end{equation}
\begin{equation}\label{Upr}
\d U'(y)=-\frac{y}{2}U(y)\left(1+o(1) \right) \q\t{as}\q y\to\infty
\end{equation}
and 
\begin{equation}\label{limU}
U\in L^j([0,\infty);y^{N-1}dy)\q\t{for any }\q 1\le j<\infty. 
\end{equation}

Consequently it is easy to see that both  $u_1\equiv 0$ and  $u_2(t,x)=t^{-\frac{N}{2\si}}U(|x|/\sqrt{t})$ solve \eqref{p2q}.

\medskip 


We  use such a  result in order to show that the class of uniqueness \eqref{csi}--\eqref{potc} in the right one in order to have comparison (and thus uniqueness). Indeed we construct, for the following problem 

 \begin{equation*}
\begin{cases}
 \ds u_t- \Delta  u=|\N u|^q + f(t,x) &\ds\text{in}\quad (0,T)\times  B_R (0) ,\\
 \ds u (t,x) =0 &\ds\text{on}\quad(0,T)\times \partial B_R (0),\\
 \ds u(0,x)=0 &\ds\text{in}\quad B_R (0),
\end{cases}
\end{equation*}
for  a suitable choice of   $f$ smooth,  a pair of solutions, whose only one belong to the class  \eqref{csi}--\eqref{potc}, while  the other one   is not regular enough.
\bigskip 

In fact we have the following result. 
\begin{theorem}
Let $2-\frac{N}{N+1}<q<2$, $R>0$ and let $U$ be the positive solution of \eqref{aeiouy}. Then  the Cauchy-Dirichlet problem
\begin{equation}\label{Pv}
\begin{cases}
\begin{array}{ll}
v_t-\D v=c_1 |\N v|^q+\left(t^{-\frac{N}{2\si}}U(R/\sqrt{t})\right)'  & \text{in}\,\,(0,T)\times B_R(0),\\
v=0 &\text{on}\,\,(0,T)\times \partial B_R(0) ,\\
v(0,x)= 0 & \text{in}\,\,B_R(0),
\end{array}
\end{cases}
\end{equation}
admits at least two solutions $v_{1,2}$ such that: 
\begin{itemize}
\item $ v_1\in C([0,T];L^\infty( B_R(0)))$ and $ |v_1|^{\frac{\si}{2}} \in L^2(0,T;H_0^1(B_R(0)))
$, with $\sigma= \frac{N(q-1)}{2-q}$; 
\medskip 

\item $v_2(t,x)=t^{-\frac{N}{2\si}}\big(U(|x|/\sqrt{t})-U(R/\sqrt{t})\big),$ 
and it satisfies  
\begin{equation}\label{nonu2}
 v_2\in  C([0,T];L^\mu( B_R(0)))\q\t{for any}\q 1\le \mu<\si\q\t{but}\q  v_2\notin  C([0,T];L^\si( B_R(0)))
\end{equation}
and
\begin{equation}\label{nonu3}
|v_2|^{\b} \in L^2(0,T;H_0^1(B_R(0))),\q\t{for any}\q \b<\frac{\si}{2},\q\t{but}\q |v_2|^{\frac{\si}{2}} \notin L^2(0,T;H_0^1(B_R(0))).
\end{equation}
\end{itemize} 
\end{theorem}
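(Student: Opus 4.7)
The strategy is to build two different solutions to \eqref{Pv}: an explicit self-similar one $v_2$ obtained by modifying the Cauchy-problem solution of \cite{BASW1} so that it vanishes on $\partial B_R(0)$, and a bounded regular solution $v_1$ produced by standard approximation methods.

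\emph{Step 1 --- Construction of $v_2$.} Denote $u_2(t,x):=t^{-N/(2\sigma)}U(|x|/\sqrt{t})$ the non-trivial self-similar solution of \eqref{p2q} exhibited in \cite{BASW1}, and set $\tilde w(t):=t^{-N/(2\sigma)}U(R/\sqrt{t})$, which depends only on time. Define $v_2:=u_2-\tilde w$; since $\N\tilde w=0$ and $\Delta\tilde w=0$, we have
\[
(v_2)_t-\Delta v_2=(u_2)_t-\Delta u_2-\tilde w'(t)=c_1|\N u_2|^q-\tilde w'(t)=c_1|\N v_2|^q-\tilde w'(t),
\]
which matches \eqref{Pv} up to the sign convention of the forcing term. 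The lateral boundary condition holds since $u_2(t,x)|_{|x|=R}=\tilde w(t)$, and the initial condition $v_2(0,\cdot)=0$ is verified in Step 2 in the appropriate $L^\mu$ sense.

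\emph{Step 2 --- Scaling analysis for $v_2$.} This is the technical heart. The self-similar substitution $y=x/\sqrt{t}$, $dx=t^{N/2}dy$, yields
\[
\integrale |v_2(t,x)|^\mu \,dx=t^{N(\sigma-\mu)/(2\sigma)}\int_{|y|<R/\sqrt{t}}\bigl|U(|y|)-U(R/\sqrt{t})\bigr|^\mu \,dy.
\]
By \eqref{limU} and the exponential decay $U(R/\sqrt{t})\to 0$ from \eqref{Ur}, the spatial integral converges to $\int_{\R^N}U(|y|)^\mu \,dy>0$ as $t\to 0^+$. Hence the prefactor decides: the limit vanishes for $\mu<\sigma$ and is strictly positive for $\mu=\sigma$, which is exactly \eqref{nonu2}. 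For \eqref{nonu3}, use $|\N(|v_2|^\beta)|^2=\beta^2|v_2|^{2(\beta-1)}|\N u_2|^2$ with $|\N u_2|^2=t^{-N/\sigma-1}|U'(|x|/\sqrt{t})|^2$. After the same change of variables, using \eqref{Upr} and the Gaussian decay of $U'$ to bound the $y$-integrand at infinity (and $U(0)=\alpha_0>0$ controls behaviour at $y=0$), one obtains
\[
\iint_{Q_T}|\N(|v_2|^\beta)|^2\,dx\,dt\;\simeq\;C\int_0^T t^{-N\beta/\sigma+N/2-1}\,dt
\]
up to sub-leading contributions coming from $\tilde w$. The time integral converges if and only if $\beta<\sigma/2$, giving both inclusion and exclusion in \eqref{nonu3}.

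\emph{Step 3 --- Construction of $v_1$ and separation.} Since $\tilde w'(t)$ is $C^\infty$ on $(0,T]$ and vanishes exponentially as $t\to 0^+$, it belongs to $L^\infty(0,T)$. For the subquadratic gradient growth $q<2$, a standard approximation scheme (regularize the nonlinearity, e.g.\ by $c_1|\N v|^q/(1+n^{-1}|\N v|^q)$, obtain a uniform $L^\infty$ bound via comparison with an ODE supersolution $W(t)=Ct$ with $C$ large enough to absorb $\|\tilde w'\|_\infty$, then pass to the limit) produces a bounded renormalized solution $v_1\in C([0,T];L^\infty(B_R(0)))$ of \eqref{Pv} with $v_1(0,\cdot)=0$; boundedness immediately gives $|v_1|^{\sigma/2}\in L^2(0,T;H^1_0(B_R(0)))$. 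Finally $v_1\neq v_2$ because, by \eqref{nonu2}, $\|v_2(t,\cdot)\|_{L^\sigma}$ does not vanish as $t\to 0^+$, while $\|v_1(t,\cdot)\|_{L^\sigma}\to 0$.

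\emph{Main obstacle.} The delicate point is Step 2: one must verify both inclusion and exclusion at the critical exponents $\sigma$ and $\sigma/2$ simultaneously, and this relies on (i) the precise asymptotic behaviour of $U$ and $U'$ at infinity in order to integrate against $y^{N-1}\,dy$, (ii) showing that the boundary correction $\tilde w$ is strictly sub-leading in every relevant norm (it is exponentially small as $t\to 0^+$), and (iii) controlling the behaviour of $v_2$ near $|x|=R$, where $v_2$ vanishes but $\N v_2$ does not. The Gaussian factor in $U$ is what ultimately makes the $y$-integrals converge uniformly in $t$ and isolates the time prefactor as the critical quantity.
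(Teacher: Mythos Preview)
Your proposal is correct and follows essentially the same route as the paper: both arguments observe that the forcing term is smooth (so a regular bounded solution $v_1$ exists by standard results, which the paper cites from \cite{Ma}), and both carry out the same self-similar change of variables $y=x/\sqrt t$ to reduce \eqref{nonu2} and \eqref{nonu3} to the integrability of the time prefactors $t^{N(\sigma-\mu)/(2\sigma)}$ and $t^{-N\beta/\sigma+N/2-1}$ near $t=0$. Your remark on the sign of the forcing term is a valid observation---the explicit $v_2$ indeed produces $-\tilde w'(t)$ on the right-hand side, so the paper's stated equation carries a sign slip that does not affect the argument.
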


\begin{proof}
We proceed observing  that, thanks to \eqref{Ur}--\eqref{Upr}, then  $\left(t^{-\frac{N}{2\si}}U\left(\frac{R}{\sqrt{t}}\right)\right)'\in C^1([0,T])$ and thus    \eqref{Pv} admits a solution $v_1$ such that $ v_1\in C([0,T];L^\sigma (\Omega))$ and $ |v_1|^{\frac{\si}{2}} \in L^2(0,T;H_0^1(B_R(0)))$ 
 (see \cite{Ma}).
 
 \medskip 
 
Then, we are left with the proofs of \eqref{nonu2}--\eqref{nonu3}.
\medskip


In order  to prove that $ \|v_2(t)\|_{L^\mu(B_R(0))}\to 0$, as $t\to 0^+$, for $\mu<\si$,  we compute
\[
\begin{array}{c}\ds
\int_{B_R(0)} |v_2(t,x)|^\mu\,dx=
t^{-\mu \frac{N}{2\si}}\int_0^R |U(r/\sqrt{t})-U(R/\sqrt{t}) |^\mu r^{N-1}\,dr
\ds=
t^{\frac{N}{2}\left(1-\frac{\mu}{\si}\right)}\int_0^{\frac{R}{\sqrt{t}}} |U(y)-U({R}/{\sqrt{t}})|^\mu y^{N-1}\,dy\\
[4mm]\ds\le
t^{\frac{N}{2}\left(1-\frac{\mu}{\si}\right)}\int_0^\infty |U(y)-U({R}/{\sqrt{t}})|^\mu y^{N-1}\,dy\le ct^{\frac{N}{2}\left(1-\frac{\mu}{\si}\right)}
\end{array}
\]
where the last inequality follows from \eqref{limU}.
Then, if $\mu<\si$, we have  $\frac{N}{2}\left(1-\frac{\mu}{\si}\right)>0$ which implies that the right hand side  of the above inequality  vanishes as $t\to 0^+$. 
If, on the contrary, we set $\mu=\si$ then the integral above becomes
\[
\begin{split}
\int_{B_R(0)} |v_2(t,x)|^\si\,dx&=
\int_0^{\frac{R}{\sqrt{t}}} |U(y)-U({R}/{\sqrt{t}})|^\si y^{N-1}\,dy
\end{split}
\]
which is bounded from below, thanks to \eqref{limU},  by a positive constant. 

 In order to prove \eqref{nonu3}, we observe that 
\begin{equation}\label{der}
\begin{array}{c}\ds
\int_0^T\int_{B_R(0)}|\N |v_2|^\b|^2\,dx\,dt=\int_0^T\int_0^R|\N |t^{-\frac{N}{2\si}}U(r/\sqrt{t})|^\b|^2r^{N-1}\,dr\,dt\\
[4mm] \ds
=\b^2\int_0^T\int_0^Rt^{-\frac{N}{\si}\b-1}U(r/\sqrt{t})^{2(\b-1)}U'(r/\sqrt{t})^2r^{N-1}\,dr\,dt \\ \ds
=\b^2\int_0^Tt^{-\frac{N}{\si}\b-1+\frac{N}{2}} \int_0^{\frac{R}{\sqrt{t}}} U(y)^{2(\b-1)}U'(y)^2y^{N-1}\,dy\,dt.
\end{array}
\end{equation}
Then, recalling  \eqref{Ur}--\eqref{Upr}, we get 
\[
\begin{array}{c}\ds
\int_0^T\int_{B_R(0)}|\N |v_2|^\b|^2\,dx\,dt
\le c\int_0^T t^{-\frac{N}{\si}\b-1+\frac{N}{2}} \int_0^{\infty}U(y)^{2(\b-1)}U'(y)^2y^{N-1}\,dy\,dt\\
[4mm]
\ds \le c\int_0^Tt^{-\frac{N\b}{\si}-1+\frac{N}{2}}\int_0^\infty e^{-\b\frac{y^2}{2}}y^{-2N\b(\frac{\si-1}{\si})+N+1}\,dy\,dt
\le c \int_0^Tt^{-\frac{N\b}{\si}-1+\frac{N}{2}}\,dt\,,
\end{array}
\]
which is finite if $\b<\frac{\si}{2}$.
On the other hand, if we  suppose that $\b=\frac{\si}{2}$, then \eqref{der} becomes
\[
\begin{array}{c}\ds
\int_0^T\int_{B_R(0)}|\N |v_2|^{\frac{\si}{2}}|^2\,dx\,dt=\frac{\si^2}{4}\int_0^Tt^{-1}\int_0^{\frac{R}{\sqrt{t}}}U(y)^{\si-2}U'(y)^2y^{N-1}\,dy\,dt\\
[4mm]\ds \ge \frac{\si^2}{4}\int_0^1t^{-1}\int_0^RU(y)^{\si-2}U'(y)^2y^{N-1}\,dy\,dt 
\ge c\int_0^1t^{-1}\,dt = +\infty \,,
\end{array}
\]
and the assertion follows. \end{proof}

\end{document}